\numberwithin{equation}{section}
\newtheorem{thm}{Theorem}[section]
\newtheorem{lem}{Lemma}[section]
\newtheorem{prop}{Proposition}[section]
\theoremstyle{definition}
\newtheorem{defn}{Definition}[section]
\newtheorem{rem}{Remark}[section]
\numberwithin{equation}{section}
\def \Vh0{\stackrel{\circ}{V}_h}
\def\brho{{\boldsymbol \rho }}
\newcommand{\lc}
{\mathrel{\raise2pt\hbox{${\mathop<\limits_{\raise1pt\hbox
{\mbox{$\sim$}}}}$}}}
\newcommand{\gc}
{\mathrel{\raise2pt\hbox{${\mathop>\limits_{\raise1pt\hbox{\mbox{$\sim$}}}}$}}}
\newcommand{\ec}
{\mathrel{\raise2pt\hbox{${\mathop=\limits_{\raise1pt\hbox{\mbox{$\sim$}}}}$}}}
\def\bb{\begin{equation}} \def\ee{\end{equation}}
\def\beqn{\begin{eqnarray}}  \def\eqn{\end{eqnarray}}
\def\beqnx{\begin{eqnarray*}} \def\eqnx{\end{eqnarray*}}
\def\bn{\begin{enumerate}} \def\en{\end{enumerate}}
\def\bd{\begin{description}} \def\ed{\end{description}}
\newcommand{\bfx}{\mathbf{x}}
\def\bsi{{\mathrm{i}}}
\def\Oh{{\mathcal  O}}
\def\bsi{{\mathrm  i}}
\def \rmd {{\mathrm  d}}
 \allowdisplaybreaks \allowdisplaybreaks[4]
\title[Spectral properties of the acoustoelastic transmission eigenvalue problem]{On a coupled-physics transmission eigenvalue problem and its spectral properties with applications }
\author{Huaian Diao}
\address{School of Mathematics and Key Laboratory of Symbolic Computation and Knowledge Engineering of Ministry of Education, Jilin University,
Changchun, Jilin 130012, China.}
\email{diao@jlu.edu.cn}
\author{Hongyu Liu}
\address{Department of Mathematics, City University of Hong Kong, Kowloon, Hong Kong SAR, China.}
\email{hongyu.liuip@gmail.com; hongyliu@cityu.edu.hk}
\author{Qingle Meng}
\address{Department of Mathematics, City University of Hong Kong, Kowloon, Hong Kong SAR, China.}
\email{mengq12021@foxmail.com; qinmeng@cityu.edu.hk}
\author{Li Wang}
\address{Department of Mathematics, City University of Hong Kong, Kowloon, Hong Kong SAR, China.}
\email{liwangmath12@126.com; lwang637-c@my.cityu.edu.hk}
\begin{document}
\maketitle

\begin{abstract}
In this paper, we investigate a transmission eigenvalue problem that couples the principles of acoustics and elasticity. This problem naturally arises when studying fluid-solid interactions and constructing bubbly-elastic structures to create metamaterials. We uncover intriguing local geometric structures of the transmission eigenfunctions near the corners of the domains, under typical regularity conditions. As applications, we present novel unique identifiability and visibility results for an inverse problem associated with an acoustoelastic system, which hold practical significance.

\medskip

\noindent{\bf Keywords:}~~acoustoelastic;  transmission eigenvalue problem; corner singularity; spectral geometry; inverse inclusion problem; unique identifiability

\noindent{\bf 2010 Mathematics Subject Classification:}~~58J50; 35P25; 74F10; 35A02; 74B05.

\end{abstract}

\section{Introduction}

\subsection{Mathematical formulation}

Initially focusing on mathematics, but not physics, we present the formulation of the coupled-physics transmission eigenvalue problem for our study. Let $\Omega\Subset\mathbb{R}^N$, $N=2, 3$, be a bounded Lipschitz domain with a connected complement $\mathbb{R}^N\backslash\overline{\Omega}$. Let $\rho_e, \rho_b$ and $\kappa$ be positive real-valued $L^\infty(\Omega)$ functions that are bounded below, and $\lambda, \mu$ be $L^\infty(\Omega)$ functions satisfying
\begin{equation}\label{eq:cond1}
2\lambda+N\mu>0\quad\mbox{in}\ \ \Omega.
\end{equation}
Let $\gamma \in\mathbb{C}$ and consider the following eigenvalue problem for $\mathbf{u}\in H^1(\Omega)^N$ and $v\in H^1(\Omega)$:
\begin{equation}\label{eq:trans1}
\begin{cases}
\frac{1}{\rho_e}\mathcal{L}_{\lambda, \mu} \mathbf{u}+\gamma \mathbf{u}={\bf 0}\quad & \mbox{in}\ \Omega,\medskip\\
\kappa\nabla\cdot(\rho_b^{-1}\nabla v)+\gamma v=0\quad & \mbox{in}\ \Omega,\medskip\\
\mathbf{u}\cdot\nu-\frac{1}{\rho_b\gamma} \nabla v\cdot\nu=0\quad & \mbox{on}\ \Gamma,\medskip\\
T_{\nu}\mathbf{u}+v\nu={\bf 0}\quad & \mbox{on}\ \Gamma,
\end{cases}
\end{equation}
where $\nu\in\mathbb{S}^{N-1}:=\{\mathbf{x}\in\mathbb{R}^N; |\mathbf{x}|=1\}$ represents the exterior normal direction to $\partial \Omega$, $\Gamma\Subset\partial\Omega$,
\begin{equation}\label{eq:lame}
\mathcal{L}_{\lambda, \mu} \mathbf{u}:=\mu\Delta \mathbf{u}+(\lambda+\mu)\nabla\nabla\cdot \mathbf{u},
\end{equation}
and
\begin{equation}\label{eq:traction1}
T_\nu\mathbf{u}=\lambda(\nabla\cdot\mathbf{u})\nu+2\mu(\nabla^s\mathbf{u})\nu\quad\mbox{with}\ \ \nabla^s\mathbf{u}:=\frac 1 2(\nabla\mathbf{u}+\nabla\mathbf{u}^t).
\end{equation}
Here the superscript $``t"$ signifies the matrix transpose. It is clear that $\mathbf{u}\equiv\mathbf{0}$ and $v\equiv 0$ are a pair of trivial solutions to \eqref{eq:trans1}. If there exist nontrivial solutions to \eqref{eq:trans1}, $\gamma\in\mathbb{C}$ is called a transmission eigenvalue, and the corresponding nontrivial solutions $( v, \mathbf{u})$ are called the transmission eigenfunctions. If $\Gamma=\partial\Omega$, then \eqref{eq:trans1} is referred to as the full-data transmission eigenvalue problem, otherwise it is referred to as the partial-data transmission eigenvalue problem.

The first and second equations in \eqref{eq:trans1} are eigenvalue problems associated with elliptic partial differential operators (of different physical natures). However, one can verify (though not straightforwardly) that the transmission eigenvalue problem \eqref{eq:trans1} is non-elliptic and non-selfadjoint. In what follows, we shall be mainly concerned with real transmission eigenvalues $\gamma\in\mathbb{R}_+$, which are physically more related. Set $\omega:=\sqrt{\gamma}\in\mathbb{R}_+$. The transmission eigenvalue problem \eqref{eq:trans1} can then be rewritten as 
\begin{equation}\label{eq:trans2}
\begin{cases}
\mathcal{L}_{\lambda, \mu} \mathbf{u}+\omega^2\rho_e\mathbf{u}={\bf 0}\quad & \mbox{in}\ \Omega,\medskip\\
\nabla\cdot(\rho_b^{-1}\nabla v)+\omega^2\kappa^{-1} v=0\quad & \mbox{in}\ \Omega,\medskip\\
\mathbf{u}\cdot\nu-\frac{1}{\rho_b\omega^2} \nabla v\cdot\nu=0\quad & \mbox{on}\ \Gamma,\medskip\\
T_{\nu}\mathbf{u}+v\nu={\bf 0}\quad & \mbox{on}\ \Gamma.
\end{cases}
\end{equation}
In what follows, we also refer to $\omega$ as a transmission eigenvalue.

\subsection{Motivations and background}\label{sec:MB}

The transmission eigenvalue problem \eqref{eq:trans1} arises from the related studies of two coupled-physics processes that are of practical significance. Note that the first equation in \eqref{eq:trans1} corresponds to the Lam\'{e} system, which governs the deformation field $\mathbf{u}$ in an elastic medium. The parameters $\lambda$, $\mu$, and $\rho_e$ represent the bulk moduli and density of the elastic medium, respectively.
On the other hand, the second equation in \eqref{eq:trans1} corresponds to the Helmholtz equation, which governs the propagation of the acoustic wave field $v$ in an acoustic medium. The parameters $\rho_b$ and $\kappa$ denote the density and modulus of the acoustic medium, respectively.
Furthermore, the angular frequency of the two physical waves is denoted by $\omega\in\mathbb{R}_+$.

To motivate the study of the transmission eigenvalue problem \eqref{eq:trans1}, let us consider the first physical scenario where an acoustic wave is scattered by an elastic body immersed in a fluid. This problem is of central importance in detecting and identifying submerged objects and is generally referred to as the fluid-solid or wave-structure interaction problem \cite{Monk}. Let $\rho_b$ and $\kappa$ be positive constants that characterize a uniformly homogeneous background space, namely the acoustical property of the fluid. Let $v^i$ be an entire solution to
\begin{equation*}
\nabla\cdot(\rho_b^{-1}\nabla v^i)+\omega^2\kappa^{-1} v^i=0\quad \mbox{in}\ \ \mathbb{R}^N,
\end{equation*}
which signifies an impinging acoustic wave field. Let $(\Omega; \lambda, \mu, \rho_e)$ be an elastic inclusion as described above. The scattering of the impinging acoustic field $v^i$ by the elastic body $(\Omega; \lambda, \mu, \rho_e)$ is described by
\begin{equation}\label{eq:transae1}
\begin{cases}
\mathcal{L}_{\lambda, \mu} \mathbf{u}+\omega^2\rho_e\mathbf{u}={\bf 0}\quad & \mbox{in}\ \Omega,\medskip\\
\nabla\cdot(\rho_b^{-1}\nabla v)+\omega^2\kappa^{-1} v=0\quad & \mbox{in}\ \mathbb{R}^N\backslash\overline\Omega,\medskip\\
\mathbf{u}\cdot\nu-\frac{1}{\rho_b\omega^2} \nabla v\cdot\nu=0\quad & \mbox{on}\ \partial\Omega,\medskip\\
T_{\nu}\mathbf{u}+ v\nu={\bf 0}\quad & \mbox{on}\ \partial\Omega,\medskip\\
v=v^i+ v^s\quad &\mbox{in}\ \mathbb{R}^N\backslash\overline\Omega,\medskip\\
\lim\limits_{r\rightarrow \infty}r^{(N-1)/2}(\partial_r-\mathrm{i}k)v^s=0,\ & r:=|\mathbf{x}|,
\end{cases}
\end{equation}
where $k:=\omega\sqrt{\rho_b/\kappa}$ is known as the acoustic wave number, $\mathrm{i}=\sqrt{-1}$ and $v, v^s$ are respectively referred to as the total and scattered acoustic wave fields. The last limit is known as the Sommerfeld radiation condition, which holds uniformly in the angular variable $\hat{\mathbf{x}}:=\mathbf{x}/r\in\mathbb{S}^{N-1}$. The scattered field $v^s$ admits the following asymptotic expansion:
\begin{equation}\label{eq:f1}
v^s(\mathbf{x})=\frac{e^{\mathrm{i}k r}}{r^{(N-1)/2}} v_\infty({\hat{\mathbf{x}}})+\mathcal{O}(r^{-(N+1)/2})
\end{equation}
as $r\rightarrow +\infty$. Furthermore, $v_\infty$ is called the acoustic far-field pattern encoding the information of the elastic inclusion $(\Omega; \lambda, \mu, \rho_e)$ from the measurement of the induced acoustic wave field. It is worth emphasizing that the correspondence between $v_\infty$ and $v^s$ is one-to-one; see \cite{CK, Monk}.

Let us introduce an operator $\mathcal{F}$ that maps the elastic body to its corresponding acoustic far-field pattern via Equation \eqref{eq:transae1} as follows:
\begin{equation}\label{eq:operator1}
\mathcal{F}(\hat{\mathbf{x}}; (\Omega; \lambda, \mu, \rho_e), v)=v_\infty(\hat{\mathbf{x}}),\ \ \hat{\mathbf{x}}\in\mathbb{S}^{N-1}.
\end{equation}
It is direct to verify that $\mathcal{F}$ is a nonlinear and completely continuous operator due to the well-posedness of the system described by Equation \eqref{eq:transae1}. The problem of detecting a submerged object mentioned earlier can be recast as an inverse problem where the goal is to invert the operator $\mathcal{F}$. However, this inverse problem is inherently ill-posed due to the analyticity of $v_\infty$. In approaching the inverse problem \eqref{eq:operator1}, we naturally aim to characterize the range of the operator $\mathcal{F}$. However, we will take a completely different perspective by considering the kernel space of $\mathcal{F}$, which is defined as follows:
\begin{equation}\label{eq:kernel1}
\mathcal{F}(\hat{\mathbf{x}}; (\Omega; \lambda, \mu, \rho_e), v)=0.
\end{equation}
It turns out that the geometric characterization of the kernel space of $\mathcal{F}$ can be used to establish novel results for the inverse problem \eqref{eq:operator1} in certain challenging scenarios.
 On the other hand, it is noted that if \eqref{eq:kernel1} occurs, one has $v_\infty\equiv 0$, and hence $v^s\equiv 0$ in $\mathbb{R}^N\backslash\overline\Omega$ by the one-to-one correspondence between two of them. In such a case, the elastic inclusion $(\Omega; \lambda, \mu, \rho_e)$ is neutral for the acoustoelastic effect due to the acoustic impinging field $v^i$. From a probing point of view, this also means that the elastic body is invisible for the exterior measurement. Therefore, it is of great interest and significance to investigate the existence of such neutral inclusions and understand the quantitative behavior of the wave fields inside the elastic body when invisibility occurs. This exploration not only satisfies our curiosity from a physical perspective but also sheds light on the fundamental understanding of the system. By straightforward analysis, one can show that if $v_\infty\equiv 0$, then $( v|_{\Omega}, \mathbf{u})$ satisfies \eqref{eq:trans2} with $\Gamma=\partial\Omega$, namely, $\omega^2$ is an acoustoelastic transmission eigenvalue and $( v|_{\Omega}, \mathbf{u})$ is the corresponding pair of transmission eigenfunctions. Hence, the study of the transmission eigenvalue problem \eqref{eq:trans1} provides a broader spectral perspective to understand those both theoretically and practically important issues discussed above. It was proved in \cite{DLLT} that under a generic scenario, there exist acoustoelastic transmission eigenvalues to \eqref{eq:trans1}, which form a discrete set and only accumulate to infinity \cite{KR,Monk}.

The other physical scenario concerns the construction of metamaterials by bubbly elastic structures (cf.\cite{Ammari,Ammari1,Ammari2,LiLiuZou}). Here, one employs special structures with air bubbles embedded in elastic mediums to realize peculiar elastic properties in a homogenized sense. The  physical process is described by
\begin{equation}\label{eq:transea1}
\begin{cases}
\nabla\cdot(\rho_b^{-1}\nabla v)+\omega^2\kappa^{-1} v=0\quad & \mbox{in}\ \Omega,\medskip\\
\mathcal{L}_{\lambda, \mu} \mathbf{u}+\omega^2\rho_e\mathbf{u}=\mathbf{0}\quad & \mbox{in}\ \mathbb{R}^N\backslash\overline{\Omega},\medskip\\
\mathbf{u}\cdot\nu-\frac{1}{\rho_b\omega^2} \nabla v\cdot\nu=0\quad & \mbox{on}\ \partial\Omega,\medskip\\
T_{\nu}\mathbf{u}+v\nu=\mathbf{0}\quad & \mbox{on}\ \partial\Omega,\medskip\\
\mathbf{u}=\mathbf{u}^{i}+\mathbf{u}^{s}\quad &\mbox{in}\ \mathbb{R}^N\backslash\overline\Omega,\medskip\\
\mbox{$\mathbf{u}^s$ satisfies the Kupradze radiation condition}.
\end{cases}
\end{equation}
In this context, we assume that $\lambda, \mu$, and $\rho_e$ are constant, representing the homogeneous configuration of the elastic background medium. The region $(\Omega; \rho_b, \kappa)$ corresponds to the acoustic inclusion. Equation \eqref{eq:transea1} represents a complete solution to
\begin{equation}\label{eq:ei1}
\mathcal{L}_{\lambda, \mu}\mathbf{u}^{i}+\omega^2\rho_e\mathbf{u}^{i}=0\quad\mbox{in}\ \ \mathbb{R}^N,
\end{equation}
which characterizes the incident elastic field. On the other hand, $\mathbf{u}^{s}$ denotes the scattered field generated by the acoustic inclusion. The Kupradze radiation condition, given by
\begin{equation}\label{eq:kp1}
\lim_{r\rightarrow\infty}r^{\frac{n-1}{2}}\left(\frac{\partial \mathbf{u}_\beta^{s}}{\partial r}-\mathrm{i}k_\beta \mathbf{u}_\beta^{s}\right) =\,\mathbf 0,\quad \beta=p, s,
\end{equation}
is imposed. Here, $\mathbf{u}^{s}$ is decomposed into its pressure component $\mathbf{u}_{p}^{s }$ and shear component $\mathbf{u}_{s}^{s}$, defined as
\begin{equation}\label{eq:decomp1}
\mathbf{u}^{s}=\mathbf{u}_{p}^{s }+\mathbf{u}_{s}^{s},\ \	 \mathbf{u}_{p}^{s}:=-\frac{1}{k_{p}^{2}} \nabla\left( \nabla \cdot \mathbf{ u}^{s}\right ), \quad \mathbf{ u}_{s}^{s}:=\begin{cases}  \frac{1}{k_{s}^{2}}\nabla \times \nabla \times  {\mathbf u}^{s}\ \ & (\mbox{3D})\\
 \frac{1}{k_{s}^{2}} \bf{curl} \operatorname{curl} u^{\mathrm {s} }\ \ & (\mbox{2D})	
\end{cases},
\end{equation}
where
\begin{equation}\label{eq:kpks}
	k_p :=\frac{\omega\sqrt{\rho_e}}{\sqrt{ 2\mu+\lambda }} \quad \mbox{and} \ \ k_s:=\frac{\omega\sqrt{\rho_e}}{\sqrt{ \mu}}.
\end{equation}
In \eqref{eq:decomp1}, the two-dimensional operators $\bf{curl}$  and $\operatorname{curl}$ are defined respectively by
\[
 {\rm curl}\, \mathbf{ H}=\partial_1 H_2-\partial_2 H_1, \quad {\bf
curl}\, {h}=(\partial_2 h, -\partial_1 h)^\top,
\]
with $\mathbf{H}=(H_1, H_2)$ and $H$ being vector-valued and scalar functions, respectively. Similar to \eqref{eq:f1}, one has the following asymptotic expansions as $r=|\mathbf{x}|\rightarrow\infty$:
\begin{equation}\label{eq:far-field}
\begin{split}
\mathbf{u}^{{s}}(\mathbf{x})= \frac{{e}^{\mathrm{i} k_{p} r}}{{r}^{\frac{
N-1}{2} }} u_{p}^{\infty}(\hat{\mathbf{x} } ) \hat{\mathbf{x}}+\frac{{e}^{\mathrm{i} k_{s} r}}{{r}^{\frac{
N-1}{2} }} u_{s}^{\infty}(\hat{\mathbf{x} }) \hat{\mathbf{x} }^{\perp}+\mathcal{O}\left(\frac{1}{r^{(N+1) / 2}}\right) ,
\end{split}
\end{equation}
where $$	\mathbf{u}_{\beta}^{{s}}(\mathbf{x}) = \frac{{e}^{\mathrm{i} k_{\beta} r}}{{r}^{\frac{
	N-1}{2} }}\left\{u_{\beta}^{\infty}(\hat{\mathbf{x}}) \hat{\mathbf{x}}+\mathcal{O}\left(\frac{1}{r}\right)\right\},\ \ \beta=p, s.$$
Define $\mathbf{u}_t^{\infty}(\hat{\mathbf{x}} ) :=u_{p}^{\infty}(\hat{\mathbf{x}}) \hat{\mathbf{x}}+u_{s}^{\infty}(\hat{\mathbf{x}}) \hat{\mathbf{x}}^{\perp}$, which is referred to as the far-field pattern of $\mathbf{u}^{s}$. Furthermore, similar to \eqref{eq:operator1}, one can introduce the following abstract formulation of an inverse problem:
\begin{equation}\label{eq:so1}
\mathcal{S}\big((\Omega; \rho_b, \kappa), \mathbf{u}^{i}\big)=\mathbf{u}_t^\infty(\hat{\mathbf{x}}), \quad \hat{\mathbf{x}}\in\mathbb{S}^{N-1},
\end{equation}
where $\mathcal{S}$ is implicitly defined via the system \eqref{eq:transea1}. In a completely similar manner to \eqref{eq:kernel1}, one can show that the study of the kernel space of $\mathcal{S}$ naturally leads to the study of the transmission eigenvalue problem \eqref{eq:trans2} associated with $(v,\mathbf{u}^{i}|_{\Omega})$. That is, if $\mathbf{u}_t^\infty\equiv \mathbf{0}$, or equivalently $\mathbf{u}^{s}=0$ in $\mathbb{R}^N\backslash\overline{\Omega}$, one would have that $(v,\mathbf{u}^{i}|_{\Omega})$ is a pair of transmission eigenfunctions to \eqref{eq:trans2}. 


\subsection{Discussion on existing results and summary of main findings}

The acoustoelastic (AE) transmission eigenvalue problem \eqref{eq:trans2} has been considered in the literature in different contexts. For a particular case with $v\equiv 0$ and $\Gamma=\partial\Omega$, it is known as the Jones eigenvalue problem \cite{Monk, Luke}. Excluding the Jones eigenvalues guarantees the well-posedness of the PDE system \eqref{eq:transae1}.  There is a rich literature about the Jones eigenvalues; see \cite{Monk, Barucq} and the references cited therein.  Since the transmission eigenvalue problem \eqref{eq:trans2} has its complicated physical and mathematical nature that arises from the coupling of two different physical processes, there is not much progress on \eqref{eq:trans2}. In fact,  the transmission eigenvalue problem \eqref{eq:trans2} was investigated in \cite{KR,Monk} in the context of reconstruction schemes for the inverse problem \eqref{eq:operator1}.  It was proved that if the transmission eigenvalues exist for \eqref{eq:trans2}, they must form a discrete set and accumulate only at infinity.  Recently,   in \cite{DLLT}  it was demonstrated the existence of the transmission eigenvalue for \eqref{eq:trans2} under certain generic physical conditions. Furthermore,  a geometric rigidity result for the transmission eigenfunctions was established, which showed that there exist transmission functions that tend to localize on the boundary of the underlying domain.  For further developments on transmission eigenvalue problems in inverse scattering, see \cite{CCH23, CH13}. 



As discussed in the previous subsection, the AE transmission eigenvalue problem \eqref{eq:trans2} is closely related to the kernel space  of $\mathcal F$ and $\mathcal S$ defined by \eqref{eq:kernel1} and \eqref{eq:so1} respectively. When the kernel spaces, associated with the scattering problem \eqref{eq:transae1} and \eqref{eq:transea1} respectively, are not empty, the corresponding elastic inclusion $(\Omega; \lambda, \mu, \rho_e)$ and  acoustic inclusion  $(\Omega; \rho_b, \kappa)$, may not be detected through external observations, leading to their invisibility or transparency.  However, geometric singularities in the inclusion's shape, such as corners or points of high curvature, hinder invisibility and transparency, as demonstrated in \cite{BPS2014,CX21,PSV2017,EH18,BL2021} for time-harmonic acoustic scattering. New methods from free boundary problems, introduced in \cite{CV23,PS21}, have been applied to study the visibility or invisibility of isotropic acoustic inclusions with Lipschitz or less regular boundaries. The anisotropic acoustic case was further investigated in \cite{CVX23,KSS24}.




On the other hand, \cite{BL2017} explored the local geometric structure of acoustic transmission eigenfunctions near corners within the underlying domain, demonstrating that these eigenfunctions vanish near corners under certain regularity conditions. Further developments in elastic and electromagnetic transmission eigenvalue problems can be found in \cite{BLin19, DLS, BLx2021}. An important application of understanding the geometric structure of transmission eigenfunctions near corners is in establishing the unique shape identifiability of the inclusion using a single far-field measurement. This measurement refers to collecting far-field data in all observation directions for a fixed incident frequency and wave, as opposed to many far-field measurements. The problem of determining shape from a single far-field measurement, known in the literature as Schiffer's problem, has a rich history in inverse scattering problems (see \cite{COL, CK} and the references therein). To date, uniqueness results from a single far-field measurement hold only when certain a priori information about the inclusion's shape and size is known. For corresponding developments on unique identifiability in (conductive) acoustic, elastic, and electromagnetic medium scattering problems using a single far-field measurement, see \cite{BLin19, DCL, DLS, BLx2021,HSV16}.

In this paper,  we conduct a  comprehensive study of the  geometric property of AE transmission eigenfunctions $(v, \mathbf{u})$ to \eqref{eq:trans2} around a corner on the boundary of the underlying domain $\Omega$, under certain regularity assumptions.  Indeed, the geometric properties of AE transmission eigenfunctions $(v, \mathbf{u})$ can be summarized as follows:
\begin{itemize}
\item[a)] In $\mathbb R^2$, when $v$ is H\"older continuous and $\mathbf u$ has $C^{1,\alpha_1}$ ($0<\alpha_1<1$) regularity in a neighborhood of the corner $\mathbf x_c\in \partial \Omega$, we show that the strain  $\nabla^s \mathbf u(\mathbf x_c)$ of $\mathbf u=(u_1,u_2)^\top$ at $\mathbf x_c$ defined by \eqref{eq:traction1} must be a scalar matrix, depending solely on $\partial_{x_1} u_1(\mathbf x_c)$. For more details, see \eqref{eq:thm21}. 
\item[b)] In $\mathbb R^2$, under the assumptions in a), an intriguing algebraic characterization of $v(\mathbf x_c)$ and $\partial_{x_1} u_1(\mathbf x_c)$ with respect to the Lam\'e constants $\lambda$ and $\mu$ of the homogenous elastic background medium is obtained. For more details, see \eqref{eq:thm21 v0}. 
\item[c)] In $\mathbb R^2$, under the same setup in b), when $\partial_{x_1} u_1(\mathbf x_c)=0$, we reveal the vanishing property of $v$ at $\mathbf x_c$. Furthermore, if $\partial_{x_2} u_1(\mathbf x_c)=0$, we can prove that $\nabla \mathbf u(\mathbf x_c)$ is a zero matrix (see Theorem \ref{thm:thm21} for more details). 
\item[d)] The findings in a)-c) also hold at the edge corner point  in $\mathbb R^3$ under certain assumptions on AE transmission eigenfunctions $v$ and $\mathbf u$; see  Theorem \ref{thm:3Ru123} for more detailed discussions. 
\end{itemize}
We emphasize that these regularity assumptions on $(v, \mathbf{u})$  can be satisfied when investigating  the inverse problem \eqref{eq:so1} associated with the scattering problem \eqref{eq:transea1}.  These  geometric characterizations of  AE transmission eigenfunctions $(v, \mathbf{u})$ will be applied to examine the visibility of an acoustic inclusion containing a corner embedded in the elastic homogeneous background medium; see Theorem \ref{thm:radiating} for further discussions. Furthermore, by considering the scattering problem \eqref{eq:transea1}, we demonstrate the unique recovery of a convex polygonal acoustic inclusion using at most three  far-field measurements, leveraging the novel geometric properties of AE transmission eigenfunctions $(v, \mathbf{u})$ around the corner.  The unique results are detailed in Theorem \ref{thm:inverse1}. 


To establish the geometric structures of the AE transmission eigenfunctions as mentioned, it is essential to analyze the singularities of the underlying transmission eigenfunctions induced by the geometric irregularities of the domain, particularly at the corners. For this purpose, we develop a microlocal analysis localized around the corner. A crucial component of our study is an integral identity that involves the difference between the AE transmission eigenfunctions and a specific type of Complex Geometric Optics (CGO) solutions. Unlike most existing research, our work incorporates boundary integral terms arising from the coupled transmission conditions. The intricate nature of this coupled system introduces significant challenges in both analysis and estimation. By carefully analyzing the asymptotic behavior of the CGO solutions, we are able to achieve the desired results.

The remaining sections of the paper are structured as follows. In Section \ref{sec:2}, we derive the geometrical singularities of the transmission eigenfunctions near a corner in the two dimensional case. In Section \ref{sec:3}, we extend this analysis to the three dimensional case. Section \ref{sec:ip} focuses on the study of uniqueness in determining a polygonal acoustic inclusion in \eqref{eq:transea1} using at most three far-field measurements. Furthermore,  we demonstrate that an acoustic medium inclusion with a convex planar corner must scatter for any incident compressional or shear elastic wave. 




\section{Local geometrical properties near corners of acoustic-elastic transmission eigenfunctions: two-dimensional case}\label{sec:2}

This section is devoted to studying the geometric properties of the transmission eigenfunction pair $( v, \mathbf{u})$ about \eqref{eq:trans2} near the planar corner. Next, we introduce some notations for the geometric setup; see Fig.~\ref{fig:3} for a schematic illustration. In $\mathbb{R}^2$, let $(r, \theta)$ represent the polar coordinates of $\mathbf{x}$, where $\mathbf{x} = (x_1, x_2)^\top \in \mathbb{R}^2$. Thus, $r = |\mathbf{x}|$ and $\theta = \mathrm{arg}(x_1 + \bsi x_2)$. Furthermore, let $B_h$ denote the central disk at the origin with a radius of $h \in \mathbb{R}_+$. We consider the strictly convex sector defined by
\begin{equation}\label{eq:W}
W = \left\{ \mathbf{x} \in \mathbb{R}^2; \theta_m \leqslant \mathrm{arg}(x_1 + \bsi x_2) \leqslant \theta_M \right\},
\end{equation}
which is formed by the two half-lines $\Gamma^-$ and $\Gamma^+$. Here, $-\pi < \theta_m < \theta_M < \pi$ such that $0 < \theta_M - \theta_m < \pi$, and $\theta_m$ and $\theta_M$ correspond to the polar angles of $\Gamma^-$ and $\Gamma^+$, respectively.

 Considering the invariance of operators $\mathcal{L}_{\lambda, \mu}$ and $\Delta$ under rigid motions, we assmue that the Lipschitz domain $\Omega \Subset \mathbb{R}^2$ in \eqref{eq:trans2} contains a corner $\Omega \cap B_h = \Omega \cap W$ with the vertex located at $\mathbf{0} \in \Gamma$. Here, $W$ represents the sector as defined in \eqref{eq:W}, and $h \in \mathbb{R}+$ denotes a suitably small neighborhood. Specifically, we define the neighborhood as
\begin{equation}\label{eq:Sh}
S_h = \Omega \cap B_h = \Omega \cap W.
\end{equation}
Throughout the following discussion, we use the notations
\begin{equation}\label{eq:gammaEX}
\Gamma_h^{\pm} = \Gamma^{\pm} \cap B_h,\quad \Lambda_h = W \cap \partial B_h,
\end{equation}
and assume that $\Gamma$ in \eqref{eq:trans2} satisfies the condition that $\Gamma_h^{\pm} \Subset \Gamma$.

\begin{figure}[h]
\centering
\subfigure{\includegraphics[width=0.30\textwidth]{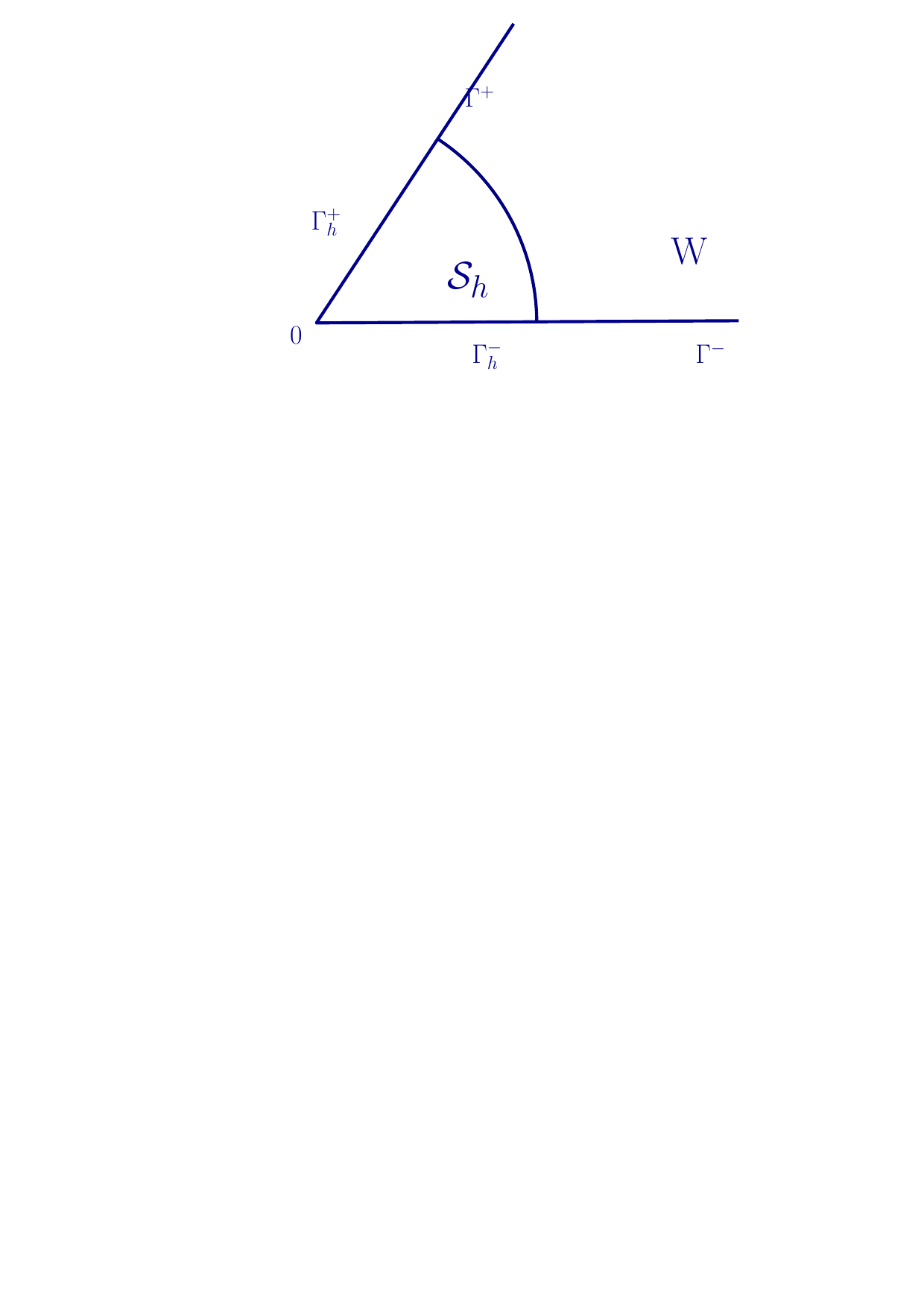}}
\caption{Schematic illustration of a 2D corner. \label{fig:3}}
\end{figure}

For the sake of simplicity, we assume that $\rho_b$ and $\kappa$ are constants in \eqref{eq:trans2}. For any transmission eigenfunction $(v, \mathbf{u}) \in H^1(\Omega) \times H^1(\Omega)^2$ to  \eqref{eq:trans3}, it is direct to imply that  $(v, \mathbf{u})$ satisfies the PDE system: 
\begin{equation}\label{eq:trans3}
\begin{cases}
\mathcal{L}_{\lambda, \mu} \mathbf{u}+\omega^2\rho_e\mathbf{u}=\mathbf{0}\quad & \mbox{in}\ S_h,\medskip\\
\Delta v+\omega^2 \rho_b\kappa^{-1} v=0\quad & \mbox{in}\ S_h,\medskip\\
\mathbf{u}\cdot\nu-\frac{1}{\rho_b\omega^2} \nabla v\cdot\nu=0\quad & \mbox{on}\ \Gamma_h^\pm,\medskip\\
T_{\nu}\mathbf{u}+v\nu=\mathbf{0}\quad & \mbox{on}\ \Gamma_h^\pm.
\end{cases}
\end{equation}

\begin{rem}
 Throughout this paper, the parameters $\lambda, \mu, \rho_e, \rho_b,$ and $\kappa$ are all real-valued. Without loss of generality, we can assume that the transmission eigenfunctions $(v, \mathbf{u})$ to  \eqref{eq:trans3} is also real-valued, and $\omega$ belongs to the set of positive real numbers $\mathbb R_+$. Alternatively, we can deal with the real and imaginary parts of $(v, \mathbf{u})$, respectively. 
\end{rem}

The following lemma can be obtained by using the Laplace transform and integral estimation.

\begin{lem}
\label{lem:int r }
Let $a \in \mathbb{C}$ and $\Re(a)>0$. For any given positive numbers $\alpha$ and  $h$ satisfying $0<h <  e $, if $    \Re(a) \geq \frac{2 \alpha}{ e}$, one has
 \begin{equation}\label{eq:1r1}
  \int_{0}^{h} r^{\alpha} {e}^{  -a r} \rmd r = \frac{  \Gamma ( \alpha + 1)}{a ^ {\alpha + 1}} +  \Oh({e}^{ -\frac{h \Re(a)}{2}})
 \end{equation}
 as $\Re(a)\rightarrow +\infty. $
 \end{lem}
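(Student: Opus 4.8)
The plan is to reduce the estimate to the standard Gamma integral on the half-line and then control the tail. First I would record the exact identity
\[
\int_0^\infty r^\alpha e^{-ar}\,\rmd r = \frac{\Gamma(\alpha+1)}{a^{\alpha+1}}, \qquad \Re(a)>0 .
\]
For real $a>0$ this is just the substitution $s=ar$ in $\Gamma(\alpha+1)=\int_0^\infty s^\alpha e^{-s}\,\rmd s$; for complex $a$ with $\Re(a)>0$ I would extend it by analytic continuation. Both sides are holomorphic in $a$ on the right half-plane (the left side because, on $\{\Re(a)\ge\delta\}$, the integrand is dominated by $r^\alpha e^{-\delta r}\in L^1(0,\infty)$ and one may differentiate under the integral sign; the right side upon taking the principal branch of $a^{\alpha+1}$), and they agree on the positive real axis, hence everywhere on $\Re(a)>0$ by the identity theorem. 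Writing $\int_0^h=\int_0^\infty-\int_h^\infty$ then turns the claim into the tail bound $\bigl|\int_h^\infty r^\alpha e^{-ar}\,\rmd r\bigr|=\Oh\!\left(e^{-h\Re(a)/2}\right)$.

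For the tail, since $|e^{-ar}|=e^{-\Re(a)r}$, it suffices to estimate the real integral $\int_h^\infty r^\alpha e^{-\Re(a)r}\,\rmd r$. The key elementary inequality is $\ln r\le r/e$ for all $r>0$ (the map $r\mapsto\ln r-r/e$ attains its maximum $0$ at $r=e$), which gives $r^\alpha\le e^{\alpha r/e}$. This is exactly where the hypothesis $\Re(a)\ge 2\alpha/e$ enters: it forces $\Re(a)-\alpha/e\ge\Re(a)/2$, so that
\[
r^\alpha e^{-\Re(a)r}\le e^{\alpha r/e-\Re(a)r}=e^{-(\Re(a)-\alpha/e)r}\le e^{-\Re(a)r/2}.
\]
Integrating from $h$ to $\infty$ yields $\int_h^\infty r^\alpha e^{-\Re(a)r}\,\rmd r\le \frac{2}{\Re(a)}\,e^{-h\Re(a)/2}$, and since the prefactor $2/\Re(a)\to 0$ it is bounded as $\Re(a)\to\infty$; this is precisely the asserted $\Oh(e^{-h\Re(a)/2})$. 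I note in passing that the standing smallness assumption $0<h<e$ is not actually required in this particular chain of estimates (it merely reflects $h$ being a small corner-neighbourhood radius elsewhere in the paper, and fixes $e^{-h\Re(a)/2}$ as the relevant error scale).

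I do not anticipate a genuine obstacle, since each step is routine; the only point requiring care is the rigorous justification of the half-line identity for complex $a$, where a naive substitution $s=ar$ tilts the ray of integration into the complex plane and one must either invoke analytic continuation as above or justify the contour rotation using $\Re(a)>0$ to kill the arc at infinity. The one genuinely clever ingredient is the bound $r^\alpha\le e^{\alpha r/e}$, whose sharp constant $1/e$ is exactly what produces the threshold $2\alpha/e$ appearing in the hypothesis.
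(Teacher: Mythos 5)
Your proof is correct, and it follows exactly the route the paper indicates but does not write out: the lemma is stated there with only the remark that it ``can be obtained by using the Laplace transform and integral estimation,'' and your half-line identity $\int_0^\infty r^\alpha e^{-ar}\,\mathrm{d}r=\Gamma(\alpha+1)/a^{\alpha+1}$ is precisely that Laplace transform, while your tail bound via $r^\alpha\le e^{\alpha r/e}$ and the hypothesis $\Re(a)\ge 2\alpha/e$ is the required integral estimation. Your side remark that $0<h<e$ is not needed for this chain of estimates is also accurate.
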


 The CGO solutions, commonly used to investigate the geometric properties of $( v, \mathbf{u})$ at planar corners, are constructed as follows. In Lemma \ref{lem:1}, we present the CGO solution related to the Laplacian operator  $\Delta$. The CGO solution associated with the Lam\'e operator $\mathcal L_{\lambda,\mu}$ is detailed in Lemma \ref{lem:u0}.

\begin{lem}\label{lem:1}
For the strictly convex sector $W$ defined in \eqref{eq:W}, there exists a vector
\begin{align}\label{eq:d def}
	 \mathbf d :=(\cos \phi, \sin \phi )^\top \in \mathbb S^1, \quad (\theta_M +\pi/2< \phi< \theta_m +3\pi/2)
\end{align}
such that
\begin{equation}\label{eq:d cond}
	\mathbf d  \cdot \boldsymbol{\tau} \leq -\delta<0, \quad \forall\, \boldsymbol{\tau} \in W \cap \mathbb S^1,
\end{equation}
where $\delta>0$ is a constant depending  on $W$ and $\mathbf d$. Let $\mathbf d^\perp$ be the unit vector obtained via rotating $\mathbf d$ anti-clockwise by $\pi/2$, namely, 
\begin{equation}\label{eq:d}
\mathbf d^\perp:=(-\sin \phi, \cos \phi )^\top.
\end{equation}
It is obvious that  $\mathbf d \perp \mathbf d^\perp$. Denote
	\begin{equation}\label{eq:cgo}
	v_0(\mathbf  x):= e^{ \brho  \cdot \mathbf x }, \quad \brho=s(\mathbf d+\mathrm i \mathbf d^\perp)
	,\quad  s\in \mathbb R_+ .
	\end{equation}
	Then
\begin{equation}\label{eq:v0 delta}
		\Delta v_0(\mathbf x)=0\quad \mbox{in}\quad R^2 .
\end{equation}
Furthermore, for $\alpha ,\,s>0$, it holds that
		{
        \begin{subequations}
		\begin{align}
            & \int_{\Gamma_h^\pm}|v_0(\mathbf{ x})| |\mathbf{x} |^\alpha {\rm d}\sigma \leq \frac{ \Gamma(\alpha+1) }{ (\delta s)^{\alpha+1}} + \Oh(e^{-s\delta h/2} ), \label{eq:v0 gamma al}\\
			& \int_{0}^h r^\alpha e^{r \boldsymbol{\rho} \cdot \boldsymbol \tau}  {\rm d}{ r} =\frac{  \Gamma(\alpha+1) }{ ( -\boldsymbol{\rho} \cdot \boldsymbol \tau )^{\alpha+1}} + \Oh(e^{-s\delta h/2} ),
			\quad \forall \,\boldsymbol \tau\in W\cap \mathbb S^1, \label{eq:v0 gamma}\\
			& \int_{S_h}|v_0(\mathbf{ x})| |\mathbf{x} |^\alpha {\rm d}\mathbf{ x} \leq \frac{(\theta_M-\theta_m) \Gamma(\alpha+2) }{ (\delta s)^{\alpha+2}} + \Oh(e^{-s\delta h/2} ), \label{eq:v0 alpha}\\
& \int_{S_h} v_0( \bfx ) {\rm d} \bfx = \int_{S_h} e^{ \brho \cdot \mathbf x } {\rm d} \bfx =  \frac{\bsi e^{2\bsi
			\phi }}{ 2 s^{2}}\left(e^{-2\bsi \theta_M} - e^{-2\bsi \theta_m}\right)+\Oh(e^{-s\delta h/2} ),\label{eq:v0sh} 
			\end{align}
		\end{subequations}
	as $s\rightarrow +\infty$.
	}
\end{lem}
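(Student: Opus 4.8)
The plan is to dispatch the harmonicity claim and the four integral estimates separately, with all of the estimates ultimately reducing to the one-dimensional Laplace-type identity of Lemma \ref{lem:int r }.

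\smallskip

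\noindent\emph{Harmonicity and the master pointwise identity.} First I would establish \eqref{eq:v0 delta} purely algebraically: since $\Delta e^{\brho\cdot\bfx}=(\brho\cdot\brho)\,e^{\brho\cdot\bfx}$ and, by the orthonormality of $\mathbf{d}$ and $\mathbf{d}^\perp$,
\[
\brho\cdot\brho=s^2(\mathbf{d}+\bsi\mathbf{d}^\perp)\cdot(\mathbf{d}+\bsi\mathbf{d}^\perp)=s^2\big(|\mathbf{d}|^2-|\mathbf{d}^\perp|^2+2\bsi\,\mathbf{d}\cdot\mathbf{d}^\perp\big)=0,
\]
the harmonicity follows. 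The backbone of the remaining estimates is the clean complex representation coming from $\brho=s e^{-\bsi\phi}(1,\bsi)$ (obtained directly from \eqref{eq:d def}--\eqref{eq:d}): identifying $\bfx=re^{\bsi\theta}$, one gets $\brho\cdot\bfx=sr\,e^{\bsi(\theta-\phi)}$, so that $|v_0(\bfx)|=e^{sr\cos(\theta-\phi)}=e^{sr(\mathbf{d}\cdot\boldsymbol{\tau})}$ with $\boldsymbol{\tau}=\bfx/|\bfx|$. Condition \eqref{eq:d cond} then yields the uniform decay $|v_0(\bfx)|\le e^{-s\delta r}$ on $W$, which is the single ingredient feeding the three bound-type estimates.

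\smallskip

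\noindent\emph{Estimates \eqref{eq:v0 gamma al}, \eqref{eq:v0 gamma}, \eqref{eq:v0 alpha}.} For \eqref{eq:v0 gamma al} I would parametrize $\Gamma_h^\pm$ by arclength $r\in(0,h)$, bound the integrand by $r^\alpha e^{-s\delta r}$, and apply Lemma \ref{lem:int r } with $a=s\delta$, the exponential bound on the integrand turning the equality of the lemma into the claimed inequality. For \eqref{eq:v0 gamma} I would set $a=-\brho\cdot\boldsymbol{\tau}$, which satisfies $\Re(a)=-s(\mathbf{d}\cdot\boldsymbol{\tau})\ge s\delta>0$, so the hypothesis $\Re(a)\ge 2\alpha/e$ of Lemma \ref{lem:int r } holds for all large $s$; the lemma applied to this complex exponent gives the stated identity, the error $\Oh(e^{-h\Re(a)/2})$ being absorbed into $\Oh(e^{-s\delta h/2})$ since $\Re(a)\ge s\delta$. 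For \eqref{eq:v0 alpha} I would pass to polar coordinates over the truncated sector $W\cap B_h\supseteq S_h$, use $|v_0|\,|\bfx|^\alpha\le r^\alpha e^{-s\delta r}$ together with $\rmd\bfx=r\,\rmd r\,\rmd\theta$, carry out the trivial $\theta$-integration producing the factor $\theta_M-\theta_m$, and apply Lemma \ref{lem:int r } with exponent $\alpha+1$.

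\smallskip

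\noindent\emph{Estimate \eqref{eq:v0sh}, the main point.} This is the only estimate demanding an exact leading term rather than an upper bound. I would compute $\int_{S_h}e^{\brho\cdot\bfx}\,\rmd\bfx$ over the truncated sector in polar coordinates as $\int_{\theta_m}^{\theta_M}\!\int_0^h r\,e^{sr e^{\bsi(\theta-\phi)}}\,\rmd r\,\rmd\theta$. The radial integral is handled by Lemma \ref{lem:int r } with $\alpha=1$ and $a=-se^{\bsi(\theta-\phi)}$, whose real part equals $-s\cos(\theta-\phi)\ge s\delta$ uniformly in $\theta$, giving $\tfrac{1}{s^2}e^{-2\bsi(\theta-\phi)}+\Oh(e^{-s\delta h/2})$. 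Integrating $\tfrac{1}{s^2}e^{2\bsi\phi}\int_{\theta_m}^{\theta_M}e^{-2\bsi\theta}\,\rmd\theta$ explicitly produces $\tfrac{\bsi e^{2\bsi\phi}}{2s^2}\big(e^{-2\bsi\theta_M}-e^{-2\bsi\theta_m}\big)$, matching the claimed leading term. The main obstacle is guaranteeing that the $\Oh$-error from the radial step is \emph{uniform} in $\theta$ so that it survives the angular integration as a single $\Oh(e^{-s\delta h/2})$ term; this is exactly where the uniform lower bound $\Re\!\big(-se^{\bsi(\theta-\phi)}\big)\ge s\delta$ over the compact interval $[\theta_m,\theta_M]$ is indispensable. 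A secondary subtlety I would flag is confirming that the integration region in \eqref{eq:v0sh} is the full truncated sector, i.e. that $S_h$ coincides with $W\cap B_h$ up to the vertex, so that the leading coefficient depends only on $\theta_m,\theta_M,\phi$ as stated.
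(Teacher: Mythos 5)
Your proposal is correct and follows essentially the same route as the paper's proof: harmonicity from $\brho\cdot\brho=0$, the pointwise bound $|v_0(\mathbf{x})|\le e^{-s\delta r}$ derived from \eqref{eq:d cond}, polar coordinates over the truncated sector, and Lemma \ref{lem:int r } applied with $a=s\delta$ for the inequalities and $a=-\brho\cdot\boldsymbol{\tau}$ (respectively $a=-se^{\bsi(\theta-\phi)}$) for the exact identities \eqref{eq:v0 gamma} and \eqref{eq:v0sh}. If anything, you are more explicit than the paper about verifying the hypothesis $\Re(a)\geq 2\alpha/e$ and the uniformity in $\theta$ of the error term, which the paper leaves implicit.
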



\begin{proof}
According to $\brho \cdot \brho=0$, one obtains \eqref{eq:v0 delta}.  Let $\mathbf x=r \boldsymbol{\tau} $, where $\boldsymbol{\tau}=(\cos \theta, \sin \theta)^\top$is the polar coordinate of $\mathbf x$. Then, based on \eqref{eq:d cond}, it is evident that
\begin{subequations}
\begin{align}
		& \brho \cdot \mathbf x=sr e^{\bsi (\theta-\phi)} 
	, \quad \forall \mbox{ }\mathbf x \in W \label{eq:rho x},\\
& \brho \cdot \mathbf x = s(\mathbf d+\mathrm i \mathbf d^\perp) \cdot r \boldsymbol{\tau} = s r \mathbf d \cdot \boldsymbol{\tau} + \bsi s r \mathbf d^\perp \cdot \boldsymbol{\tau} \mbox{ , }
|e^{\brho \cdot \mathbf x}| = |e^{s r \mathbf d \cdot \boldsymbol{\tau}} |\leq e^{-s r\delta } \label{eq:rho x2}.
    \end{align}
    \end{subequations}
    
	Since $|\mathbf x|=|r \boldsymbol{\tau} |= r$, by applying Lemma \ref{lem:int r } and combining it with \eqref{eq:rho x2}, we obtain
\begin{equation}\nonumber
\int_{S_h}|v_0(\mathbf{ x})| |\mathbf{x} |^\alpha {\rm d}\mathbf{ x} \leq \int_{0}^h \int_{\theta_m}^{\theta_M} e^{-s r\delta } r^{\alpha + 1} \rmd \theta \rmd r = \frac{(\theta_M-\theta_m) \Gamma(\alpha+2) }{ (\delta s)^{\alpha+2}} + \Oh(e^{-s\delta h/2} ).
\end{equation}

 Using polar coordinates and Lemma  \ref{lem:int r }, and considering  \eqref{eq:d cond}, \eqref{eq:rho x} and \eqref{eq:rho x2}, it is direct to calculate
	\begin{equation}\nonumber
		\begin{split}
			\int_{S_h} v_0(\mathbf x )\rmd \mathbf  x = & \int_{0}^h \int_{\theta_m}^{\theta_M}r e^{ \brho \cdot \mathbf x }\rmd \theta \rmd r = \int_{0}^h \int_{\theta_m}^{\theta_M} r e^{sr e^{\bsi (\theta-\phi)}} \rmd \theta \rmd r \\
=& \frac{\bsi e^{2\bsi
			\phi }}{ 2 s^{2}}\left(e^{-2\bsi \theta_M} - e^{-2\bsi \theta_m}\right)
			+\Oh(e^{-s\delta h/2} )\quad \mbox{as}\quad s\rightarrow +\infty.
		\end{split}
	\end{equation}
	Similarly, we can obtain \eqref{eq:v0 gamma al} and \eqref{eq:v0 gamma} by using Lemma \ref{lem:int r } and \eqref{eq:rho x2}.
	
	The proof is complete. 
\end{proof}

\begin{lem}\label{lem:u0}
	Under the same setup of Lemma \ref{lem:1}, let
	\begin{align}\label{eq:p def}
			\mathbf p=  \mathbf d^\perp-\bsi \mathbf d,
	\end{align}
 where $\mathbf d \in \mathbb S^1$ and $\mathbf d^\perp\in \mathbb S^1 $ are defined in \eqref{eq:d def} and \eqref{eq:d}, respectively. Denote
	\begin{equation}\label{eq:u0 cgo}
		\mathbf u_0(\mathbf x)=\mathbf p\, v_0(\mathbf x).
	\end{equation}
	Then
	\begin{equation}\label{eq:u0 lame}
		\mathcal{L}_{\lambda, \mu} \mathbf u_0(\mathbf x)=\mathbf 0 \quad \mbox{in}\quad R^2,
	\end{equation}
 where $\mathcal{L}_{\lambda, \mu}$ is defined in \eqref{eq:lame}. For any positive parameters $\alpha, s$, it holds that
\begin{equation}\label{eq:u0 alpha}
		\int_{S_h}|\mathbf u_0(\mathbf{ x})| |\mathbf{x} |^\alpha {\rm d}\mathbf{ x} \leq \frac{\sqrt{2}(\theta_M-\theta_m) \Gamma(\alpha+2) }{ (\delta s)^{\alpha+2}} + \Oh(e^{-s\delta h/2} ),
		\end{equation}
	and 
		\begin{equation}\label{eq:u0sh}
		\int_{S_h} \mathbf u_0( \bfx ) {\rm d} \bfx =  \frac{\bsi e^{2\bsi
				\phi }}{ 2 s^{2}}\left(e^{-2\bsi \theta_M} - e^{-2\bsi \theta_m}\right) \mathbf p+\Oh(e^{-s\delta h/2} ),
	\end{equation}
	
	as $s\rightarrow +\infty$.

\end{lem}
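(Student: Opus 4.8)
The plan is to exploit the fact that $\mathbf{u}_0 = \mathbf{p}\, v_0$ is the product of a \emph{constant} complex vector $\mathbf{p}$ with the scalar CGO function $v_0$ from Lemma~\ref{lem:1}, so that all three assertions reduce to the corresponding scalar facts about $v_0$ together with two short algebraic identities for $\mathbf{p}$.

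For the PDE \eqref{eq:u0 lame}, I would first observe that differentiation acts only on $v_0$. Since $\nabla v_0 = \brho\, v_0$, one has $\Delta \mathbf{u}_0 = \mathbf{p}\,\Delta v_0 = \mathbf{0}$ by \eqref{eq:v0 delta}, while $\nabla\cdot\mathbf{u}_0 = (\mathbf{p}\cdot\brho)\,v_0$. The crux is then the single computation $\mathbf{p}\cdot\brho = 0$: substituting $\mathbf{p} = \mathbf{d}^\perp - \mathrm{i}\mathbf{d}$ and $\brho = s(\mathbf{d} + \mathrm{i}\mathbf{d}^\perp)$ and using the orthonormality $\mathbf{d}\cdot\mathbf{d} = \mathbf{d}^\perp\cdot\mathbf{d}^\perp = 1$, $\mathbf{d}\cdot\mathbf{d}^\perp = 0$, the two cross terms vanish and the remaining terms $\mathrm{i}\,\mathbf{d}^\perp\cdot\mathbf{d}^\perp$ and $-\mathrm{i}\,\mathbf{d}\cdot\mathbf{d}$ cancel. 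Hence $\nabla\cdot\mathbf{u}_0 \equiv 0$, so $\nabla\nabla\cdot\mathbf{u}_0 = \mathbf{0}$ and $\mathcal{L}_{\lambda,\mu}\mathbf{u}_0 = \mu\Delta\mathbf{u}_0 + (\lambda+\mu)\nabla\nabla\cdot\mathbf{u}_0 = \mathbf{0}$ in $\mathbb{R}^2$.

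For the two integral statements, the second algebraic fact is $|\mathbf{p}| = \sqrt{2}$: writing $\mathbf{p} = (-\sin\phi - \mathrm{i}\cos\phi,\ \cos\phi - \mathrm{i}\sin\phi)$, each component has modulus one, so $|\mathbf{p}|^2 = 2$. Then $|\mathbf{u}_0(\mathbf{x})| = |\mathbf{p}|\,|v_0(\mathbf{x})| = \sqrt{2}\,|v_0(\mathbf{x})|$, and \eqref{eq:u0 alpha} follows by pulling the constant $\sqrt{2}$ out of the integral and invoking the scalar bound \eqref{eq:v0 alpha}, the factor $\sqrt{2}$ being absorbed harmlessly into the remainder $\Oh(e^{-s\delta h/2})$. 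Likewise, since $\mathbf{p}$ is constant, $\int_{S_h}\mathbf{u}_0\,\rmd\mathbf{x} = \mathbf{p}\int_{S_h} v_0\,\rmd\mathbf{x}$, and substituting the explicit value of $\int_{S_h} v_0\,\rmd\mathbf{x}$ from \eqref{eq:v0sh} yields \eqref{eq:u0sh} at once, with $\mathbf{p}$ multiplying both the leading term and the exponentially small remainder.

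There is no substantive analytic obstacle: the entire lemma is a corollary of Lemma~\ref{lem:1}. The only points requiring care are the two identities $\mathbf{p}\cdot\brho = 0$ (which makes $\mathbf{u}_0$ divergence-free and thereby Lam\'e-harmonic) and $|\mathbf{p}| = \sqrt{2}$ (which produces the prefactor in \eqref{eq:u0 alpha}); one should also be mindful that $\mathbf{p}\cdot\brho$ denotes the bilinear pairing used in the divergence computation, whereas $|\mathbf{p}|$ is the Hermitian norm entering the modulus estimates.
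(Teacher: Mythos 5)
Your proposal is correct and follows essentially the same route as the paper: the divergence-free identity $\mathbf{p}\cdot\brho=0$ yields \eqref{eq:u0 lame} via \eqref{eq:v0 delta}, the relation $|\mathbf{u}_0|=\sqrt{2}\,|v_0|$ reduces \eqref{eq:u0 alpha} to \eqref{eq:v0 alpha}, and pulling the constant $\mathbf{p}$ out of the integral gives \eqref{eq:u0sh} from \eqref{eq:v0sh}. Your explicit expansion of $\mathbf{p}\cdot\brho$ and the remark distinguishing the bilinear pairing from the Hermitian norm are just fuller versions of computations the paper leaves implicit.
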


\begin{proof}
	Since $\mathbf p \cdot \boldsymbol \rho=0$, 
	 where $ \boldsymbol \rho$ is defined in \eqref{eq:cgo}, by directly calculations, one has
	\begin{equation}\label{eq:u0 div}
		\nabla \cdot \mathbf u_0(\mathbf x)=0.
	\end{equation}
	Substituting \eqref{eq:v0 delta} and \eqref{eq:u0 div} into \eqref{eq:lame}, it is immediately to obtain
	 \eqref{eq:u0 lame}.
Since
\begin{equation}\label{eq:Shuv}
  \int_{S_h}|\mathbf u_0(\mathbf{ x})| |\mathbf{x} |^\alpha {\rm d}\mathbf{ x}=\sqrt{2}\int_{S_h}|\mathbf v_0(\mathbf{ x})| |\mathbf{x} |^\alpha {\rm d}\mathbf{ x},
\end{equation}
 by virtue of \eqref{eq:v0 alpha} and \eqref{eq:Shuv}, we can easily get \eqref{eq:u0 alpha}. Then combining \eqref{eq:v0sh} with \eqref{eq:u0 cgo}, we successfully attain \eqref{eq:u0sh}.
	\end{proof}

The following two propositions provide properties for the vectors $\boldsymbol{\rho}$ and $\mathbf{p}$ as defined in \eqref{eq:d def} and \eqref{eq:p def}, respectively. These properties will be utilized in our subsequent analysis. 

	\begin{prop}
		Let $\mathbf d$ and $\mathbf p$ be defined by \eqref{eq:d def} and \eqref{eq:p def} respectively.  	One has
	\begin{align}
	& \boldsymbol{\rho}=s  e^{-\mathrm i \phi} \mathbf{e}_1\mbox{, }\mbox{ } \mathbf p=- \mathrm{i} e^{-\mathrm i \phi} \mathbf{e}_1, \label{eq:p eqn} 
\end{align}
where $\boldsymbol{\rho} $ is given by \eqref{eq:cgo} and $\mathbf{e}_1 = (1, \bsi)^\top$.
Furthermore, it holds that
\begin{equation}\label{eq:248 rho p}
		\frac{1}{\mathrm i s} \boldsymbol{\rho}=\mathbf p.
	\end{equation}
	\end{prop}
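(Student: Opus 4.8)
The plan is to verify all three identities by direct substitution of the definitions \eqref{eq:d def}, \eqref{eq:d}, \eqref{eq:cgo} and \eqref{eq:p def}, combined with Euler's formula $e^{-\mathrm{i}\phi}=\cos\phi-\mathrm{i}\sin\phi$. This is an elementary calculation; there is no genuine analytical obstacle, and the only point worth isolating is the complex factorization that collapses the two real trigonometric entries of each vector into a single scalar multiple of $\mathbf{e}_1=(1,\mathrm{i})^\top$.

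First I would expand $\boldsymbol{\rho}=s(\mathbf d+\mathrm{i}\mathbf d^\perp)$ entrywise using $\mathbf d=(\cos\phi,\sin\phi)^\top$ and $\mathbf d^\perp=(-\sin\phi,\cos\phi)^\top$, obtaining
\begin{equation*}
\boldsymbol{\rho}=s\bigl(\cos\phi-\mathrm{i}\sin\phi,\ \sin\phi+\mathrm{i}\cos\phi\bigr)^\top.
\end{equation*}
The key observation is that the first entry equals $e^{-\mathrm{i}\phi}$ while the second equals $\mathrm{i}(\cos\phi-\mathrm{i}\sin\phi)=\mathrm{i}\,e^{-\mathrm{i}\phi}$, so the two entries factor as $e^{-\mathrm{i}\phi}(1,\mathrm{i})^\top$, giving $\boldsymbol{\rho}=s\,e^{-\mathrm{i}\phi}\mathbf{e}_1$, which is the first half of \eqref{eq:p eqn}.

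An entirely parallel computation handles $\mathbf p=\mathbf d^\perp-\mathrm{i}\mathbf d=(-\sin\phi-\mathrm{i}\cos\phi,\ \cos\phi-\mathrm{i}\sin\phi)^\top$; here the first entry is $-\mathrm{i}\,e^{-\mathrm{i}\phi}$ and the second is $e^{-\mathrm{i}\phi}=-\mathrm{i}\cdot\mathrm{i}\,e^{-\mathrm{i}\phi}$, so $\mathbf p=-\mathrm{i}\,e^{-\mathrm{i}\phi}\mathbf{e}_1$, completing \eqref{eq:p eqn}. Finally, the relation \eqref{eq:248 rho p} follows immediately by dividing the expression for $\boldsymbol{\rho}$ by $\mathrm{i}s$ and using $1/\mathrm{i}=-\mathrm{i}$, so that $\frac{1}{\mathrm{i}s}\boldsymbol{\rho}=\frac{1}{\mathrm{i}}e^{-\mathrm{i}\phi}\mathbf{e}_1=-\mathrm{i}\,e^{-\mathrm{i}\phi}\mathbf{e}_1=\mathbf p$. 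The main (and only mild) obstacle is simply bookkeeping the signs and the $\mathrm{i}$-factors correctly in the two factorizations.
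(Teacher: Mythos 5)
Your proposal is correct and follows essentially the same route as the paper: both expand $\boldsymbol{\rho}=s(\mathbf d+\mathrm i\mathbf d^\perp)$ and $\mathbf p=\mathbf d^\perp-\mathrm i\mathbf d$ entrywise, factor each vector as a scalar multiple of $\mathbf{e}_1=(1,\mathrm i)^\top$ via Euler's formula, and deduce \eqref{eq:248 rho p} by dividing by $\mathrm i s$. Your sign and $\mathrm i$-factor bookkeeping checks out throughout.
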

	
	\begin{proof}
		By \eqref{eq:cgo}, we can obtain the formulation of $\boldsymbol{\rho} $. It is ready to know that
\begin{align}
\boldsymbol{\rho} & = s (\mathbf d + \mathrm i \mathbf d^\perp) =\begin{bmatrix}
		\cos \phi \\ \sin \phi
	\end{bmatrix}  + \mathrm i \begin{bmatrix}
		-\sin \phi \\ \cos \phi
	\end{bmatrix}= s e^{-\mathrm i \phi} \mathbf{e}_1,\nonumber  \\
\mathbf p & =\mathbf d^\perp-\mathrm i \mathbf d =\begin{bmatrix}
	-\sin \phi \\ \cos \phi
\end{bmatrix} -\mathrm i \begin{bmatrix}
	\cos \phi \\ \sin \phi
\end{bmatrix}=- \mathrm{i} e^{-\mathrm i \phi} \mathbf{e}_1.\nonumber  
\end{align}
Therefore, it can be directly to deduce \eqref{eq:248 rho p}.
	\end{proof}

	\begin{prop}
		Let $\Gamma_h^\pm$ be defined by \eqref{eq:gammaEX}. Denote
		\begin{equation}\label{eq:taumM}
			\boldsymbol \tau_M=(\cos \theta_M, \sin \theta_M)^\top, \quad \boldsymbol \tau_m=(\cos \theta_m, \sin \theta_m)^\top
		\end{equation}
		by the  unit tangential vectors  to $\Gamma_h^\pm$ respectively. The exterior unit normal vectors to $\Gamma_h^\pm$ are given by
		\begin{equation}\label{eq:nu pm}
			\nu_M=(-\sin \theta_M, \cos \theta_M)^\top, \quad \nu_m=(\sin \theta_m, -\cos \theta_m)^\top.
		\end{equation}
		 Then one has
		 \begin{subequations}
		\begin{alignat}{2}
		  \brho \cdot \nu_M &  =\bsi s e^{\bsi (\theta_M-\phi )}, &\hspace{20pt} \brho \cdot \nu_m & =-\bsi s e^{\bsi (\theta_m-\phi )},\label{eq:rho 220a} \\ 
          \brho \cdot \boldsymbol \tau_M & = s e^{\bsi (\theta_M-\phi )}, &\hspace{20pt} \brho \cdot \boldsymbol\tau_m & = s e^{\bsi (\theta_m-\phi )},\label{eq:rho 220b}\\	 
            \mathbf p \cdot \nu_M & =e^{\bsi (\theta_M-\phi )},  
		 	&\hspace{20pt} \mathbf p \cdot \nu_m & =-e^{\bsi (\theta_m-\phi )}. \label{eq:rho 220c}
		\end{alignat}
		 \end{subequations}
		 where $\phi$ is the polar angle of $\mathbf d$ given in \eqref{eq:d cond},  $\boldsymbol \rho$ and $\mathbf p$ are defined in \eqref{eq:cgo} and \eqref{eq:u0 cgo} respectively.
	\end{prop}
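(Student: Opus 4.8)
The plan is to reduce all six inner products in \eqref{eq:rho 220a}--\eqref{eq:rho 220c} to a single elementary pairing identity by exploiting the representations already established in the preceding proposition, namely $\boldsymbol{\rho}=s\,e^{-\mathrm{i}\phi}\mathbf{e}_1$ and $\mathbf p=-\mathrm{i}\,e^{-\mathrm{i}\phi}\mathbf{e}_1$ from \eqref{eq:p eqn}, where $\mathbf{e}_1=(1,\mathrm{i})^\top$. The key observation is that the dot products here are the non-conjugated bilinear form on $\mathbb C^2$, which is consistent with \eqref{eq:rho x}: writing $\mathbf x=r\boldsymbol\tau$ there already used $\boldsymbol\rho\cdot\mathbf x=\sum_j\rho_j x_j$ without conjugation. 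Consequently, for any real angle $\alpha$ one has the identity $\mathbf{e}_1\cdot(\cos\alpha,\sin\alpha)^\top=\cos\alpha+\mathrm{i}\sin\alpha=e^{\mathrm{i}\alpha}$. Once this is in hand, every inner product factors cleanly as a scalar prefactor (either $s\,e^{-\mathrm{i}\phi}$ or $-\mathrm{i}\,e^{-\mathrm{i}\phi}$) times $\mathbf{e}_1$ paired with a unit direction vector.

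First I would rewrite the four boundary vectors from \eqref{eq:taumM} and \eqref{eq:nu pm} as points on the unit circle. The tangential vectors $\boldsymbol\tau_M$ and $\boldsymbol\tau_m$ are already in the form $(\cos\theta,\sin\theta)^\top$, so the identity yields $\mathbf{e}_1\cdot\boldsymbol\tau_M=e^{\mathrm{i}\theta_M}$ and $\mathbf{e}_1\cdot\boldsymbol\tau_m=e^{\mathrm{i}\theta_m}$ at once. For the normals I would absorb the rotation into the angle: since $\nu_M=(\cos(\theta_M+\pi/2),\sin(\theta_M+\pi/2))^\top$ and $\nu_m=(\cos(\theta_m-\pi/2),\sin(\theta_m-\pi/2))^\top$, the same identity gives $\mathbf{e}_1\cdot\nu_M=e^{\mathrm{i}(\theta_M+\pi/2)}=\mathrm{i}\,e^{\mathrm{i}\theta_M}$ and $\mathbf{e}_1\cdot\nu_m=e^{\mathrm{i}(\theta_m-\pi/2)}=-\mathrm{i}\,e^{\mathrm{i}\theta_m}$.

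Then I would assemble the six identities by scalar multiplication. For the $\boldsymbol\rho$-rows I multiply by $s\,e^{-\mathrm{i}\phi}$, obtaining, e.g., $\boldsymbol\rho\cdot\nu_M=s\,e^{-\mathrm{i}\phi}\cdot\mathrm{i}\,e^{\mathrm{i}\theta_M}=\mathrm{i}\,s\,e^{\mathrm{i}(\theta_M-\phi)}$ and $\boldsymbol\rho\cdot\boldsymbol\tau_M=s\,e^{\mathrm{i}(\theta_M-\phi)}$, with the $m$-versions following identically; this establishes \eqref{eq:rho 220a} and \eqref{eq:rho 220b}. For the $\mathbf p$-row I multiply by $-\mathrm{i}\,e^{-\mathrm{i}\phi}$ and track the imaginary-unit factors, using $-\mathrm{i}\cdot\mathrm{i}=1$ to get $\mathbf p\cdot\nu_M=e^{\mathrm{i}(\theta_M-\phi)}$ and $-\mathrm{i}\cdot(-\mathrm{i})=-1$ to get $\mathbf p\cdot\nu_m=-e^{\mathrm{i}(\theta_m-\phi)}$, which is \eqref{eq:rho 220c}.

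This is a routine verification rather than a deep result, so there is no genuine obstacle; the only points requiring care are the bookkeeping of the imaginary-unit prefactors and the asymmetric sign convention in the two normals (note the leading plus sign in $\nu_m$, in contrast to $\nu_M$), which is exactly what produces the opposite signs in the two halves of \eqref{eq:rho 220a} and \eqref{eq:rho 220c}. Routing everything through the $\mathbf{e}_1$ representation and the single pairing identity $\mathbf{e}_1\cdot(\cos\alpha,\sin\alpha)^\top=e^{\mathrm{i}\alpha}$ avoids the messier alternative of expanding products of sines and cosines and invoking the angle-addition formulas term by term.
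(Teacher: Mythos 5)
Your proposal is correct: all six identities check out, and your starting point, the factorizations $\boldsymbol{\rho}=s\,e^{-\mathrm{i}\phi}\mathbf{e}_1$ and $\mathbf{p}=-\mathrm{i}\,e^{-\mathrm{i}\phi}\mathbf{e}_1$ with $\mathbf{e}_1=(1,\mathrm{i})^\top$, is exactly what the paper established in \eqref{eq:p eqn}, so nothing in your argument is circular. You were also right to flag that the dot product is the non-conjugated bilinear pairing; that is the convention used throughout (e.g.\ in \eqref{eq:rho x} and in $\mathbf{p}\cdot\boldsymbol{\rho}=0$).

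Your route differs from the paper's in a small but genuine way. The paper does not reuse \eqref{eq:p eqn}; instead it expands $\boldsymbol{\rho}=s(\mathbf{d}+\mathrm{i}\,\mathbf{d}^\perp)$ and computes the eight real dot products $\mathbf{d}\cdot\boldsymbol{\tau}_{M,m}$, $\mathbf{d}^\perp\cdot\boldsymbol{\tau}_{M,m}$, $\mathbf{d}\cdot\nu_{M,m}$, $\mathbf{d}^\perp\cdot\nu_{M,m}$ via the angle-difference formulas (e.g.\ $\mathbf{d}\cdot\nu_M=\sin(\phi-\theta_M)$, $\mathbf{d}^\perp\cdot\nu_M=\cos(\phi-\theta_M)$), then reassembles each of them into a complex exponential. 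You instead push everything through the single pairing identity $\mathbf{e}_1\cdot(\cos\alpha,\sin\alpha)^\top=e^{\mathrm{i}\alpha}$, absorbing the $\pi/2$ rotations of the normals into the angle, so each identity becomes one scalar multiplication. The two arguments are of course equivalent, but yours has two mild advantages: it exploits the already-proven \eqref{eq:p eqn} rather than re-expanding from the definitions of $\mathbf{d}$ and $\mathbf{d}^\perp$, and it makes the sign asymmetry between $\nu_M$ and $\nu_m$ (the source of the minus signs in \eqref{eq:rho 220a} and \eqref{eq:rho 220c}) visible as the single factor $e^{\pm\mathrm{i}\pi/2}=\pm\mathrm{i}$ rather than as a sign scattered across four trigonometric identities. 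The paper's version buys only self-containedness of this proposition.
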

	\begin{proof}
		Using the properties
		\begin{subequations}\nonumber
					\begin{alignat}{2}
			\mathbf d \cdot \boldsymbol \tau_M  &=\cos(\theta_M-\phi ),&\hspace{20pt} \mathbf d \cdot \boldsymbol \tau_m & =\cos(\theta_m-\phi ),\\
			\mathbf d^\perp  \cdot \boldsymbol \tau_M&=\sin(\theta_M-\phi ),&\hspace{20pt} \mathbf d^\perp \cdot \boldsymbol \tau_m  &=\sin(\theta_m-\phi ).
		\end{alignat}
		\end{subequations}
		After some calculations, we derive \eqref{eq:rho 220b}. By noting
		\begin{subequations}\nonumber
					\begin{alignat}{2}
			\mathbf d \cdot \nu_M & =\sin(\phi -\theta_M), &\hspace{20pt} \mathbf d \cdot \nu_m & =-\sin(\phi-\theta_m ),\\
			\mathbf d^\perp  \cdot \nu_M&=\cos(\phi-\theta_M ), &\hspace{20pt} \mathbf d^\perp \cdot \nu_m & =-\cos(\phi-\theta_m ),
		\end{alignat}
		\end{subequations}
it  arrives that 
 \begin{align}\nonumber
\brho \cdot \nu_M & = s(\mathbf d+\mathrm i \mathbf d^\perp)  \cdot \nu_M = s(  \mathbf d \cdot \nu_M + \bsi \mathbf d^\perp \cdot \nu_M),\\
\brho \cdot \nu_m & = s(\mathbf d+\mathrm i \mathbf d^\perp)  \cdot \nu_m = s(  \mathbf d \cdot \nu_m + \bsi \mathbf d^\perp \cdot \nu_m).\nonumber
\end{align}
Hence, we can obtain \eqref{eq:rho 220a}. Similarly, the process of deriving \eqref{eq:rho 220c} is straightforward.
	\end{proof}
	
	The following lemma yields the explicit expression of the traction operator $T_\nu \mathbf u_0(\mathbf x)$, where $\mathbf u_0$ is given by \eqref{eq:u0 cgo}. In Lemma \ref{lem:24 u tu}, we state the estimates for  $L^2$ and $H^1$ norm of $\mathbf u_0$ and $T_\nu \mathbf u_0(\mathbf x)$ on $\Lambda_h$.

	\begin{lem}
		Let $\nu$ be the exterior normal vector to $\Gamma$ and $T_\nu$ be defined in \eqref{eq:traction1}, where $\Gamma \Subset \partial \Omega$. Recall that the CGO solution  $\mathbf u_0$ is given by \eqref{eq:u0 cgo}. Then
		\begin{equation}\label{eq:Tu}
			T_\nu \mathbf u_0(\mathbf x)=\mu [ ( \brho \cdot \nu )\mathbf p+(\mathbf p  \cdot \nu) \brho ] v_0(\mathbf x),
		\end{equation}
		where $v_0(\mathbf x)$ is defined in \eqref{eq:cgo}.
			\end{lem}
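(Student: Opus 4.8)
The plan is to compute $T_\nu \mathbf u_0$ directly from its definition \eqref{eq:traction1}, exploiting the elementary structure $\mathbf u_0 = \mathbf p\, v_0$ in which $\mathbf p$ is a constant vector and $v_0 = e^{\brho\cdot\mathbf x}$ satisfies $\nabla v_0 = \brho\, v_0$. First I would recall from \eqref{eq:u0 div} in Lemma \ref{lem:u0} that $\nabla\cdot\mathbf u_0 = 0$, which holds precisely because $\mathbf p\cdot\brho = 0$. Consequently the dilatational contribution $\lambda(\nabla\cdot\mathbf u_0)\nu$ appearing in \eqref{eq:traction1} vanishes identically, and only the symmetric-gradient term $2\mu(\nabla^s\mathbf u_0)\nu$ survives, which already simplifies the target considerably.

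Next I would evaluate the Jacobian $\nabla\mathbf u_0$. Writing $\mathbf p = (p_1,p_2)^\top$ and using $\partial_j v_0 = \rho_j v_0$ together with the constancy of the components $p_i$, the $(i,j)$ entry of $\nabla\mathbf u_0$ equals $p_i\rho_j v_0$; in other words, $\nabla\mathbf u_0$ is the rank-one dyad $v_0\,\mathbf p\,\brho^t$. Symmetrizing according to \eqref{eq:traction1} then gives $\nabla^s\mathbf u_0 = \tfrac{1}{2}v_0\,(\mathbf p\,\brho^t + \brho\,\mathbf p^t)$, a symmetric matrix built from the two outer products.

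Then I would apply this matrix to the exterior normal $\nu$. A one-line index computation yields $(\nabla^s\mathbf u_0)\nu = \tfrac12 v_0\big[(\brho\cdot\nu)\mathbf p + (\mathbf p\cdot\nu)\brho\big]$, since contracting $\mathbf p\,\brho^t$ against $\nu$ produces $(\brho\cdot\nu)\mathbf p$ and contracting $\brho\,\mathbf p^t$ against $\nu$ produces $(\mathbf p\cdot\nu)\brho$. Multiplying by $2\mu$ and adding the vanishing $\lambda$-term recovers exactly the claimed identity \eqref{eq:Tu}.

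As for the main obstacle, there is in fact no substantive difficulty here: the identity is an immediate consequence of the product rule applied to $\mathbf p\,v_0$ combined with the divergence-free property of $\mathbf u_0$. The only points demanding minor care are the bookkeeping of indices and transpose conventions when assembling $\nabla^s\mathbf u_0$, and the explicit invocation of $\nabla\cdot\mathbf u_0 = 0$ so that the $\lambda$-term is seen to drop out cleanly rather than being silently discarded.
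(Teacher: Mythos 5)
Your proposal is correct and follows essentially the same route as the paper: invoke $\nabla\cdot\mathbf u_0=0$ (from \eqref{eq:u0 div}) to drop the $\lambda$-term, reduce $T_\nu\mathbf u_0$ to $\mu(\nabla\mathbf u_0+\nabla\mathbf u_0^t)\nu$, and finish by the direct computation with $\nabla v_0=\brho\,v_0$. The only difference is that you write out explicitly the rank-one dyad calculation that the paper compresses into ``by performing direct calculations.''
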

 
	\begin{proof}
		Due to \eqref{eq:u0 div}, by the definition \eqref{eq:traction1}, as well as on the basis of the definition of $v_0(\mathbf  x)$ and $\mathbf{u}_0(\mathbf x)$ in \eqref{eq:cgo} and \eqref{eq:u0 cgo}, one yields that 
\begin{align}
T_\nu\mathbf{u}_0(\mathbf x)& =\lambda(\nabla\cdot\mathbf{u}_0(\mathbf x))\nu+2\mu(\nabla^s\mathbf{u}_0(\mathbf x))\nu \label{eq:Tu0} \\
&= 2\mu(\nabla^s\mathbf{u}_0(\mathbf x))\nu= \mu (\nabla\mathbf{u}_0(\mathbf x)+\nabla\mathbf{u}_0(\mathbf x)^t) \nu, \nonumber
\end{align}
 Thereby, \eqref{eq:Tu0} can be rewritten as
 \begin{equation}\label{eq:Tu01}
 T_\nu\mathbf{u}_0(\mathbf x)= \mu [\nabla(\mathbf p \,e^{ \brho  \cdot \mathbf x }) + \nabla(\mathbf p\, e^{ \brho  \cdot \mathbf x })^t]\nu.
 \end{equation}
 By performing direct calculations, it yields that \eqref{eq:Tu}.
	\end{proof}
	
	\begin{lem}\label{lem:24 u tu}
	Suppose that $\Lambda_h$, $v_0(\mathbf x)$, and $\mathbf u_0(\mathbf x)$  be defined by \eqref{eq:gammaEX}, \eqref{eq:cgo} and  \eqref{eq:u0 cgo} respectively. Recall that $\delta>0$ is given in \eqref{eq:d cond}. Then we have
	\begin{subequations}
		\begin{align}
			\|v_0( \mathbf x)\|_{H^1(\Lambda_h )^{2}}& \leq \sqrt {1+8 s^2} \sqrt {h(\theta_M-\theta_m )}\,e^{-s  h \delta} , \label{eq:v H1}\\
			\left	\| \nabla v_0( \mathbf x ) \right \|_{L^2(\Lambda_h )^{2}} &\leq 2 s  \sqrt{ 2 h (\theta_M-\theta_m)}\, e^{-s  h \delta },\label{eq:p v0} \\
			\| \mathbf{u}_0(\mathbf x) \|_{H^1(\Lambda_h )^{2} }
			&\leq 2\sqrt{1+ 32 s^2} \sqrt{h(\theta_M-\theta_m )} \,e^{-s  h \delta},\label{eq:u0 l2}\\
			\| T_{\nu}\mathbf{u}_0(\mathbf x) \|_{L^2(\Lambda_h )^{2} } &\leq 8 \mu (1+s^2 ) \sqrt{h(\theta_M-\theta_m )}\, e^{-s  h \delta}, \label{eq:Tu L2}
		\end{align}
	\end{subequations}
	all of which decay exponentially as $s\rightarrow +\infty. $
\end{lem}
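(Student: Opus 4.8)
The plan is to reduce all four estimates to a single pointwise bound on $|v_0|$ along the arc $\Lambda_h = W \cap \partial B_h$, followed by a one-variable integration. First I would parametrize $\Lambda_h$ by $\mathbf x = h\boldsymbol\tau(\theta)$ with $\boldsymbol\tau(\theta) = (\cos\theta,\sin\theta)^\top$ and $\theta \in [\theta_m,\theta_M]$, so that the arclength element is $\mathrm d\sigma = h\,\mathrm d\theta$ and the arc has length $h(\theta_M-\theta_m)$. Setting $r = h$ in \eqref{eq:rho x2} gives the uniform decay $|v_0(\mathbf x)| \le e^{-sh\delta}$ on $\Lambda_h$, whence
\[
\int_{\Lambda_h}|v_0(\mathbf x)|^2\,\mathrm d\sigma \le h(\theta_M-\theta_m)\,e^{-2sh\delta}.
\]
This single estimate, combined with the algebraic identities for $\brho$ and $\mathbf p$ in \eqref{eq:p eqn}, is the engine behind every inequality in the statement.

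Next I would record the elementary magnitudes entering as multiplicative constants. From \eqref{eq:p eqn} with $\mathbf e_1 = (1,\bsi)^\top$ one has $|\brho| = \sqrt 2\, s$ and $|\mathbf p| = \sqrt 2$. Because $v_0 = e^{\brho\cdot\mathbf x}$ is a pure exponential, differentiation is multiplication: $\nabla v_0 = \brho\, v_0$, so $|\nabla v_0| = \sqrt 2\, s\,|v_0|$ pointwise. Integrating over $\Lambda_h$ then yields \eqref{eq:p v0}, and adding $\|v_0\|_{L^2(\Lambda_h)}^2$ to $\|\nabla v_0\|_{L^2(\Lambda_h)}^2$ gives $\|v_0\|_{H^1(\Lambda_h)}^2 \le (1 + 2s^2)\,h(\theta_M-\theta_m)e^{-2sh\delta}$, which after taking square roots is absorbed into the (non-sharp) constant $\sqrt{1+8s^2}$ in \eqref{eq:v H1}.

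The estimates for $\mathbf u_0$ proceed identically. Since $\mathbf u_0 = \mathbf p\,v_0$ by \eqref{eq:u0 cgo}, one has $|\mathbf u_0| = \sqrt 2\,|v_0|$ and $\nabla\mathbf u_0 = \mathbf p\otimes(\brho\,v_0)$, so $|\nabla\mathbf u_0| = |\mathbf p|\,|\brho|\,|v_0| = 2s\,|v_0|$; combining the resulting $L^2$ bounds for $\mathbf u_0$ and $\nabla\mathbf u_0$ produces \eqref{eq:u0 l2}. For the traction I would invoke the explicit formula \eqref{eq:Tu}, so that pointwise
\[
|T_\nu\mathbf u_0(\mathbf x)| \le \mu\big(|\brho\cdot\nu|\,|\mathbf p| + |\mathbf p\cdot\nu|\,|\brho|\big)|v_0| \le \mu\big(\sqrt 2\, s\cdot\sqrt 2 + \sqrt 2\cdot\sqrt 2\, s\big)|v_0| = 4\mu s\,|v_0|,
\]
where I used the crude bounds $|\brho\cdot\nu|\le|\brho|$ and $|\mathbf p\cdot\nu|\le|\mathbf p|$; integrating and then using $s \le 1+s^2$ delivers \eqref{eq:Tu L2}.

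I do not anticipate a genuine analytical obstacle: every quantity is a fixed complex vector times the scalar exponential $v_0$, and the decay $|v_0|\le e^{-sh\delta}$ carries all four bounds to zero as $s\to+\infty$. The one point demanding care is that $\brho$ is a \emph{null} vector, $\brho\cdot\brho = 0$, so the magnitude relevant to the $L^2$ computations is the Hermitian norm $|\brho|^2 = \brho\cdot\overline{\brho} = 2s^2$ and not the bilinear product $\brho\cdot\brho$; using the latter would erroneously annihilate precisely the gradient and traction terms being estimated. Beyond that, the remaining work is only the bookkeeping of numerical constants, which are stated deliberately non-sharply (for instance $4\mu s$ replaced by $8\mu(1+s^2)$) so that the clean exponential decay in $s$ is the sole feature carried into the subsequent integral-identity arguments.
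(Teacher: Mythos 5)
Your proposal is correct and follows essentially the same route as the paper: pointwise exponential decay $|v_0|\le e^{-sh\delta}$ on the arc $\Lambda_h$ (from \eqref{eq:rho x2} with $r=h$), differentiation realized as multiplication by $\brho$, norm bounds on $\brho$ and $\mathbf p$, and integration over the arc of length $h(\theta_M-\theta_m)$. The only difference is cosmetic — you use the exact Hermitian magnitudes $|\brho|=\sqrt2\,s$, $|\mathbf p|=\sqrt2$ where the paper uses the cruder componentwise bounds $|\rho_j|\le 2s$, $|p_j|\le 2$, so your intermediate constants are slightly sharper but are absorbed into the same stated (non-sharp) bounds.
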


\begin{proof}
Recalling \eqref{eq:d cond}, \eqref{eq:cgo} and combining \eqref{eq:rho x2}, one has
\begin{equation}\label{eq:21 u1norm}
	\| v_0( \mathbf x)\|^2_{L^2(\Lambda_h )^2 }=  r \int_{\theta_m}^{\theta_M} |e^{ 2\boldsymbol \rho  \cdot \mathbf x}|\rmd \theta \leq h \int_{\theta_m}^{\theta_M} \,e^{- 2 s  h \delta} \rmd \theta \leq h\,e^{- 2 s  h \delta}(\theta_M-\theta_m).
\end{equation}
Since
\begin{equation}\label{eq:part v0}
	\frac{\partial v_0}{\partial x_1}=\rho_{1} e^{\boldsymbol \rho  \cdot \mathbf x}, \quad \frac{\partial v_0}{\partial x_2}=\rho_{2} e^{\boldsymbol \rho  \cdot \mathbf x},\quad |\boldsymbol{ \rho}| \leq 2 s,
\end{equation}
where $\brho_1=(\rho_{1},\rho_{2})^\top$,
by virtue of \eqref{eq:d cond} and direct calculations, one has
\begin{equation}\label{eq:22 norm}
\left	\| \nabla v_0( \mathbf x ) \right \|^2_{L^2(\Lambda_h )^{2}} \leq 8 s^2 h\, e^{-2s  h \delta }(\theta_M-\theta_m).
\end{equation}
Thus, it's easy to attain \eqref{eq:p v0}. Combing \eqref{eq:21 u1norm} with \eqref{eq:22 norm},  we can further get the \eqref{eq:v H1}.

Similarly, by the knowledge of  \eqref{eq:u0 cgo} and \eqref{eq:21 u1norm},  we note that
\begin{equation}\label{eq:p ineq}
	|\mathbf p| \leqslant 2.
\end{equation}
Hence, we have
\begin{equation}\label{eq:226 u0l2}
	\| \mathbf u_0( \mathbf x)\|^2 _{L^2(\Lambda_h )^2 } \leqslant 8  h\, e^{- 2 s  h \delta} (\theta_M-\theta_m).
\end{equation}
Since \begin{equation}\notag
	\nabla \mathbf u_0( \mathbf x)=\nabla(\mathbf p  v_0( \mathbf x ))=\begin{bmatrix}
		p_{1} \rho_{1} & p_{1} \rho_{2} \\
		p_{2} \rho_{1} &p_{2} \rho_{2}
		\end{bmatrix}e^{\boldsymbol \rho \cdot \mathbf x}
	=\begin{bmatrix}
		 p_{1} \\ p_{2}
	 \end{bmatrix}
 \begin{bmatrix}
 	\rho_{1} & \rho_{2}
 \end{bmatrix}e^{\boldsymbol \rho \cdot \mathbf x},
\end{equation} 
where $\mathbf p=(p_{1},p_{2})^\top $.
Furthermore, one can imply that
 \begin{equation}\label{gradient u0}
	\left	\| \nabla\mathbf u_0( \mathbf x ) \right \|^2_{L^2(\Lambda_h )^{2}} \leqslant 64 s^2  h e^{- 2 s  h \delta} (\theta_M-\theta_m). 
\end{equation}
Combining \eqref{eq:226 u0l2} with \eqref{gradient u0}, we can obtain \eqref{eq:u0 l2}. 
Clearly, we know that $|\rho_{1}| \leqslant 2s$, $|\rho_{2}| \leqslant 2s$, here $\rho_{1} $ and $\rho_{2}$ are the two components of $\boldsymbol \rho$. Similarly, $|p_{1}| \leqslant 2$ and  $|p_{2}| \leqslant 2$, where $p_{1}$ and $p_{2}$ are the two components of $\mathbf p$. By virtue of \eqref{eq:Tu}, it's simply to obtain the following inequality
\begin{equation}\label{eq:227 tu0}
	\begin{split}
		\| T_{\nu}(\mathbf{u}_0) \|_{L^2(\Lambda_h ) ^{2}}^2&=\mu^2 \left (\int_{\Lambda_h} \left| [(\brho   \cdot \nu ), (\mathbf p  \cdot \nu) ] \begin{bmatrix}
			p_{1}\\ \rho_{1}
		\end{bmatrix}\right|^2 |v_0|^2 \rmd \sigma \right. \\
		&\left.\quad +\int_{\Lambda_h} \left| [(\brho  \cdot \nu ), (\mathbf p  \cdot \nu) ] \begin{bmatrix}
			p_{2}\\ \rho_{2}
		\end{bmatrix}\right|^2 |v_0|^2 \rmd \sigma \right)\\
		&\leq 64 \mu^2 (1+s^2)^2 h e^{-2s  h \delta} (\theta_M-\theta_m ).
	\end{split}
\end{equation}
Therefore, we can easily get \eqref{eq:Tu L2}.
\end{proof}

Next, we recall a special type of Green formula for $H^1$ functions, which shall be needed in establishing a key integral identity for deriving the vanishing property of the transmission eigenfunction.

\begin{lem}\label{lem:green}
	Let $\Omega\Subset \mathbb R^2 $ be a bounded Lipschitz domain. For any $f,g\in H^1_\Delta:=\{f\in H^1(\Omega)|\Delta f\in L^2(\Omega)\}$, there holds the following second Green identity:
	\begin{equation}\label{eq:GIN1}
		\int_\Omega(g\Delta f-f\Delta g)\,\mathrm{d}x=\int_{\partial\Omega}(g\partial_{\nu}f-f\partial_{\nu}g)\,\mathrm{d}\sigma.
	\end{equation}
	Similarly, suppose that $\mathbf{u_1} \in H^1(\Omega )^2 $ and $\mathbf v_1 \in H^1(\Omega )^2$ satisfying $\mathbf{u_1}, \mathbf v_1\in H^1_{ \mathcal L_{\lambda,\mu}}:=\{\mathbf f\in H^1(\Omega)^2|\mathcal L_{\lambda, \mu  } \mathbf f\in L^2(\Omega)^2\}$. The the following Green identity holds
\begin{equation}\label{eq:green1}
	\int_{\Omega}(\mathbf{u}_1 \cdot \mathcal{L}_{\lambda,\mu}\mathbf{v}_{1} - \mathbf{u}_1 \cdot \mathcal{L}_{\lambda,\mu}\mathbf{v}_{1}  ){\rm d}\mathbf{x}=\int_{\partial\Omega}( T_{\mathbf{\nu}}\mathbf{v}_{1} \cdot \mathbf{u}_1-T_{\mathbf{\nu}}\mathbf{u}_1 \cdot \mathbf{v}_{1}) {\rm d}\sigma.
\end{equation}

\end{lem}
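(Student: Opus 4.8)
The plan is to derive both identities from their respective ``first'' Green (Betti) identities by a symmetrisation-and-subtraction argument, paying attention to the fact that in the classes $H^1_\Delta$ and $H^1_{\mathcal L_{\lambda,\mu}}$ the functions need not lie in $H^2$, so the normal derivative $\partial_\nu f$ and the traction $T_\nu\mathbf u$ must be understood as elements of $H^{-1/2}(\partial\Omega)$ (resp. $H^{-1/2}(\partial\Omega)^2$), and the boundary integrals as the corresponding duality pairings against traces in $H^{1/2}(\partial\Omega)$.

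For the scalar identity \eqref{eq:GIN1}, I would first make sense of $\partial_\nu f$ for $f\in H^1_\Delta$. By the trace theorem on the Lipschitz domain $\Omega$, every $\varphi\in H^{1/2}(\partial\Omega)$ admits a bounded lifting $\Phi\in H^1(\Omega)$, and I define $\partial_\nu f\in H^{-1/2}(\partial\Omega)$ by
\[
\langle\partial_\nu f,\varphi\rangle_{\partial\Omega}:=\int_\Omega\big(\nabla f\cdot\nabla\Phi+\Phi\,\Delta f\big)\,\mathrm{d}x ,
\]
which is independent of the chosen lifting and bounded, since $\|f\|_{H^1(\Omega)}+\|\Delta f\|_{L^2(\Omega)}$ controls the right-hand side; this is exactly the weak first Green identity. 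Taking $\Phi=g$ and then $\Phi=f$ with the roles of $f$ and $g$ interchanged, i.e. testing $\partial_\nu f$ against $g|_{\partial\Omega}$ and $\partial_\nu g$ against $f|_{\partial\Omega}$, and subtracting, the symmetric cross term $\int_\Omega\nabla f\cdot\nabla g\,\mathrm{d}x$ cancels, leaving precisely $\int_\Omega(g\Delta f-f\Delta g)\,\mathrm{d}x$ in the interior and $\langle\partial_\nu f,g\rangle_{\partial\Omega}-\langle\partial_\nu g,f\rangle_{\partial\Omega}$ on the boundary, which is \eqref{eq:GIN1}.

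The vector identity \eqref{eq:green1} follows from the same mechanism applied to the Lam\'e operator. The relevant tool is the first Betti formula: for $\mathbf u\in H^1(\Omega)^2$ and $\mathbf v\in H^1_{\mathcal L_{\lambda,\mu}}$,
\[
\int_\Omega\mathbf u\cdot\mathcal L_{\lambda,\mu}\mathbf v\,\mathrm{d}x=\langle T_\nu\mathbf v,\mathbf u\rangle_{\partial\Omega}-\int_\Omega\mathcal E(\mathbf u,\mathbf v)\,\mathrm{d}x ,
\]
where $\mathcal E(\mathbf u,\mathbf v):=\lambda(\nabla\cdot\mathbf u)(\nabla\cdot\mathbf v)+2\mu\,\nabla^s\mathbf u:\nabla^s\mathbf v$ is the elastic energy bilinear form associated with \eqref{eq:lame} and \eqref{eq:traction1}, and $T_\nu\mathbf v\in H^{-1/2}(\partial\Omega)^2$ is defined weakly by this very relation, in the same lifting-based manner as $\partial_\nu f$ above. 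The decisive algebraic fact is the symmetry $\mathcal E(\mathbf u,\mathbf v)=\mathcal E(\mathbf v,\mathbf u)$: the term $(\nabla\cdot\mathbf u)(\nabla\cdot\mathbf v)$ is manifestly symmetric, and $\nabla^s\mathbf u:\nabla^s\mathbf v$ is symmetric because the Frobenius product of the two symmetric strain tensors satisfies $A:B=B:A$. Writing the first Betti formula once as stated and once with $\mathbf u$ and $\mathbf v$ interchanged and subtracting, this symmetric energy term drops out, yielding $\int_\Omega(\mathbf u\cdot\mathcal L_{\lambda,\mu}\mathbf v-\mathbf v\cdot\mathcal L_{\lambda,\mu}\mathbf u)\,\mathrm{d}x=\langle T_\nu\mathbf v,\mathbf u\rangle_{\partial\Omega}-\langle T_\nu\mathbf u,\mathbf v\rangle_{\partial\Omega}$, which is \eqref{eq:green1} (the second term on the left of \eqref{eq:green1} is to be read as $\mathbf v_1\cdot\mathcal L_{\lambda,\mu}\mathbf u_1$).

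The main obstacle, and the only genuinely non-routine point, is the rigorous treatment of the boundary terms at this reduced regularity: one must verify that $\partial_\nu f$ and $T_\nu\mathbf v$ are well-defined, lifting-independent elements of $H^{-1/2}(\partial\Omega)$ and $H^{-1/2}(\partial\Omega)^2$, for which the trace and extension theorems on Lipschitz domains are essential. Once the first Green and Betti identities are established in this weak sense, the symmetry of the interior bilinear forms --- $\int_\Omega\nabla f\cdot\nabla g\,\mathrm{d}x$ in the scalar case and $\mathcal E(\cdot,\cdot)$ in the Lam\'e case --- reduces the remainder to a one-line cancellation. Alternatively, one may bypass the explicit weak traces by approximating $f,g$ (resp. $\mathbf u,\mathbf v$) in the graph norm $\|\cdot\|_{H^1}+\|\Delta\cdot\|_{L^2}$ (resp. $\|\cdot\|_{H^1}+\|\mathcal L_{\lambda,\mu}\cdot\|_{L^2}$) by functions in $C^\infty(\overline\Omega)$, applying the classical identities, and passing to the limit; the cost is then to justify this density on a Lipschitz domain.
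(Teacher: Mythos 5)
Your proposal is correct, but it takes a different route from the paper in one basic sense: the paper does not prove this lemma at all. As noted in the remark immediately following it, the paper treats the lemma as a special case of known results and simply cites \cite[Lemma 3.4]{costabel88} and \cite[Theorem 4.4]{McLean}. What you have written out is, in effect, the standard argument underlying those citations: define $\partial_\nu f$ (resp.\ $T_\nu\mathbf{v}$) as an element of $H^{-1/2}(\partial\Omega)$ (resp.\ $H^{-1/2}(\partial\Omega)^2$) through the weak first Green (resp.\ first Betti) identity, check lifting-independence via the distributional definition of $\Delta f$ (resp.\ $\mathcal L_{\lambda,\mu}\mathbf v$) and density of $C_c^\infty$ in $H^1_0$, and then symmetrize and subtract so that the symmetric bilinear forms $\int_\Omega\nabla f\cdot\nabla g\,\mathrm{d}x$ and $\mathcal E(\mathbf u,\mathbf v)=\lambda(\nabla\cdot\mathbf u)(\nabla\cdot\mathbf v)+2\mu\,\nabla^s\mathbf u:\nabla^s\mathbf v$ cancel. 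Your verification that $\nabla\mathbf u:\sigma(\mathbf v)=\mathcal E(\mathbf u,\mathbf v)$ (using the symmetry of the strain tensor) is exactly the point that makes the Lam\'e case work, and your reading of the left-hand side of \eqref{eq:green1} as $\int_\Omega(\mathbf u_1\cdot\mathcal L_{\lambda,\mu}\mathbf v_1-\mathbf v_1\cdot\mathcal L_{\lambda,\mu}\mathbf u_1)\,\mathrm{d}\mathbf{x}$ correctly repairs what is plainly a typo in the paper's statement (as printed, the integrand is identically zero). What the paper's approach buys is brevity and reliance on established literature, where the duality-pairing interpretation of the boundary integrals on a Lipschitz domain is set up once and for all; what your approach buys is self-containedness, and it makes explicit the one genuinely non-routine point, namely that for $H^1_\Delta$ and $H^1_{\mathcal L_{\lambda,\mu}}$ functions the boundary terms only make sense as $H^{-1/2}$--$H^{1/2}$ pairings rather than as surface integrals of functions. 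Your alternative density-based route is also fine in principle, but, as you note, the density of $C^\infty(\overline\Omega)$ in the graph norm on a Lipschitz domain itself requires proof, so the duality argument is the cleaner of the two.
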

\begin{rem}
Lemma \ref{lem:green} is a specific instance of more general results found in \cite[Lemma 3.4]{costabel88} and \cite[Theorem 4.4]{McLean}. Notably, Lemma \ref{lem:green} does not require the
  $H^2$-regularity which is typically needed for the standard Green formula. In particular, for the transmission eigenfunctions $(v,\mathbf u)\in H^1(\Omega)\times H^1(\Omega)^2$ to \eqref{eq:trans3} associated with \eqref{eq:trans3}, we have $v\in H_\Delta^1$ and $\mathbf u\in H^1_{\mathcal L_{\lambda,\mu} }$, thus the Green identities \eqref{eq:GIN1} and \eqref{eq:green1} hold for $v, \mathbf u$. This fact shall be frequently utilized in our subsequent analysis.
\end{rem}

Several integral identities and estimations shall be derived in Lemma \ref{lem:I} to Lemma \ref{lem:estimate}, which will be used to prove Theorem \ref{thm:thm21}.

\begin{lem}\label{lem:I}
Let $\Lambda_h$ be defined in \eqref{eq:gammaEX}. Recall that the CGO solutions $v_0$ and $\mathbf u_0$ are given by \eqref{eq:cgo} and \eqref{eq:u0 cgo}, respectively.
	Denote
	\begin{equation}\label{eq:I lambdah}
		I_{\Lambda_h,1}=\int_{\Lambda_h} (v \partial_\nu v_0-v_0\partial_\nu v)\mathrm d \sigma, \quad I_{\Lambda_h,2}=\int_{\Lambda_h} (\mathbf u \cdot T_\nu \mathbf u_0-\mathbf u_0\cdot T_\nu \mathbf u)\mathrm d \sigma,
	\end{equation}
	where $v \in H^1(\Omega )$ and $u\in H^1(\Omega)^2$ are the acoustoelastic transmission eigenfunctions to \eqref{eq:trans3}. The following integral estimations hold

\begin{subequations}
	\begin{align}
		|I_{\Lambda_h,1}|&\leq \left(\sqrt{1+ 8 s^2}+2 \sqrt{2}s\right)\sqrt{h(\theta_M-\theta_m )} e^{-s h \delta}  \| {v} \|_{H^1(S_h  ) },\label{eq:I arc1} \\
		|I_{\Lambda_h,2 }|&\leq 2 \left[\sqrt{1+ 32 s^2}+ 4 \mu (1+s^2 ) \right]\sqrt{h(\theta_M-\theta_m )} e^{-s h \delta} \| \mathbf{u} \|_{H^1(S_h  ) ^{2} } , \label{eq:I arc2}
	\end{align}
\end{subequations}
	both of which decay exponentially as $s\rightarrow+\infty $. \end{lem}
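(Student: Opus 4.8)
The plan is to reduce both identities to products of $L^2(\Lambda_h)$ norms via the Cauchy--Schwarz inequality, feed the CGO factors through the explicit estimates of Lemma \ref{lem:24 u tu}, and control the eigenfunction factors by their $H^1(S_h)$ norms using the trace theorem. Writing $I_{\Lambda_h,1}=\int_{\Lambda_h} v\,\partial_\nu v_0\,\mathrm{d}\sigma-\int_{\Lambda_h} v_0\,\partial_\nu v\,\mathrm{d}\sigma$ and applying Cauchy--Schwarz to each piece gives
\[
|I_{\Lambda_h,1}|\le \|v\|_{L^2(\Lambda_h)}\,\|\partial_\nu v_0\|_{L^2(\Lambda_h)}+\|v_0\|_{L^2(\Lambda_h)}\,\|\partial_\nu v\|_{L^2(\Lambda_h)},
\]
and the analogous split for $I_{\Lambda_h,2}$ produces $\|\mathbf u\|_{L^2(\Lambda_h)}\,\|T_\nu\mathbf u_0\|_{L^2(\Lambda_h)}+\|\mathbf u_0\|_{L^2(\Lambda_h)}\,\|T_\nu\mathbf u\|_{L^2(\Lambda_h)}$.

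For the CGO factors I would simply invoke Lemma \ref{lem:24 u tu}, using $\|\partial_\nu v_0\|_{L^2(\Lambda_h)}\le\|\nabla v_0\|_{L^2(\Lambda_h)}$ (since $|\nu|=1$) bounded by \eqref{eq:p v0}, $\|v_0\|_{L^2(\Lambda_h)}\le\|v_0\|_{H^1(\Lambda_h)}$ by \eqref{eq:v H1}, $\|\mathbf u_0\|_{L^2(\Lambda_h)}\le\|\mathbf u_0\|_{H^1(\Lambda_h)}$ by \eqref{eq:u0 l2}, and $\|T_\nu\mathbf u_0\|_{L^2(\Lambda_h)}$ by \eqref{eq:Tu L2}. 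Each of these carries the common geometric factor $\sqrt{h(\theta_M-\theta_m)}\,e^{-sh\delta}$ together with the explicit $s$-coefficients $2\sqrt{2}\,s$, $\sqrt{1+8s^2}$, $2\sqrt{1+32s^2}$, and $8\mu(1+s^2)$, respectively. Collecting the common factor and adding the two coefficients then reproduces $(\sqrt{1+8s^2}+2\sqrt{2}\,s)$ in \eqref{eq:I arc1} and, after extracting a factor $2$, the coefficient $2[\sqrt{1+32s^2}+4\mu(1+s^2)]$ in \eqref{eq:I arc2}; the exponential decay as $s\to+\infty$ is then immediate.

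The remaining and genuinely delicate step is to bound the eigenfunction traces $\|v\|_{L^2(\Lambda_h)}$, $\|\partial_\nu v\|_{L^2(\Lambda_h)}$, $\|\mathbf u\|_{L^2(\Lambda_h)}$, $\|T_\nu\mathbf u\|_{L^2(\Lambda_h)}$ by $\|v\|_{H^1(S_h)}$ and $\|\mathbf u\|_{H^1(S_h)^2}$. The zeroth-order traces $\|v\|_{L^2(\Lambda_h)}$ and $\|\mathbf u\|_{L^2(\Lambda_h)}$ follow directly from the trace theorem. The first-order boundary quantities $\partial_\nu v$ and $T_\nu\mathbf u$ are the subtle point, since a priori $v,\mathbf u\in H^1$ only, so these objects are not delivered by the plain trace theorem. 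Here I would exploit that $\Lambda_h=W\cap\partial B_h$ is an interior arc lying at a fixed positive distance from both the corner vertex $\mathbf{0}$ and $\partial\Omega$: interior elliptic regularity for the Helmholtz and Lam\'e equations in \eqref{eq:trans3}, together with the fact that the zeroth-order terms $\omega^2\rho_b\kappa^{-1}v$ and $\omega^2\rho_e\mathbf u$ lie in $L^2$, upgrades $v,\mathbf u$ to $H^2$ in a neighborhood of $\Lambda_h$, with the $H^2$ norms controlled by the respective $H^1(S_h)$ norms. Consequently $\partial_\nu v,\,T_\nu\mathbf u\in L^2(\Lambda_h)$ and their $L^2(\Lambda_h)$ norms are dominated by $\|v\|_{H^1(S_h)}$ and $\|\mathbf u\|_{H^1(S_h)^2}$.

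The main obstacle is precisely this trace control of the first-order quantities $\partial_\nu v$ and $T_\nu\mathbf u$; once the interior-regularity-plus-trace argument is secured, the rest is a bookkeeping exercise of matching the explicit $s$-dependent constants supplied by Lemma \ref{lem:24 u tu}.
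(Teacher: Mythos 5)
Your reduction via Cauchy--Schwarz and the CGO estimates of Lemma \ref{lem:24 u tu} is fine for the two terms in which the derivative falls on the CGO solution, and your bookkeeping of the $s$-dependent coefficients matches the claimed bounds. The gap is exactly in the step you flag as delicate: you propose to bound $\|\partial_\nu v\|_{L^2(\Lambda_h)}$ and $\|T_\nu\mathbf{u}\|_{L^2(\Lambda_h)}$ by \emph{interior} elliptic regularity, on the grounds that $\Lambda_h$ ``lies at a fixed positive distance from both the corner vertex $\mathbf 0$ and $\partial\Omega$''. That geometric claim is false: $\Lambda_h=W\cap\partial B_h$ is the arc closing off the truncated sector $S_h$, and its two endpoints $h(\cos\theta_m,\sin\theta_m)$ and $h(\cos\theta_M,\sin\theta_M)$ lie on $\Gamma^-$ and $\Gamma^+$ respectively, hence on $\partial\Omega$ (the standing assumption is $\Gamma_h^\pm\Subset\Gamma\subset\partial\Omega$). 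So $\mathrm{dist}(\Lambda_h,\partial\Omega)=0$, the interior $H^2$ estimate degenerates as one approaches the endpoints of the arc, and you obtain no $L^2(\Lambda_h)$ control of the first-order traces there. A priori, for $v,\mathbf u\in H^1$ with $L^2$ elliptic data these quantities need not even belong to $L^2(\Lambda_h)$; what the equations guarantee (via the Costabel/McLean results behind Lemma \ref{lem:green}) is only a conormal trace in $H^{-1/2}(\partial S_h)$.

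The paper's proof sidesteps this entirely: for the terms $\int_{\Lambda_h}v_0\,\partial_\nu v\,\mathrm d\sigma$ and $\int_{\Lambda_h}\mathbf u_0\cdot T_\nu\mathbf u\,\mathrm d\sigma$ it uses the $H^{-1/2}$--$H^{1/2}$ duality pairing, e.g.\ $\bigl|\int_{\Lambda_h}\mathbf u_0\cdot T_\nu\mathbf u\,\mathrm d\sigma\bigr|\le\|T_\nu\mathbf u\|_{H^{-1/2}(\Lambda_h)^2}\,\|\mathbf u_0\|_{H^{1/2}(\Lambda_h)^2}$, then bounds $\|T_\nu\mathbf u\|_{H^{-1/2}}\lesssim\|\mathbf u\|_{H^1(S_h)^2}$ by the weak conormal trace estimate (valid since $\mathcal L_{\lambda,\mu}\mathbf u=-\omega^2\rho_e\mathbf u\in L^2$) and $\|\mathbf u_0\|_{H^{1/2}(\Lambda_h)}\le\|\mathbf u_0\|_{H^1(\Lambda_h)}$, which is precisely the quantity Lemma \ref{lem:24 u tu} controls; only the terms with $T_\nu\mathbf u_0$ and $\partial_\nu v_0$ are handled by plain Cauchy--Schwarz and the trace theorem, as you do. If you insist on your route, you would have to replace interior regularity by boundary regularity up to $\Gamma_h^\pm$ near the arc's endpoints, using the transmission conditions to see that $v$ has Neumann data $\rho_b\omega^2\,\mathbf u\cdot\nu\in H^{1/2}$ and $\mathbf u$ has traction data $-v\nu\in H^{1/2}$ there; this can be made to work, but it produces estimates in which $\|v\|_{H^1(S_h)}$ and $\|\mathbf u\|_{H^1(S_h)^2}$ both appear on the right-hand side of each bound, with regularity constants, rather than the separated explicit bounds stated in the lemma. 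The duality argument is the intended and far simpler fix.
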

\begin{proof}

	By using the H{\"o}lder inequality and the trace theorem, one has
\begin{align}
|I_{\Lambda_h,2 }| &\leq  \|(T_{\nu}\mathbf u\|_{H^{-1/2} ( \Lambda_{h} ) ^{2}} \|\mathbf{u}_0 \|_{ H^{1/2}(\Lambda_{h}) ^{2}}+ \| T_{\nu} \mathbf{u}_0 \|_{L^2(\Lambda_h ) ^{2}} \| \mathbf{u} \|_{L^2(\Lambda_h ) ^{2}} \notag \\
&\leq \left(  \|\mathbf{u}_0 \|_{ H^{1}(\Lambda_{h}) ^{2}}  +  \| T_{\nu}\mathbf{u}_0 \|_{L^2(\Lambda_h ) ^{2}}\right) \| \mathbf{u} \|_{H^1(S_h  ) ^{2} } .\label{eq:230 bound}
\end{align}
Substituting \eqref{eq:u0 l2} and \eqref{eq:Tu L2} into \eqref{eq:230 bound}, it's clearly to obtain \eqref{eq:I arc2}. Furthermore, one can deduce \eqref{eq:I arc1} in a similar way.
\end{proof}

To investigate the geometrical singular behavior of transmission eigenfunctions to  \eqref{eq:trans3} near a corner, we need to utilize the CGO solutions to derive an integral identity \eqref{eq:int1 imp} in the following lemma. 

\begin{lem}\label{lem:28 int}
	Let $( v, \mathbf u)\in  H^1(\Omega) \times H^1(\Omega)^2 $ be the transmission eigenfunction to \eqref{eq:trans2} satisfying \eqref{eq:trans3} and under necessary rigid motions $\Omega$ is a bounded Lipschitz domain  such that $\mathbf 0\in \partial \Omega$ and  $\Omega\cap B_h=\Omega\cap W=S_h$, where $W$ and $S_h$ are defined in \eqref{eq:W} and \eqref{eq:Sh}. Suppose that $v_0$ and $\mathbf u_0$ are defined in \eqref{eq:cgo} and \eqref{eq:u0 cgo}. Let $I_{\Lambda_h,1}$ and $I_{\Lambda_h,2}$ be defined in \eqref{eq:I lambdah}.  Denote
	\begin{equation}\label{eq:int def}
		\begin{split}
			I_1^\pm &=\int_{\Gamma^\pm_h} v_0 (\mathbf u \cdot \nu) \rmd \sigma,\quad I_2^\pm = \int_{\Gamma_h^\pm  }v_0 (\mathbf u \cdot \mathbf p ) \rmd \sigma,\quad I_3^\pm =\int_{\Gamma_h^\pm  }v_0 (\mathbf u \cdot \boldsymbol{\rho } ) \rmd \sigma,\\
			I_4&=\int_{S_h}\kappa^{-1} v v_0\rmd \mathbf x,\quad\quad I_5=\int_{S_h}(\rho_e \mathbf u \cdot \mathbf p) v_0\rmd \mathbf x.
 		\end{split}
	\end{equation}
	Then the following integral identity holds
	\begin{equation}\label{eq:int1 imp}
		\begin{split}
			&\frac{-e^{\mathrm i (\phi-\theta_M ) } \omega^2 \rho_b }{s^2 } \left(  I_1^+ +I_1^- -\frac{1}{\omega^2 \rho_b }I_{\Lambda_h,1}+  I_4\right)+2\mu \Bigg( I_2^+ - e^{\mathrm i (\theta_m-\theta_M )} I_2^-\Bigg) \\
			&\quad +\frac{1}{\bsi s e^{\bsi (\theta_M-\phi )}} I_{\Lambda_h,2} -\frac{\omega^2 }{\bsi s e^{\bsi (\theta_M-\phi )}}	I_5=0.
		\end{split}
	\end{equation}
	
\end{lem}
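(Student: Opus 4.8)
The plan is to obtain \eqref{eq:int1 imp} by applying the two Green formulas of Lemma~\ref{lem:green} on the corner domain $S_h$, whose boundary splits as $\partial S_h=\Gamma_h^+\cup\Gamma_h^-\cup\Lambda_h$, and then taking a judicious linear combination that cancels all the residual boundary terms on $\Gamma_h^\pm$ carrying the product $vv_0$. Throughout I abbreviate $J^\pm:=\int_{\Gamma_h^\pm}vv_0\,\rmd\sigma$.

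First I would apply the scalar identity \eqref{eq:GIN1} to $v$ and $v_0$ on $S_h$. Since $\Delta v_0=0$ by \eqref{eq:v0 delta} and $\Delta v=-\omega^2\rho_b\kappa^{-1}v$ by \eqref{eq:trans3}, the volume term collapses to $-\omega^2\rho_b I_4$. On each edge $\Gamma_h^\pm$ I substitute the transmission condition $\partial_\nu v=\rho_b\omega^2\,\mathbf u\cdot\nu$ and the elementary fact $\partial_\nu v_0=(\brho\cdot\nu)v_0$, producing $\rho_b\omega^2 I_1^\pm$ together with a leftover term $-(\brho\cdot\nu)J^\pm$; the arc $\Lambda_h$ contributes $-I_{\Lambda_h,1}$ by the definition \eqref{eq:I lambdah}. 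Inserting the values of $\brho\cdot\nu_M$ and $\brho\cdot\nu_m$ from \eqref{eq:rho 220a} gives a first scalar identity relating $I_1^\pm, I_4, I_{\Lambda_h,1}$ and the $J^\pm$.

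Next I would apply the elastic identity \eqref{eq:green1} to $\mathbf u$ and $\mathbf u_0=\mathbf p\,v_0$. Using $\mathcal L_{\lambda,\mu}\mathbf u_0=\mathbf 0$ from \eqref{eq:u0 lame} and $\mathcal L_{\lambda,\mu}\mathbf u=-\omega^2\rho_e\mathbf u$, the volume term becomes $-\omega^2 I_5$. On $\Gamma_h^\pm$ the traction condition $T_\nu\mathbf u=-v\nu$ turns $T_\nu\mathbf u\cdot\mathbf u_0$ into $-(\nu\cdot\mathbf p)vv_0$, while the explicit formula \eqref{eq:Tu} expands $T_\nu\mathbf u_0\cdot\mathbf u$ into $\mu[(\brho\cdot\nu)I_2^\pm+(\mathbf p\cdot\nu)I_3^\pm]$-type contributions; the arc gives $-I_{\Lambda_h,2}$. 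The crucial simplification here is the relation $\brho=\mathrm i s\,\mathbf p$ from \eqref{eq:248 rho p}, which yields $I_3^\pm=\mathrm i s\,I_2^\pm$ and collapses the pair of traction terms into a single $2\mu\,\mathrm i s\,(\mathbf p\cdot\nu)I_2^\pm$. Plugging in $\mathbf p\cdot\nu_M$ and $\mathbf p\cdot\nu_m$ from \eqref{eq:rho 220c} produces a second identity involving $I_2^\pm, I_5, I_{\Lambda_h,2}$ and the same $J^\pm$.

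The key observation that drives the combination is that $\brho\cdot\nu=\mathrm i s\,(\mathbf p\cdot\nu)$ holds on \emph{both} edges (again \eqref{eq:248 rho p}), so the $J^\pm$-terms in the two identities are proportional with the one common factor $\mathrm i s$ on $\Gamma_h^+$ and $\Gamma_h^-$. Consequently, adding $\mathrm i s$ times the second identity to the first eliminates $J^+$ and $J^-$ simultaneously, leaving an expression in $I_1^\pm, I_2^\pm, I_4, I_5, I_{\Lambda_h,1}, I_{\Lambda_h,2}$ only; multiplying through by the normalizing factor $-e^{\mathrm i(\phi-\theta_M)}/s^2$ and simplifying the exponentials via $\mathrm i^{-1}=-\mathrm i$ and $e^{\mathrm i(\phi-\theta_M)}e^{\mathrm i(\theta_M-\phi)}=1$ delivers exactly \eqref{eq:int1 imp}. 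I expect the main obstacle to be purely a matter of careful bookkeeping: correctly assigning the outward normals $\nu_M,\nu_m$ to the two edges, tracking every sign introduced by the orientation of $\Lambda_h$ and by the boundary conditions of \eqref{eq:trans3}, and confirming that the $J^\pm$ cancellation really does hold on both edges with the \emph{single} multiplier $\mathrm i s$ — a fact that ultimately rests on the edge-independent relation \eqref{eq:248 rho p}.
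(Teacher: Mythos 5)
Your proposal is correct and takes essentially the same route as the paper's proof: both apply the two Green identities of Lemma~\ref{lem:green} on $S_h$ with the CGO pair $(v_0,\mathbf u_0)$, use the transmission conditions of \eqref{eq:trans3} on $\Gamma_h^\pm$, cancel the residual $\int_{\Gamma_h^\pm} v v_0\,\rmd\sigma$ terms via the edge-independent relation $\brho=\mathrm i s\,\mathbf p$ from \eqref{eq:248 rho p} (the paper packages this as the ratio identity \eqref{eq:240 ratio}), collapse $I_3^\pm$ into $I_2^\pm$ via \eqref{eq:249 I2I3}, and normalize to reach \eqref{eq:int1 imp}. The only differences are cosmetic — the paper divides each identity by $\brho\cdot\nu_M$ and $\mathbf p\cdot\nu_M$ before subtracting, whereas you combine first with the multiplier $\mathrm i s$ and normalize by $-e^{\mathrm i(\phi-\theta_M)}/s^2$ at the end (whether that multiplier enters with a plus or minus sign depends on which side of each identity you place the terms, which is precisely the bookkeeping you flagged).
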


\begin{proof}
	According to \eqref{eq:GIN1}, using \eqref{eq:v0 delta} and the boundary condition in \eqref{eq:trans3},  one has
	\begin{align}\label{eq:int 233}
	\omega^2 \rho_b 	\int_{S_h}\kappa^{-1} v v_0\rmd \mathbf x=\int_{\Gamma_h^\pm } (v\partial_\nu v_0-\omega^2 \rho_b v_0 (\mathbf u \cdot \nu) )\rmd \sigma+I_{\Lambda_h,1},
	\end{align}
	where $I_{\Lambda_h,1}$ is defined in \eqref{eq:I lambdah}. Due to \eqref{eq:part v0}, one has
	\begin{equation}\label{eq:234 part}
		\partial_\nu v_0 \big|_{\Gamma_h^+}=(\brho \cdot \nu_M )v_0(\mathbf x),\quad \partial_\nu v_0 \big|_{\Gamma_h^-}=(\brho \cdot \nu_m )v_0(\mathbf x),
	\end{equation}
	where $\nu_M$ and $\nu_m$ are given by \eqref{eq:nu pm}. Substituting \eqref{eq:234 part} into \eqref{eq:int 233} and after some calculations, it yields that
	\begin{align}
		0=&\int_{\Gamma_h^+ } v v_0 \rmd \sigma +\frac{\brho \cdot \nu_m}{\brho\cdot \nu_M}\int_{\Gamma_h^- } v v_0 \rmd \sigma -\frac{1}{\brho \cdot \nu_M}\left(\omega^2 \rho_b\int_{\Gamma^\pm_h} v_0 (\mathbf u \cdot \nu) \rmd \sigma-I_{\Lambda_h,1} \right.\notag \\
		&\quad \left. +\omega^2 \rho_b 	\int_{S_h}\kappa^{-1} v v_0\rmd \mathbf x\right). \label{eq:int 236 int}
	\end{align}
	
	Similarly, by virtue of \eqref{eq:u0 lame}, \eqref{eq:green1} and  the boundary condition in \eqref{eq:trans3}, we can deduce that
	\begin{align}\label{eq:int u u_0}
		\omega^2 	\int_{S_h} \rho_e \mathbf u \cdot \mathbf u_0\rmd \mathbf x=\int_{\Gamma_h^\pm } (\mathbf u \cdot T_\nu \mathbf u_0+  v  (\mathbf u_0 \cdot \nu) )\rmd \sigma+I_{\Lambda_h,2},
	\end{align}
	where $I_{\Lambda_h,2}$ is defined in \eqref{eq:I lambdah}. By virtue of \eqref{eq:u0 cgo} and  \eqref{eq:Tu}, one has
	\begin{equation}\label{eq:Tu 238}
		\begin{split}
		\mathbf u_0\cdot \nu_M=(\mathbf p \cdot \nu_M) v_0 , \quad	T_\nu \mathbf u_0 |_{\Gamma_h^+}&=\mu [ ( \brho \cdot \nu_M )\mathbf p+(\mathbf p \cdot \nu_M) \brho ] v_0(\mathbf x),\\
		\mathbf u_0\cdot \nu_m=(\mathbf p \cdot \nu_m) v_0 ,\quad	 T_\nu \mathbf u_0 |_{\Gamma_h^-}&=\mu [ ( \brho \cdot \nu_m )\mathbf p+(\mathbf p \cdot \nu_m) \brho ] v_0(\mathbf x),
		\end{split}
	\end{equation}
	Substituting \eqref{eq:Tu 238} into \eqref{eq:int u u_0}, after some algebraic calculations, it yields that
	\begin{align}
		0=&\int_{\Gamma_h^+ } v v_0 \rmd \sigma +\frac{\mathbf p \cdot \nu_m}{\mathbf p \cdot \nu_M}\int_{\Gamma_h^- } v v_0 \rmd \sigma +\frac{\mu }{\mathbf p \cdot \nu_M}\Bigg ( ( \brho \cdot \nu_M )\int_{\Gamma^+_h} v_0 (\mathbf u \cdot \mathbf p) \rmd \sigma \notag \\ &+( \brho \cdot \nu_m )\int_{\Gamma^-_h} v_0 (\mathbf u \cdot \mathbf p) \rmd \sigma 
		+(\mathbf p \cdot \nu_M)\int_{\Gamma_h^+ } v_0(\mathbf u \cdot \brho)\rmd \sigma  +(\mathbf p \cdot \nu_m)\int_{\Gamma_h^- } v_0(\mathbf u \cdot \brho)\rmd \sigma \notag\\
		&+\mu^{-1}I_{\Lambda_h,2}   -\mu^{-1}\omega^2 	\int_{S_h}(\rho_e \mathbf u \cdot \mathbf p) v_0\rmd \mathbf x\Bigg ). \label{eq:int 239 int}
	\end{align}
	
	In view of \eqref{eq:rho 220a} and \eqref{eq:rho 220c}, we know that
	\begin{equation}\label{eq:240 ratio}
		\frac{\boldsymbol{\rho} \cdot \nu_m }{\boldsymbol{\rho} \cdot \nu_M}=\frac{\mathbf {p} \cdot \nu_m }{\mathbf {p} \cdot \nu_M}=-e^{\mathrm i(\theta_m-\theta_M ) }.
	\end{equation}
Multiplying $-1$ on both sides of \eqref{eq:int 236 int}, then adding it to \eqref{eq:int 239 int}, by virtue of \eqref{eq:240 ratio}, we can deduce that
\begin{align}
&\frac{1}{\brho \cdot \nu_M}\left(	\omega^2 \rho_b\int_{\Gamma^\pm_h} v_0 (\mathbf u \cdot \nu) \rmd \sigma-I_{\Lambda_h,1}+\omega^2 \rho_b 	\int_{S_h}\kappa^{-1} v v_0\rmd \mathbf x\right)+\frac{\mu }{\mathbf p \cdot \nu_M}\Bigg ( ( \brho \cdot \nu_M )\notag \\
&\times \int_{\Gamma^+_h} v_0 (\mathbf u \cdot \mathbf p) \rmd \sigma +( \brho \cdot \nu_m )\int_{\Gamma^-_h} v_0 (\mathbf u \cdot \mathbf p) \rmd \sigma +(\mathbf p \cdot \nu_M)\int_{\Gamma_h^+ } v_0(\mathbf u \cdot \brho)\rmd \sigma  +(\mathbf p \cdot \nu_m) \notag \\
&\times \int_{\Gamma_h^- } v_0(\mathbf u \cdot \brho)\rmd \sigma +\mu^{-1}I_{\Lambda_h,2} -\mu^{-1}\omega^2 	\int_{S_h}(\rho_e \mathbf u \cdot \mathbf p) v_0\rmd \mathbf x\Bigg )=0. \label{eq:241 int}
\end{align}
Multiplying $\dfrac{\mathbf p \cdot \nu_M }{ \boldsymbol{\rho} \cdot \nu_M}$ on both sides of \eqref{eq:241 int}, after some calculations, we have
\begin{align}
&\frac{\mathbf p \cdot \nu_M}{(\brho \cdot \nu_M)^2}\left(	\omega^2 \rho_b\int_{\Gamma^\pm_h} v_0 (\mathbf u \cdot \nu) \rmd \sigma-I_{\Lambda_h,1}+\omega^2 \rho_b 	\int_{S_h}\kappa^{-1} v v_0\rmd \mathbf x\right)+\mu \Bigg (  \int_{\Gamma^+_h} v_0 (\mathbf u \cdot \mathbf p) \rmd \sigma\notag \\
&+ \frac{ \brho \cdot \nu_m }{  \brho \cdot \nu_M }\int_{\Gamma^-_h} v_0 (\mathbf u \cdot \mathbf p) \rmd \sigma +\frac{\mathbf p \cdot \nu_M}{ \brho \cdot \nu_M }\int_{\Gamma_h^+ } v_0(\mathbf u \cdot \brho)\rmd \sigma  +\frac{\mathbf p \cdot \nu_m}{  \brho \cdot \nu_M } \int_{\Gamma_h^- } v_0(\mathbf u \cdot \brho)\rmd \sigma \Bigg )\notag \\
&+ \frac{1}{\brho \cdot \nu_M} I_{\Lambda_h,2} -\frac{\omega^2 }{\brho \cdot \nu_M}	\int_{S_h}(\rho_e \mathbf u \cdot \mathbf p) v_0\rmd \mathbf x=0. \label{eq:242 int}
\end{align}
In view of \eqref{eq:rho 220a} and \eqref{eq:rho 220c}, it can be directly to verify that

		\begin{alignat}{2}
		\frac{\mathbf p \cdot \nu_M}{(\brho \cdot \nu_M)^2}&=\frac{-e^{\mathrm i (\phi-\theta_M ) }}{s^2 },&\hspace{10pt} \frac{ \brho \cdot \nu_m }{  \brho \cdot \nu_M }&=-e^{\mathrm i (\theta_m-\theta_M ) },\nonumber \\
		\frac{\mathbf p \cdot \nu_m}{  \brho \cdot \nu_M }&=-\frac{e^{\mathrm i(\theta_m-\theta_M )}}{\mathrm i s }, &\hspace{10 pt} \frac{\mathbf p \cdot \nu_M}{  \brho \cdot \nu_M }&=\frac{1}{\mathrm i s }\label{eq:243 frac}.
	\end{alignat}
Substituting \eqref{eq:243 frac} into \eqref{eq:242 int}, we can obtain
\begin{equation}\label{eq:int1 imp pre}
		\begin{split}
			&\frac{-e^{\mathrm i (\phi-\theta_M ) } \omega^2 \rho_b }{s^2 } \left(  I_1^+ +I_1^- -\frac{1}{\omega^2 \rho_b }I_{\Lambda_h,1}+  I_4\right)+\mu \Bigg( I_2^+ - e^{\mathrm i (\theta_m-\theta_M )} I_2^-+\frac{1}{\mathrm i s} I_3^+ \\
			&\quad -\frac{e^{\mathrm i(\theta_m-\theta_M )}}{\mathrm i s } I_3^-\Bigg)+\frac{1}{\bsi s e^{\bsi (\theta_M-\phi )}} I_{\Lambda_h,2} -\frac{\omega^2 }{\bsi s e^{\bsi (\theta_M-\phi )}}	I_5=0,
		\end{split}
	\end{equation}
	where $I_\ell^\pm $ is defined in \eqref{eq:int def}, $\ell=1,\ldots,5$. Recall that $\boldsymbol{\rho} $ and $\mathbf p$ are defined in  \eqref{eq:cgo} and \eqref{eq:u0 cgo}, respectively. By \eqref{eq:248 rho p}, it yields that
	\begin{equation}\label{eq:249 I2I3}
		\frac{I_3^\pm }{\mathrm  i s} = I_2^\pm.
	\end{equation}
	Substituting \eqref{eq:249 I2I3} into \eqref{eq:int1 imp pre}, we derive  \eqref{eq:int1 imp} by performing straightforward calculations.
\end{proof}


Under certain regularity assumptions of the AE transmission eigenfunction ($v,\mathbf u$) to \eqref{eq:trans3} near a corner, Lemma \ref{lem:alpha} and Lemma \ref{lem:estimate} provide crucial estimates for integrals in  \eqref{eq:int1 imp}, which will be utilized in the proof of Theorem \ref{thm:thm21}.

\begin{lem}\label{lem:alpha}
	Suppose that the transmission eigenfunction $( v, \mathbf u)\in  H^1(\Omega) \times H^1(\Omega)^2 $ to \eqref{eq:trans2}  satisfies \eqref{eq:trans3},  $\mathbf u =(u_\ell)_{\ell=1}^2\in  C^{1,\alpha_1}({\overline{S_h} })^2 $,  and $ v \in C^{\alpha_2} (\overline{ S_h} )$ for $\alpha_1,\,\alpha_2 \in (0,1)$. 
Then we have
\begin{align}
	I_4&=\kappa^{-1} v(\mathbf 0) \frac{\bsi e^{2\bsi
		\phi }}{ 2 s^{2}}\left(e^{-2\bsi \theta_M} - e^{-2\bsi \theta_m}\right)+	\Oh\left(\frac{1}{s^{2+\alpha_2}}\right),\label{eq:I4 est} \\
I_5&=\rho_e (\mathbf u(\mathbf 0 ) \cdot \mathbf p) \frac{\bsi e^{2\bsi
		\phi }}{ 2 s^{2}}\left(e^{-2\bsi \theta_M} - e^{-2\bsi \theta_m}\right)+	\Oh\left(\frac{1}{s^{2+\alpha_1}}\right),\label{eq:I5 est}
	\end{align}
as $s\rightarrow +\infty.  $ 
\end{lem}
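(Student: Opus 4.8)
The plan is to estimate $I_4$ and $I_5$ by isolating the value of the slowly varying factor at the corner vertex $\mathbf 0$, showing that the resulting constant multiple of $\int_{S_h}v_0\,\rmd\mathbf x$ produces the claimed leading term while the Hölder remainder contributes only the stated higher-order decay. Concretely, for $I_4$ I would write (using that $\kappa$ is constant)
\[
I_4=\kappa^{-1}v(\mathbf 0)\int_{S_h}v_0\,\rmd\mathbf x+\kappa^{-1}\int_{S_h}\big(v(\mathbf x)-v(\mathbf 0)\big)v_0\,\rmd\mathbf x .
\]
The first term is evaluated directly by \eqref{eq:v0sh}, which supplies exactly the factor $\tfrac{\bsi e^{2\bsi\phi}}{2s^2}(e^{-2\bsi\theta_M}-e^{-2\bsi\theta_m})$ together with an $\Oh(e^{-s\delta h/2})$ error.

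For the remainder I would invoke the Hölder continuity $v\in C^{\alpha_2}(\overline{S_h})$, so that $|v(\mathbf x)-v(\mathbf 0)|\le C|\mathbf x|^{\alpha_2}$ with $C$ the Hölder seminorm of $v$ (independent of $s$). Then
\[
\Big|\int_{S_h}\big(v-v(\mathbf 0)\big)v_0\,\rmd\mathbf x\Big|\le C\int_{S_h}|v_0|\,|\mathbf x|^{\alpha_2}\,\rmd\mathbf x ,
\]
and the right-hand side is controlled by \eqref{eq:v0 alpha} with $\alpha=\alpha_2$, giving $\Oh(s^{-(2+\alpha_2)})$. Since the exponentially small term coming from \eqref{eq:v0sh} decays faster than $s^{-(2+\alpha_2)}$, it is absorbed into this error, which yields \eqref{eq:I4 est}.

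The treatment of $I_5$ is entirely parallel. Because $\mathbf p$ is a fixed ($s$-independent) vector with $|\mathbf p|\le 2$ by \eqref{eq:p ineq}, and $\rho_e$ is constant in this setting, I would split $I_5=\rho_e(\mathbf u(\mathbf 0)\cdot\mathbf p)\int_{S_h}v_0\,\rmd\mathbf x+\rho_e\int_{S_h}\big((\mathbf u(\mathbf x)-\mathbf u(\mathbf 0))\cdot\mathbf p\big)v_0\,\rmd\mathbf x$, again using \eqref{eq:v0sh} for the leading term. For the remainder only the Hölder part of the regularity is needed: since $\mathbf u\in C^{1,\alpha_1}(\overline{S_h})^2\subset C^{\alpha_1}(\overline{S_h})^2$, one has $|(\mathbf u(\mathbf x)-\mathbf u(\mathbf 0))\cdot\mathbf p|\le 2C|\mathbf x|^{\alpha_1}$, and \eqref{eq:v0 alpha} with $\alpha=\alpha_1$ gives $\Oh(s^{-(2+\alpha_1)})$, establishing \eqref{eq:I5 est}.

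There is no serious analytic obstacle here; the lemma is essentially a Hölder expansion of the integrand against the CGO weight. The only points requiring care are bookkeeping ones: first, feeding the exponent $\alpha_2$ (resp. $\alpha_1$) into \eqref{eq:v0 alpha} so that the stated rate is produced uniformly in $s$, rather than the cruder $\Oh(s^{-3})$ that the full $C^{1,\alpha_1}$ regularity would also permit for $I_5$; and second, verifying that all constants (the Hölder seminorms, $|\mathbf p|$, $\kappa^{-1}$, $\rho_e$) are independent of $s$ so the $\Oh$-symbols are genuinely uniform, and that the exponential errors from \eqref{eq:v0sh} are dominated by the polynomial ones.
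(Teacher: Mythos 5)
Your proof is correct and follows essentially the same route as the paper's: freeze the slowly varying factor at the vertex $\mathbf 0$, evaluate the resulting constant multiple of $\int_{S_h}v_0\,\rmd\mathbf x$ by \eqref{eq:v0sh}, and absorb the remainder via \eqref{eq:v0 alpha}; your treatment of $I_4$ is identical to the paper's. For $I_5$ there is a minor difference: the paper expands $\mathbf u$ to first order, $\mathbf u(\mathbf x)=\mathbf u(\mathbf 0)+r\nabla\mathbf u(\mathbf 0)\boldsymbol{\tau}+\delta\mathbf u$ with $|\delta\mathbf u|\le C|\mathbf x|^{1+\alpha_1}$, estimating the gradient term separately as $\Oh(s^{-3})$ and the remainder as $\Oh(s^{-(3+\alpha_1)})$, whereas you use only the H\"older modulus $|\mathbf u(\mathbf x)-\mathbf u(\mathbf 0)|\le C|\mathbf x|^{\alpha_1}$ (legitimate, since $C^{1,\alpha_1}(\overline{S_h})$ embeds into $C^{0,\alpha_1}(\overline{S_h})$ on the bounded convex sector); both routes yield the stated $\Oh\left(s^{-(2+\alpha_1)}\right)$ in \eqref{eq:I5 est}, and both suffice for the downstream use in Theorem \ref{thm:thm21}. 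One small correction to your closing remark: the $\Oh(s^{-3})$ error obtainable from the full $C^{1,\alpha_1}$ regularity is \emph{sharper}, not cruder, than $\Oh\left(s^{-(2+\alpha_1)}\right)$, since $3>2+\alpha_1$ for $\alpha_1\in(0,1)$; this inversion does not affect your argument, because the lemma only claims the weaker rate.
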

\begin{proof}
 Due to $v \in C^{\alpha_2}({\overline{S_h} })$
 and $\mathbf u \in C^{1,\alpha_1}({\overline{S_h} })^2$, we have the expansions
 \begin{equation}\label{eq:f1f2 exp}
 \begin{split}
 	v(\mathbf x)&=v(\mathbf 0) +\delta v,\  |\delta v | \leq \|v\|_{C^{\alpha_2}(\overline{S_h} ) } |\mathbf x|^{\alpha_2},\\
 	\mathbf u(\mathbf x)&=\mathbf u(\mathbf 0) + r \nabla \mathbf u(\mathbf 0)\boldsymbol{\tau}+ \delta \mathbf u,\  |\delta \mathbf u | \leq \|\mathbf u\|_{C^{1,\alpha_1}(\overline{S_h} )^2 } |\mathbf x|^{1+\alpha_1} ,
 	 \end{split}
 \end{equation}
 where $\mathbf x=r\boldsymbol{\tau }\in \mathbb R^2$ with $\boldsymbol{\tau }=(\cos\theta, \sin \theta )$ and $r\in \mathbb R_+\cup \{0\}$.
 Substituting \eqref{eq:f1f2 exp} into the definition of $I_4$ and $I_5$, it arrives at
\begin{align}\label{eq:I45 265}
I_4&=\kappa^{-1} v(\mathbf 0)\int_{S_h} v_0\rmd \mathbf x+ \kappa^{-1} \int_{S_h} \delta v v_0\rmd \mathbf x,\\
I_5&=\rho_e (\mathbf u(\mathbf 0 ) \cdot \mathbf p)\int_{S_h} v_0\rmd \mathbf x+ \rho_e\int_{S_h} r\mathbf p^\top  \nabla \mathbf u (\mathbf 0) \boldsymbol{\tau} \rmd \mathbf x + \rho_e\int_{S_h}(\delta \mathbf u \cdot \mathbf p) v_0\rmd \mathbf x.\notag
\end{align}




By Cauchy-Schwarz inequality, \eqref{eq:f1f2 exp}, and \eqref{eq:v0 alpha}, we have
\begin{align}\label{eq:I45 est 266}
	\left| \int_{S_h} \delta v v_0\rmd \mathbf x \right| &\leq \|v\|_{C^{\alpha_2}(\overline{S_h} ) } \frac{(\theta_M-\theta_m) \Gamma(\alpha_2 +2) }{ (\delta s)^{\alpha_2 +2}} + \Oh(e^{-s\delta h/2} ),\\
	\left| \int_{S_h}( \delta \mathbf  u \cdot \mathbf p) v_0\rmd \mathbf x \right| &\leq 2 \|\mathbf u\|_{C^{1,\alpha_1}(\overline{S_h} )^2 } \frac{(\theta_M-\theta_m) \Gamma(\alpha_1 +3) }{ (\delta s)^{\alpha_1 +3}} + \Oh(e^{-s\delta h/2} ), \notag
\end{align}
as $s\rightarrow +\infty.  $ 
By virtue of \eqref{eq:1r1} and \eqref{eq:v0sh}, one can deduce that
\begin{align}\label{eq:Sv}
		&\int_{S_h} r\mathbf p^\top  \nabla \mathbf u (\mathbf 0) \boldsymbol{\tau} \rmd \mathbf x  = \Oh(\frac{1}{s^3}) ,\notag\\
&	\int_{S_h} v_0\rmd \mathbf x = \int_{\theta_m }^{\theta_M } \int_0^h r e^{r\boldsymbol{\rho} \cdot  \boldsymbol \tau } \rmd r \rmd \theta = \frac{\bsi e^{2\bsi
			\phi }}{ 2 s^{2}}\left(e^{-2\bsi \theta_M} - e^{-2\bsi \theta_m}\right),
	\end{align}
as $s\rightarrow +\infty.  $ 

Substituting \eqref{eq:I45 est 266}, \eqref{eq:Sv} into \eqref{eq:I45 265}, then we obtain \eqref{eq:I4 est}  and \eqref{eq:I5 est} .
\end{proof}

\begin{lem}\label{lem:estimate}
	Suppose that the transmission eigenfunction $( v, \mathbf u)\in  H^1(\Omega) \times H^1(\Omega)^2 $ to \eqref{eq:trans2} satisfying \eqref{eq:trans3},  Moreover, let $\mathbf u$ belong to $ C^{1,\alpha_1}({\overline{S_h} })^2 $ for $\alpha_1 \in (0,1)$. Then the following integral estimations hold 	
	\begin{align}
	I_1^+&= - (\mathbf u(\mathbf 0) \cdot \nu_M)\frac{e^{\mathrm i (\phi-\theta_M )}}{s} 	+ \nu_M ^\top \nabla \mathbf u(\mathbf 0)\boldsymbol{\tau}_M \frac{e^{2\mathrm i (\phi-\theta_M )}}{s^2} +\Oh\left(\frac{1}{s^{2+\alpha_1}}\right) ,\label{eq:I1+ est}\\
	 I_1^-&=- (\mathbf u(\mathbf 0) \cdot \nu_m)\frac{e^{\mathrm i (\phi-\theta_m )}}{s} +\nu_m ^\top \nabla \mathbf u(\mathbf 0)\boldsymbol{\tau}_m \frac{e^{2\mathrm i (\phi-\theta_m )}}{s^2} +	\Oh\left(\frac{1}{s^{2+\alpha_1}}\right), \label{eq:I1- est}\\
		I_2^+&=-(\mathbf u (\mathbf 0)\cdot \mathbf p )\frac{e^{\mathrm i (\phi-\theta_M )}}{s}   + \mathbf p ^\top \nabla \mathbf u(\mathbf 0)\boldsymbol{\tau}_M \frac{e^{2\mathrm i (\phi-\theta_M )}}{s^2} +	\Oh\left(\frac{1}{s^{2+\alpha_1 }}\right),  \label{eq:I2+ est}\\
		I_2^-&=-(\mathbf u (\mathbf 0)\cdot \mathbf p )\frac{e^{\mathrm i (\phi-\theta_m )}}{s}   + \mathbf p ^\top \nabla \mathbf u(\mathbf 0)\boldsymbol{\tau}_m \frac{e^{2\mathrm i (\phi-\theta_m )}}{s^2}   +	\Oh\left(\frac{1}{s^{2 +\alpha_1 }}\right), \label{eq:I2- est}  
			\end{align}
			as $s\rightarrow +\infty$. 
\end{lem}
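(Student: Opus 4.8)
The plan is to evaluate each of the four boundary integrals by parametrizing the corresponding half-line of the corner, inserting the first-order Taylor expansion of $\mathbf u$ at the vertex, and then invoking the one-dimensional integral asymptotics already recorded for the CGO solution $v_0$ in \cref{lem:1}. The four estimates \eqref{eq:I1+ est}--\eqref{eq:I2- est} are structurally identical, so I would carry out $I_1^+$ in full detail and then indicate the routine modifications yielding the remaining three.

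First I would parametrize $\Gamma_h^+$ by $\mathbf x=r\boldsymbol\tau_M$, $r\in(0,h)$, so that $\rmd\sigma=\rmd r$ and, on this segment, the outward normal is the constant vector $\nu_M$ while $v_0(\mathbf x)=e^{r\,\brho\cdot\boldsymbol\tau_M}$ by \eqref{eq:cgo}. Using the $C^{1,\alpha_1}$ expansion $\mathbf u(\mathbf x)=\mathbf u(\mathbf 0)+r\nabla\mathbf u(\mathbf 0)\boldsymbol\tau_M+\delta\mathbf u$ with $|\delta\mathbf u|\le\|\mathbf u\|_{C^{1,\alpha_1}(\overline{S_h})^2}r^{1+\alpha_1}$ (as in \eqref{eq:f1f2 exp}), I would split
\[
I_1^+=(\mathbf u(\mathbf 0)\cdot\nu_M)\!\int_0^h e^{r\,\brho\cdot\boldsymbol\tau_M}\rmd r+(\nu_M^\top\nabla\mathbf u(\mathbf 0)\boldsymbol\tau_M)\!\int_0^h r\,e^{r\,\brho\cdot\boldsymbol\tau_M}\rmd r+\int_0^h e^{r\,\brho\cdot\boldsymbol\tau_M}(\delta\mathbf u\cdot\nu_M)\rmd r.
\]
For the first two integrals I would apply \eqref{eq:v0 gamma} with $\alpha=0$ and $\alpha=1$ respectively, together with the identity $\brho\cdot\boldsymbol\tau_M=s\,e^{\bsi(\theta_M-\phi)}$ from \eqref{eq:rho 220b}; this produces $\Gamma(1)/(-\brho\cdot\boldsymbol\tau_M)=-e^{\bsi(\phi-\theta_M)}/s$ and $\Gamma(2)/(-\brho\cdot\boldsymbol\tau_M)^2=e^{2\bsi(\phi-\theta_M)}/s^2$, which are precisely the two leading coefficients appearing in \eqref{eq:I1+ est}. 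For the remainder I would bound $|\delta\mathbf u\cdot\nu_M|\le\|\mathbf u\|_{C^{1,\alpha_1}(\overline{S_h})^2}r^{1+\alpha_1}$ and apply \eqref{eq:v0 gamma al} with $\alpha=1+\alpha_1$, obtaining a contribution of size $\Gamma(2+\alpha_1)/(\delta s)^{2+\alpha_1}=\Oh(s^{-(2+\alpha_1)})$; since the $\Oh(e^{-s\delta h/2})$ errors emitted by \eqref{eq:v0 gamma} and \eqref{eq:v0 gamma al} are exponentially small, they are absorbed into this algebraic remainder. The estimates \eqref{eq:I1- est}, \eqref{eq:I2+ est}, \eqref{eq:I2- est} then follow verbatim upon replacing $(\boldsymbol\tau_M,\nu_M)$ by $(\boldsymbol\tau_m,\nu_m)$ and/or replacing the test vector $\nu$ by the constant vector $\mathbf p$ in the dot product, using the companion value $\brho\cdot\boldsymbol\tau_m=s\,e^{\bsi(\theta_m-\phi)}$.

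The computation is essentially bookkeeping once the $v_0$-asymptotics of \cref{lem:1} are available, so the only genuine care points are the following two. First, one must track the phases correctly when inverting $-\brho\cdot\boldsymbol\tau$, so that the factors $e^{\bsi(\phi-\theta)}/s$ and $e^{2\bsi(\phi-\theta)}/s^2$ emerge with the correct arguments. Second, and more conceptually, one must keep the linear term $r\,\nabla\mathbf u(\mathbf 0)\boldsymbol\tau$ explicitly at order $s^{-2}$ while verifying that the H\"older remainder $\delta\mathbf u=O(r^{1+\alpha_1})$ is genuinely of strictly lower order $s^{-(2+\alpha_1)}$; this separation is exactly what makes the two-term expansion well defined and is precisely where the $C^{1,\alpha_1}$ hypothesis (rather than mere $C^1$ regularity) is needed.
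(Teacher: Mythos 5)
Your proposal is correct and follows essentially the same route as the paper's proof: the first-order $C^{1,\alpha_1}$ Taylor expansion at the vertex, the splitting of each boundary integral into a constant term, a linear term, and a H\"older remainder, the evaluation of the first two via \eqref{eq:v0 gamma} together with $\brho\cdot\boldsymbol\tau_M = s e^{\bsi(\theta_M-\phi)}$ from \eqref{eq:rho 220b}, and the $\Oh(s^{-(2+\alpha_1)})$ bound on the remainder via \eqref{eq:v0 gamma al}. Your phase bookkeeping ($-e^{\bsi(\phi-\theta_M)}/s$ and $e^{2\bsi(\phi-\theta_M)}/s^2$) matches the paper's \eqref{eq:v0 int} and \eqref{eq:I2 253 est} exactly, so no gap remains.
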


\begin{proof}
According to $\mathbf u \in  C^{1,\alpha_1}({\overline{S_h} })^2 $, one directly obtain that
\begin{equation}\label{eq: u c1}
	\mathbf u(\mathbf x)=\mathbf u(\mathbf 0 )+r \nabla \mathbf u(\mathbf 0)\boldsymbol{\tau}  +\delta \mathbf u, \quad |\delta \mathbf u | \leq \|\mathbf u\|_{C^{1,\alpha _1} (\overline{S_h} )^2 } |\mathbf x|^{1+\alpha_1},  
\end{equation}
where $\mathbf x=r\boldsymbol{\tau }\in \mathbb R^2$ with $\boldsymbol{\tau }=(\cos\theta, \sin \theta )$ and $r\in \mathbb R_+\cup \{0\}$. Substituting \eqref{eq: u c1} into $I_2^\pm $ defined in \eqref{eq:int def}, we have
\begin{align}
	I_2^+ &=(\mathbf u (\mathbf 0)\cdot \mathbf p ) \int_{\Gamma_h^+  }v_0 (\mathbf x) \rmd \sigma + \mathbf p ^\top \nabla \mathbf u(\mathbf 0)\boldsymbol{\tau}_M \int_{0}^h r e^{r \boldsymbol{\rho} \cdot \boldsymbol{ \tau}_M } \rmd r  +	r_{I_2^+},\label{eq:I2+ ex} \\
	I_2^-&=(\mathbf u (\mathbf 0)\cdot \mathbf p ) \int_{\Gamma_h^-  }v_0 (\mathbf x) \rmd \sigma + \mathbf p ^\top \nabla \mathbf u(\mathbf 0)\boldsymbol{\tau}_m \int_{0}^h r e^{r \boldsymbol{\rho} \cdot \boldsymbol{ \tau}_m } \rmd r +	r_{I_2^-}, \label{eq:I2- ex} 
\end{align}
where
\begin{equation*}
	r_{I_2^+}=\int_{\Gamma_h^+ } (\delta \mathbf u \cdot \mathbf p) v_0(\mathbf x) \rmd \sigma, \quad 	r_{I_2^-}=\int_{\Gamma_h^- } (\delta \mathbf u \cdot \mathbf p) v_0(\mathbf x) \rmd \sigma.
\end{equation*}
Using \eqref{eq:v0 gamma} and \eqref{eq:rho 220b}, one can directly has
\begin{equation}\label{eq:v0 int}
		\int_{\Gamma_h^+} v_0(\mathbf x) \rmd \sigma=-\frac{e^{\mathrm i (\phi-\theta_M ) }}{s}+\Oh (e^{-s\delta h/2 } ),\quad \int_{\Gamma_h^-} v_0(\mathbf x) \rmd \sigma=-\frac{e^{\mathrm i (\phi-\theta_m ) }}{s}+\Oh (e^{-s\delta h/2 } ),
	\end{equation}
	as $s \rightarrow +\infty$, where $\phi$ is the polar angle of $\mathbf d$ defined in \eqref{eq:d cond}. Similarly, according to \eqref{eq:v0 gamma} and \eqref{eq:rho 220b}, we know that
\begin{align}\label{eq:I2 253 est}
	\int_0^h r^\ell e^{r \boldsymbol{\rho} \cdot \boldsymbol{\tau}_M } \rmd r&=\frac{\ell !e^{(\ell+1) \mathrm i (\phi-\theta_M )}}{(-s)^{\ell+1}} +\Oh(e^{-sh \delta/2}), \\
	\int_0^h r^\ell e^{r \boldsymbol{\rho} \cdot \boldsymbol{\tau}_m } \rmd r &=\frac{\ell ! e^{(\ell+1)\mathrm i (\phi-\theta_m )}}{(-s)^{\ell+1}} +\Oh(e^{-sh \delta/2}) , \quad \ell \in \mathbb{N} \notag
\end{align}
as $s\rightarrow +\infty$. Using the Cauchy-Schwarz inequality, \eqref{eq:v0 gamma}, and combining \eqref{eq:rho 220b}, \eqref{eq:p ineq} with \eqref{eq: u c1}, one can deduce that
\begin{align}\label{eq:I2 est 254}
	\left|r_{I_2^+}\right| &\leq  2 \|\mathbf u\|_{C^{1,\alpha_1 } (\overline{S_h} )   } \int_0^h r^{1+\alpha_1 } \,e^{r \boldsymbol{\rho} \cdot \boldsymbol{\tau}_M } \rmd r=\frac{2 \|\mathbf u\|_{C^{1, \alpha_1 } (\overline{S_h} )   }  e^{(2+\alpha_1 )\mathrm i(\phi-\theta_M )}}{(-s)^{2+\alpha_1 }} +\Oh(e^{-sh \delta/2}),\\
	\left|r_{I_2^-}\right| &\leq  2 \|\mathbf u\|_{C^{1, \alpha_1 } (\overline{S_h} )   } \int_0^h r^{1+\alpha_1 }\, e^{r \boldsymbol{\rho} \cdot \boldsymbol{\tau}_m } \rmd r=\frac{2 \|\mathbf u\|_{C^{1, \alpha_1 } (\overline{S_h} )   }  e^{(2+\alpha_1)\mathrm i (\phi-\theta_m)}}{(-s)^{2 +\alpha_1 }} +\Oh(e^{-sh \delta/2}), \notag
\end{align}
as $s \rightarrow +\infty $. Substituting \eqref{eq:v0 int},  \eqref{eq:I2 253 est} and \eqref{eq:I2 est 254}  into \eqref{eq:I2+ ex} and  \eqref{eq:I2- ex} respectively, we derive \eqref{eq:I2+ est}  and \eqref{eq:I2- est}.

	Substituting \eqref{eq: u c1} into the expression of $I_1^\pm$ defined in \eqref{eq:int def}, it arrives at
		\begin{align}\label{eq:I1 ex}
		I_1^+&= (\mathbf u(\mathbf 0) \cdot \nu_M) \int_{\Gamma^+_h} v_0 (\mathbf x) \rmd \sigma +\nu_M ^\top \nabla \mathbf u(\mathbf 0)\boldsymbol{\tau}_M \int_{0}^h r e^{r \boldsymbol{\rho} \cdot \boldsymbol{ \tau}_M } \rmd r+ r_{I_1^+},\\
		 I_1^- &= (\mathbf u(\mathbf 0) \cdot \nu_m) \int_{\Gamma^-_h} v_0 (\mathbf x) \rmd \sigma +\nu_m ^\top \nabla \mathbf u(\mathbf 0)\boldsymbol{\tau}_m \int_{0}^h r e^{r \boldsymbol{\rho} \cdot \boldsymbol{ \tau}_m } \rmd r+ r_{I_1^-}, \notag
	\end{align}
	where
	\begin{equation*}
		\begin{split}
			r_{I_1^\pm }=\int_{\Gamma_h^\pm  } v_0(\mathbf x) (\delta \mathbf u \cdot \nu  ) \rmd \sigma.
		\end{split}
	\end{equation*}
By virtue of Cauchy-Schwarz inequality, \eqref{eq:v0 gamma al},  and \eqref{eq: u c1}, we obtain that
	\begin{align}\label{eq:I1 est 260}
	\left|r_{I_1^\pm }\right| &\leq   \|\mathbf u\|_{C^{1,\alpha_1} (\overline{S_h} )   } \int_{\Gamma_h^\pm  } | v_0(\mathbf x)| |\mathbf x|^{1+\alpha_1 }\rmd \sigma\leq \frac{ \Gamma(\alpha_1+2) }{ (\delta s)^{\alpha_1+2}} + \Oh(e^{-s\delta h/2} )
\end{align}
as $s\rightarrow + \infty. $ Substituting \eqref{eq:v0 int} and \eqref{eq:I2 253 est} into \eqref{eq:I1 ex}, and in virtue of \eqref{eq:I1 est 260}, it's sufficient to attain \eqref{eq:I1+ est} and  \eqref{eq:I1- est}.
\end{proof}


\begin{rem}

The regularity assumptions of the transmission eigenfunction $(v,\mathbf u)$ to \eqref{eq:trans3} in Lemma \ref{lem:alpha} and  Lemma \ref{lem:estimate} are crucial for analyzing the vanishing properties of $(v,\mathbf u)$ at a corner, as stated in Theorem \ref{thm:thm21}. In Section \ref{sec:ip}, we shall explore the inverse problem \eqref{eq:so1} related to \eqref{eq:transea1} by utilizing the results from Theorem \ref{thm:thm21}. Specifically, Lemma \ref{lem:alpha} and Lemma \ref{lem:estimate} require that $\mathbf u\in C^{1,\alpha_1}({\overline{S_h} })^2 $ and $ v\in C^{\alpha_2}(\overline{ S_h})$, where $\alpha_1,\,\alpha_2\in (0,1)$. These regularity conditions can be met when addressing the inverse problem \eqref{eq:so1}, as discussed in detail in Theorem \ref{thm:inverse1}.

	

\end{rem}

The lemma below describes the properties of the coefficient matrix in a quadratic form related to a corner. These characteristics are essential for the proof of Theorem \ref{thm:thm21}.

\begin{lem}\label{lem:conclusion}
Assume that $\phi$ is the polar angle of $\mathbf d$ defined in \eqref{eq:cgo}, where $\mathbf d$ fulfills \eqref{eq:d cond}.  Let $\boldsymbol{A}=(a_{ij})_{i,j=1}^2 \in \mathbb R^{2\times 2 }$ satisfying
	\begin{equation}\label{eq:eqn A}
		\mathbf p^\top \boldsymbol{A} \left( e^{2\mathrm i (\phi-\theta_M )} \boldsymbol{\tau}_M - e^{\mathrm i (2\phi-\theta_m - \theta_M )}\boldsymbol{\tau}_m  \right)=0,
	\end{equation}
	where   $\mathbf p$, $\boldsymbol{\tau}_m  $, and $\boldsymbol{\tau}_M$ are defined in \eqref{eq:u0 cgo} and \eqref{eq:taumM} respectively. If $W$ is strictly convex, where $W$ is defined in \eqref{eq:W} with the opening angle $\theta_M-\theta_m$,
	one has
	$$
	a_{11}=a_{22} \mbox{ and } a_{12}+a_{21}=0.
	$$
\end{lem}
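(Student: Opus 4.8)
The plan is to reduce the vector identity \eqref{eq:eqn A} to the single scalar equation $\mathbf e_1^\top \boldsymbol A\,\mathbf e_1=0$, where $\mathbf e_1=(1,\mathrm i)^\top$, and then to read off the two desired relations from its real and imaginary parts. The first and key step is to simplify the bracketed vector. Factoring out the common exponential $e^{\mathrm i(2\phi-\theta_M)}$ gives
\begin{equation*}
e^{2\mathrm i (\phi-\theta_M )} \boldsymbol{\tau}_M - e^{\mathrm i (2\phi-\theta_m - \theta_M )}\boldsymbol{\tau}_m = e^{\mathrm i(2\phi-\theta_M)}\left( e^{-\mathrm i\theta_M}\boldsymbol\tau_M - e^{-\mathrm i\theta_m}\boldsymbol\tau_m\right).
\end{equation*}

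Next I would exploit the complex representation $\boldsymbol\tau_\theta=\tfrac12\left(e^{\mathrm i\theta}\overline{\mathbf e}_1+e^{-\mathrm i\theta}\mathbf e_1\right)$ with $\overline{\mathbf e}_1=(1,-\mathrm i)^\top$, which one verifies directly from $\boldsymbol\tau_\theta=(\cos\theta,\sin\theta)^\top$. This yields $e^{-\mathrm i\theta}\boldsymbol\tau_\theta=\tfrac12(\overline{\mathbf e}_1+e^{-2\mathrm i\theta}\mathbf e_1)$, so the $\overline{\mathbf e}_1$-parts cancel in the difference and
\begin{equation*}
e^{-\mathrm i\theta_M}\boldsymbol\tau_M - e^{-\mathrm i\theta_m}\boldsymbol\tau_m=\tfrac12\left(e^{-2\mathrm i\theta_M}-e^{-2\mathrm i\theta_m}\right)\mathbf e_1.
\end{equation*}
Thus the bracketed vector is a scalar multiple of $\mathbf e_1$, and the scalar $e^{-2\mathrm i\theta_M}-e^{-2\mathrm i\theta_m}$ is nonzero precisely because $W$ is strictly convex, i.e.\ $0<\theta_M-\theta_m<\pi$ forces $\theta_M\not\equiv\theta_m\pmod\pi$.

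Finally I would substitute $\mathbf p=-\mathrm i e^{-\mathrm i\phi}\mathbf e_1$ from \eqref{eq:p eqn}. Collecting all the scalar prefactors, the left-hand side of \eqref{eq:eqn A} becomes a nonzero constant times $\mathbf e_1^\top\boldsymbol A\,\mathbf e_1$, so \eqref{eq:eqn A} is equivalent to $\mathbf e_1^\top\boldsymbol A\,\mathbf e_1=0$. A direct expansion gives
\begin{equation*}
\mathbf e_1^\top\boldsymbol A\,\mathbf e_1=(a_{11}-a_{22})+\mathrm i\,(a_{12}+a_{21}).
\end{equation*}
Since $\boldsymbol A$ is real, the real and imaginary parts vanish independently, yielding $a_{11}=a_{22}$ and $a_{12}+a_{21}=0$. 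The only genuinely delicate point is the telescoping simplification of the tangential-vector difference; once one recognizes that the difference collapses to a multiple of the fixed null vector $\mathbf e_1$ of the quadratic form, the remainder is routine, with strict convexity entering solely to guarantee that the scalar prefactor does not vanish.
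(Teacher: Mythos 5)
Your proof is correct and follows essentially the same route as the paper: both reduce the identity to the single scalar equation $\mathbf e_1^\top\boldsymbol A\,\mathbf e_1=0$ (the paper writes it with the proportional vector $(-\mathrm i,1)^\top=-\mathrm i\,\mathbf e_1$), with strict convexity guaranteeing the nonvanishing of the scalar prefactor, and then split into real and imaginary parts. The only difference is cosmetic: you collapse the tangential difference via the decomposition $\boldsymbol\tau_\theta=\tfrac12\bigl(e^{\mathrm i\theta}\overline{\mathbf e}_1+e^{-\mathrm i\theta}\mathbf e_1\bigr)$, while the paper does the equivalent computation by direct trigonometric identities, obtaining the factor $\sin(\theta_M-\theta_m)$, which equals your prefactor up to a unimodular constant.
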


\begin{proof}
By virtue of \eqref{eq:taumM}, it can be deduce that
\begin{align}\label{eq:270 taum}
	e^{2\mathrm i (\phi-\theta_M )} \boldsymbol{\tau}_M - e^{\mathrm i (2\phi-\theta_m - \theta_M )}\boldsymbol{\tau}_m  &=e^{2\mathrm i (\phi -\theta_M ) } \left(\boldsymbol{\tau}_M-e^{\mathrm i ( \theta_M -\theta_m )}\boldsymbol{\tau}_m \right) \\
	&=e^{2\mathrm i (\phi -\theta_M )-\mathrm i\theta_m } \left(e^{\mathrm i \theta_m  }\boldsymbol{\tau}_M-e^{\mathrm i  \theta_M }\boldsymbol{\tau}_m \right)\notag \\
	&=e^{2\mathrm i (\phi -\theta_M )-\mathrm i\theta_m }\sin(\theta_M-\theta_m )\begin{bmatrix}
		-\mathrm i \\ 1
	\end{bmatrix}. \notag
\end{align}
According to \eqref{eq:d cond} and \eqref{eq:cgo}, after some calculations, one has
\begin{align}\label{eq:271 p}
	\mathbf p= e^{-\mathrm i \phi} \begin{bmatrix}
		-\mathrm i \\ 1
	\end{bmatrix}.
\end{align}
Since $W$ is strictly convex, one has $0<\theta_M-\theta_m< \pi$, which imples that  $\sin(\theta_M-\theta_m )>0$. Therefore, substituting \eqref{eq:270 taum} and \eqref{eq:271 p} into \eqref{eq:eqn A}, it yields that
\begin{align}\label{eq:A 2370}
	\begin{bmatrix}-\mathrm i& 1\end{bmatrix} \boldsymbol{A} \begin{bmatrix}
		-\mathrm i\\ 1
	\end{bmatrix}=0.
\end{align}
Comparing the real and imaginary part of both sides of \eqref{eq:A 2370} separately, we finish the proof.
\end{proof}



Now we are ready to proceed to the proof of our main theorem in this section. 

\begin{thm}\label{thm:thm21}
	Assume that a bounded Lipschitz domain $\Omega\Subset\mathbb{R}^{2}$ contains a corner. Let  $S_h = \Omega\cap W$ with $h\ll 1$, where $S_h \Subset \Omega$ and $W$ is  a sector defined in \eqref{eq:W}. Without loss of generality, by applying rigid motions if necessary, we can assume that the vertex of the corner $S_h$ is at $\mathbf 0 \in \partial \Omega $. Suppose that $\lambda, \mu, \rho_e, \rho_b$ and $\kappa$ are real-valued constants, with $\lambda$ and $\mu$ satisfying the condition in \eqref{eq:cond1}. Let $( v, \mathbf u)\in  H^1(\Omega) \times H^1(\Omega)^2 $ be a pair of transmission eigenfunctions for the problem \eqref{eq:trans2} associated with $\omega \in \mathbb{R}_+$ that satisfies \eqref{eq:trans3}.
  \begin{enumerate}
	\item [(a)] Assume that 
	\begin{equation}\label{eq:thm21 assump}
	\mathbf u \in C^{1,\alpha_1 }({\overline{S_h} })^2 \quad {\mbox  and }\quad  v \in C^{\alpha_2} (\overline{ S_h} )
	\end{equation}
	 for $\alpha_1,\, \alpha_2\in (0,1)$.  
Then we have
\begin{equation}\label{eq:thm21}
\nabla^s \mathbf u (\mathbf 0)=\partial_1 u_1 (\mathbf 0)	I,
\end{equation}
where $\nabla^s \mathbf u$ is the strain tensor of $\mathbf u$ defined in 
\eqref{eq:traction1} and $I$ is the identity matrix;
\item [(b)] Under the assumption in $(a)$, one has
\begin{align}\label{eq:thm21 v0}
	v(\mathbf 0)=-2(\lambda+\mu ) \partial_1 u_1(\mathbf 0 );
\end{align}
\item [(c)] Moreover, under the assumption in $(b)$ and suppose that $\partial_1 u_1(\mathbf{0}) = 0$, then we have
\begin{equation}\label{eq:v0=0}
	v(\mathbf{0})=0;
\end{equation}
\item [(d)]Furthermore, under the assumptions in $(b)$ and $(c)$,if $\partial_2 u_1(\mathbf{0}) = 0$, then it holds that
\begin{equation}\label{eq:gra u0=0}
	\nabla \mathbf{u}(\mathbf{0})=\mathbf{0}.
\end{equation}
\end{enumerate}
\end{thm}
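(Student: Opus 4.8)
The plan is to feed the asymptotic expansions from Lemmas \ref{lem:alpha} and \ref{lem:estimate} into the integral identity \eqref{eq:int1 imp}, treat the two arc integrals $I_{\Lambda_h,1},I_{\Lambda_h,2}$ as exponentially small remainders by Lemma \ref{lem:I}, and then read off the identity order by order in $1/s$. Since \eqref{eq:int1 imp} holds for every large $s>0$ with the direction angle $\phi$ fixed, the coefficient of each power of $1/s$ must vanish independently. The $s^{-2}$ balance will deliver (a), the traction boundary condition evaluated at the vertex will deliver (b), and (c)--(d) will then be purely algebraic.

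For (a) I would first count orders. Because of the $s^{-2}$ prefactor multiplying the first bracket, together with $I_1^\pm=\Oh(s^{-1})$ and $I_4=\Oh(s^{-2})$, the entire first group is $\Oh(s^{-3})$, and likewise the $I_5$-term is $\Oh(s^{-3})$; both are invisible at order $s^{-2}$. Hence the only $s^{-2}$ contribution comes from $2\mu(I_2^+-e^{\mathrm i(\theta_m-\theta_M)}I_2^-)$. Inserting \eqref{eq:I2+ est}--\eqref{eq:I2- est}, the leading $\Oh(s^{-1})$ parts cancel identically (both equal $-(\mathbf u(\mathbf 0)\cdot\mathbf p)e^{\mathrm i(\phi-\theta_M)}/s$, using $e^{\mathrm i(\theta_m-\theta_M)}e^{\mathrm i(\phi-\theta_m)}=e^{\mathrm i(\phi-\theta_M)}$), so this group is genuinely $\Oh(s^{-2})$ with coefficient $2\mu\,\mathbf p^\top\nabla\mathbf u(\mathbf 0)\big(e^{2\mathrm i(\phi-\theta_M)}\boldsymbol\tau_M-e^{\mathrm i(2\phi-\theta_m-\theta_M)}\boldsymbol\tau_m\big)$. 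Multiplying \eqref{eq:int1 imp} by $s^2$ and letting $s\to+\infty$ forces this coefficient to vanish, and since $\mu\neq0$ the resulting relation is exactly the hypothesis \eqref{eq:eqn A} of Lemma \ref{lem:conclusion} with $\boldsymbol A=\nabla\mathbf u(\mathbf 0)$. Strict convexity of $W$ then yields $a_{11}=a_{22}$ and $a_{12}+a_{21}=0$, i.e. $\partial_1 u_1(\mathbf 0)=\partial_2 u_2(\mathbf 0)$ and $\partial_2 u_1(\mathbf 0)+\partial_1 u_2(\mathbf 0)=0$, which is precisely $\nabla^s\mathbf u(\mathbf 0)=\partial_1 u_1(\mathbf 0)I$ in \eqref{eq:thm21}.

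For (b) I would evaluate the traction condition in \eqref{eq:trans3} at the vertex. Since $\mathbf u\in C^{1,\alpha_1}(\overline{S_h})^2$ and $v\in C^{\alpha_2}(\overline{S_h})$, both $T_\nu\mathbf u$ and $v$ extend continuously to $\mathbf 0$, so passing to the limit along $\Gamma_h^+$ gives $T_{\nu_M}\mathbf u(\mathbf 0)+v(\mathbf 0)\nu_M=\mathbf 0$. Substituting (a) into $T_\nu\mathbf u=\lambda(\nabla\cdot\mathbf u)\nu+2\mu(\nabla^s\mathbf u)\nu$ and using $\nabla\cdot\mathbf u(\mathbf 0)=\operatorname{tr}\nabla^s\mathbf u(\mathbf 0)=2\partial_1 u_1(\mathbf 0)$ together with $(\nabla^s\mathbf u(\mathbf 0))\nu_M=\partial_1 u_1(\mathbf 0)\nu_M$, I obtain $T_{\nu_M}\mathbf u(\mathbf 0)=2(\lambda+\mu)\partial_1 u_1(\mathbf 0)\nu_M$. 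Hence $\big(2(\lambda+\mu)\partial_1 u_1(\mathbf 0)+v(\mathbf 0)\big)\nu_M=\mathbf 0$, and as $|\nu_M|=1$ this is \eqref{eq:thm21 v0}; the side $\Gamma_h^-$ gives the same identity, consistently.

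Parts (c) and (d) are then immediate. If $\partial_1 u_1(\mathbf 0)=0$, then (b) gives $v(\mathbf 0)=0$, which is \eqref{eq:v0=0}; if moreover $\partial_2 u_1(\mathbf 0)=0$, then (a) forces $\partial_2 u_2(\mathbf 0)=\partial_1 u_1(\mathbf 0)=0$ and $\partial_1 u_2(\mathbf 0)=-\partial_2 u_1(\mathbf 0)=0$, so $\nabla\mathbf u(\mathbf 0)=\mathbf 0$, which is \eqref{eq:gra u0=0}. The one delicate point is the order-counting in (a): I must confirm that the first bracket and the $I_5$-term are both $\Oh(s^{-3})$ and, crucially, that the leading $\Oh(s^{-1})$ parts inside $2\mu(I_2^+-e^{\mathrm i(\theta_m-\theta_M)}I_2^-)$ cancel, so that the genuine $s^{-2}$ coefficient is exactly the quadratic form fed into Lemma \ref{lem:conclusion}. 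Everything after this cancellation is algebra.
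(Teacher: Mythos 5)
Your proposal is correct and follows essentially the same route as the paper's proof: substitute the asymptotics of Lemmas \ref{lem:alpha} and \ref{lem:estimate} into the identity \eqref{eq:int1 imp}, note the exponential decay of $I_{\Lambda_h,1}, I_{\Lambda_h,2}$, multiply by $s^2$ and let $s\to+\infty$ to isolate the quadratic form $\mathbf p^\top\nabla\mathbf u(\mathbf 0)\bigl(e^{2\mathrm i(\phi-\theta_M)}\boldsymbol\tau_M-e^{\mathrm i(2\phi-\theta_m-\theta_M)}\boldsymbol\tau_m\bigr)=0$, then invoke Lemma \ref{lem:conclusion} for \eqref{eq:thm21}, and finally evaluate the traction condition at the vertex for \eqref{eq:thm21 v0}, with (c)--(d) following algebraically. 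Your part (b) is a slightly cleaner, coordinate-free version of the paper's computation (which works in coordinates with $\theta_m=0$ in \eqref{eq:Tum1}), and your order-counting, including the cancellation of the $\Oh(s^{-1})$ terms in $I_2^+-e^{\mathrm i(\theta_m-\theta_M)}I_2^-$, matches the paper's \eqref{eq:275 int}--\eqref{eq:277 int} exactly.
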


\begin{proof}[Proof of Theorem~\ref{thm:thm21}] 
Given that the corner is strictly convex, there exists a unit vector $\mathbf d$ satisfying the condition in \eqref{eq:d cond}. Consequently, the complex geometric optics (CGO) solutions $v_0$ and $\mathbf u_0$ can be constructed in the forms specified by \eqref{eq:cgo} and \eqref{eq:u0 cgo}, respectively.

Let $I_1^\pm$, $I_2^\pm$, $I_4$, $I_5$ and $I_{\Lambda_h,\ell }$ ($\ell=1,2$) be defined in \eqref{eq:I lambdah} and \eqref{eq:int def} respectively. According to Lemma \ref{lem:28 int}, we have the integral identity \eqref{eq:int1 imp}.  By \eqref{eq:I1+ est} and \eqref{eq:I1- est}, we know that
\begin{align}
	-\frac{e^{\mathrm i (\phi-\theta_M ) } \omega^2 \rho_b }{s^2 } \left(I_1^++I_1^-\right) &= \omega^2 \rho_b\left( (\mathbf u(\mathbf 0) \cdot \nu_M)\frac{e^{2\mathrm i (\phi-\theta_M )}}{s^3} +	(\mathbf u(\mathbf 0) \cdot \nu_m)\frac{e^{\mathrm i (2\phi-\theta_m- \theta_M )}}{s^3}\right) \notag
	\\
	&\quad +\Oh\left(\frac{1}{s^{3+\alpha_1}}\right), \label{eq:275 int}
	\end{align}
as $s\rightarrow +\infty$.  According to \eqref{eq:I2+ est} and \eqref{eq:I2- est}, it is readily to know that
\begin{align}\label{eq:276 int}
	&I_2^+ - e^{\mathrm i (\theta_m-\theta_M )} I_2^-=\frac{ \mathbf p ^\top \nabla \mathbf u(\mathbf 0)}{s^2} \left( e^{2\mathrm i (\phi-\theta_M )} \boldsymbol{\tau}_M - e^{\mathrm i (2\phi-\theta_m - \theta_M )}\boldsymbol{\tau}_m  \right)  +	\Oh\left(\frac{1}{s^{2 +\alpha _1}}\right) ,
\end{align}
as $s\rightarrow +\infty$.  Using \eqref{eq:I4 est} and \eqref{eq:I5 est}, we can derive that
\begin{align}\label{eq:277 int}
	&-\frac{e^{\mathrm i (\phi-\theta_M ) } \omega^2 \rho_b }{s^2 } I_4 -\frac{\omega^2 }{\bsi s e^{\bsi (\theta_M-\phi )}}	I_5\\ \notag 
	=& -\frac{\bsi\omega^2  e^{\bsi (3\phi-\theta_M)  }}{ 2 s^{3}}\left(e^{-2\bsi \theta_M} - e^{-2\bsi \theta_m}\right)\left[\frac{\rho_b \kappa^{-1} v(\mathbf 0)}{s} - \mathrm i \rho_e (\mathbf u(\mathbf 0 ) \cdot \mathbf p)\right] 
	 +	\Oh\left(\frac{1}{s^{3+\alpha_1}}\right),
\end{align}
as $s\rightarrow +\infty$.

Substituting \eqref{eq:275 int}-\eqref{eq:277 int} into \eqref{eq:int1 imp}, then multiplying $s^2$ on both sides, after rearranging terms,  we have
\begin{align}
	&2 \mu \mathbf p ^\top \nabla \mathbf u(\mathbf 0) \left( e^{2\mathrm i (\phi-\theta_M )} \boldsymbol{\tau}_M - e^{\mathrm i (2\phi-\theta_m - \theta_M )}\boldsymbol{\tau}_m  \right)
 =\frac{\bsi\omega^2  e^{\bsi (3\phi-\theta_M)  }}{ 2 s}\left(e^{-2\bsi \theta_M} - e^{-2\bsi \theta_m}\right)\notag \\
	&\quad \times \left[\frac{\rho_b \kappa^{-1} v(\mathbf 0)}{s} -\mathrm i \rho_e (\mathbf u(\mathbf 0 ) \cdot \mathbf p)\right]-\omega^2 \rho_b\left[  (\mathbf u(\mathbf 0) \cdot \nu_m) e^{\mathrm i (\theta_m- \theta_M )} +(\mathbf u(\mathbf 0) \cdot \nu_M) \right] \frac{e^{\mathrm i (2\phi-\theta_m- \theta_M )}}{s}\notag \\
	&\quad +e^{\mathrm i (\phi- \theta_M )} (\mathrm i s I_{\Lambda_h,2}-I_{\Lambda_h,1}) +\Oh\left(\frac{1}{s^{\alpha_2}}\right) , \label{eq:int eq final}
\end{align}
as $s \rightarrow + \infty$. Since $\mu>0$, using \eqref{eq:I arc1} and \eqref{eq:I arc2}, letting $s\rightarrow +\infty$ in \eqref{eq:int eq final}, it arrives that
\begin{align}\label{eq:gradient u=0}
	\mathbf p ^\top \nabla \mathbf u(\mathbf 0) \left( e^{2\mathrm i (\phi-\theta_M )} \boldsymbol{\tau}_M - e^{\mathrm i (2\phi-\theta_m - \theta_M )}\boldsymbol{\tau}_m  \right)=0,
\end{align}
where $\mathbf p$, $\boldsymbol{\tau}_m$, and $\boldsymbol{\tau}_M$ are defined in \eqref{eq:u0 cgo} and \eqref{eq:taumM}, respectively.  Given that $\mathbf u$ is real-valued and the corner is strictly convex, we combine \eqref{eq:gradient u=0} with Lemma \ref{lem:conclusion} to prove \eqref{eq:thm21}.

Recall that $\nu_m$ is the exterior unit normal vector to $\Gamma_h^-$, as defined in the \eqref{eq:nu pm}. 
Since $\mathbf u \in C^{1,\alpha_1 }({\overline{S_h} })^2 $, substituting \eqref{eq:nu pm} into \eqref{eq:traction1} yields
\begin{align}
T_\nu\mathbf{u} & =\lambda(\nabla\cdot\mathbf{u})\nu_m+2\mu(\nabla^s\mathbf{u})\nu_m \notag\\
& = \lambda ( \partial_1 u_1 + \partial_2 u_2) \left(
  \begin{array}{c}
    \sin\theta_m \\ -\cos\theta_m
  \end{array}
  \right)
  + \mu \left(
  \begin{array}{c}
  2 \partial_1 u_1 \cdot  \sin \theta_m - (\partial_1 u_2 + \partial_2 u_1)\cdot \cos\theta_m\\
  -2 \partial_2 u_2 \cdot\cos \theta_m + (\partial_1 u_2 + \partial_2 u_1) \cdot \sin\theta_m
  \end{array}
  \right)\notag\\
 & = \left(
 \begin{array}{c}
   \lambda ( \partial_1 u_1 + \partial_2 u_2) \cdot \sin \theta_m + 2 \mu \partial_1 u_1 \cdot \sin\theta_m - \mu (\partial_1 u_2 + \partial_2 u_1)\cdot \cos\theta_m\\
   -\lambda ( \partial_1 u_1 + \partial_2 u_2) \cdot \cos \theta_m - 2 \mu \partial_2 u_2 \cdot \cos\theta_m + \mu (\partial_1 u_2 + \partial_2 u_1)\cdot \sin\theta_m
 \end{array}
 \right) \notag \\
   & = \left(
  \begin{array}{c}
  - \mu (\partial_1 u_2 + \partial_2 u_1)\\
  	-\lambda ( \partial_1 u_1 + \partial_2 u_2)  - 2 \mu \partial_2 u_2 
  \end{array}
  \right).\label{eq:Tum1}
\end{align}

Next, we take $\mathbf{x}=\mathbf{0}$ and substitute \eqref{eq:thm21} into the last boundary condition in \eqref{eq:trans3},  and using \eqref{eq:Tum1}, we can readily derive \eqref{eq:thm21 v0}. Combining \eqref{eq:thm21} with \eqref{eq:thm21 v0} allows us to directly obtain \eqref{eq:v0=0} and \eqref{eq:gra u0=0}.
\end{proof}

\section{Local geometrical properties near corners of acoustic-elastic transmission eigenfunctions: three dimensional case}\label{sec:3}


In this section, we investigate the geometrical property of the transmission eigenfunction pair $(v,\mathbf{u}) $ to \eqref{eq:trans2} close to the corner in $\mathbb R^3$.  While it is possible to generalize to a corner similar to those in the 2D case, we focus specifically on a 3D corner defined by $S_h \times (-M,M)$, where $S_h$ is defined in \eqref{eq:Sh} and $M \in \mathbb{R}_+$. $S_h \times (-M,M)$. This defines an edge singularity. Assume that $\Omega$ is a Lipschitz domain in $\mathbb{R}^3$ containing $\mathbf{0}$ on its boundary and has a 3D edge corner. Consider $\mathbf{0} \in \mathbb{R}^2$ as the vertex of $S_h$ and let $x_3 \in (-M,M)$. The point $(\mathbf{0}, x_3)$ then represents an edge point of $S_h \times (-M,M)$. 

\par \qquad \\[-6.0em]
\begin{figure}[h]
\centering
\subfigure{\includegraphics[width=0.38\textwidth]{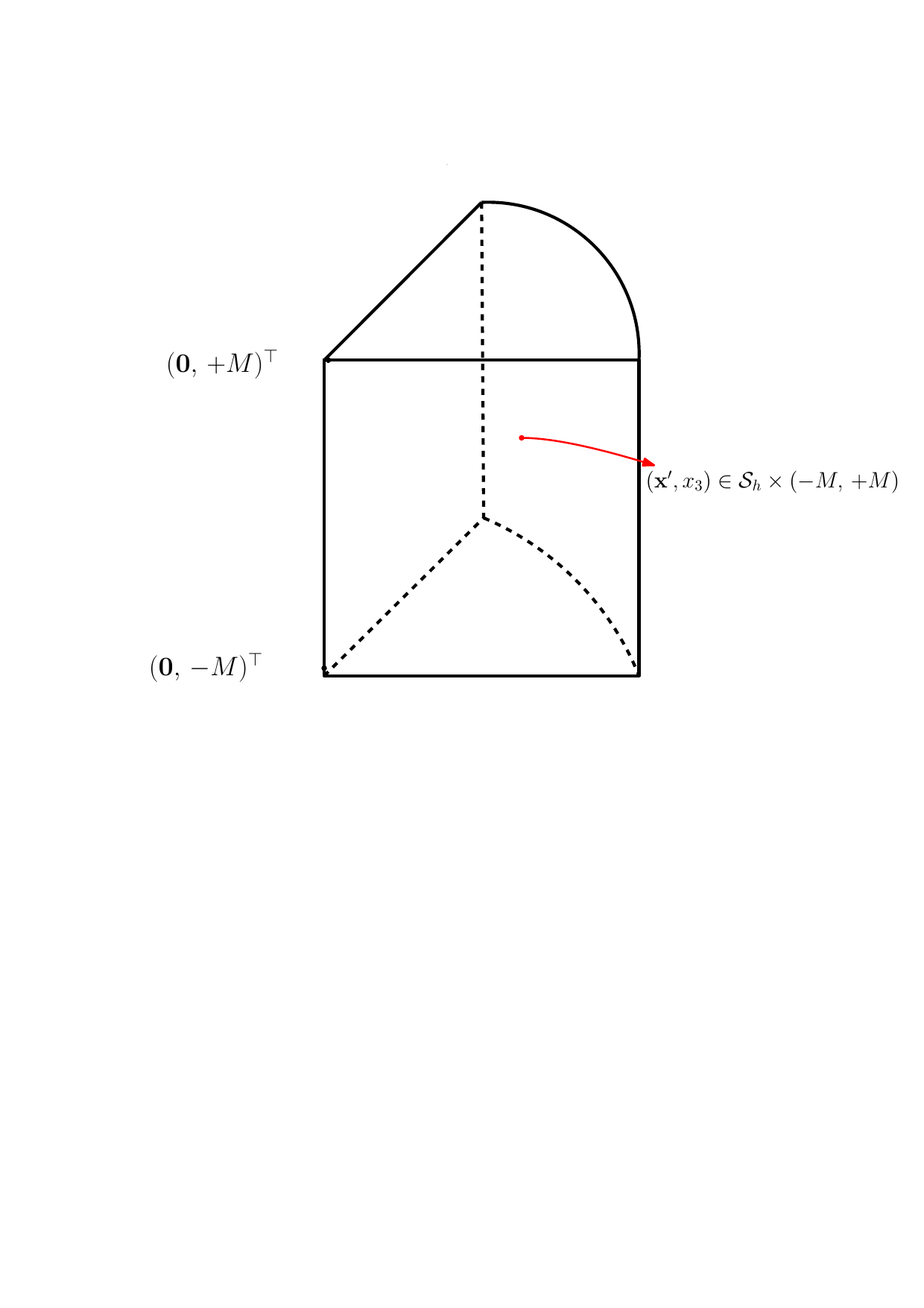}}\\
\caption{Schematic illustration of a 3D edge corner. \label{fig:3}}
\end{figure}

To investigate the vanishing property of the transmission eigenfunction pair $(v,\mathbf{u})$ for \eqref{eq:trans2} at a 3D edge corner, we utilize the CGO solution $\mathbf{u}(\mathbf{x})$ introduced in Lemma \ref{lem:u0}. To facilitate this analysis, we define a dimension reduction operator as follows:

\begin{defn}\label{def:RO}
	Let $S_h \Subset \mathbb{R}^2$ be defined in \eqref{eq:Sh}, $M>0$. For any point $x_3 \in (-M,M)$, suppose that $\psi \in C_0^{\infty}((x_3-L,x_3+L))$ is a non-negative function and $\psi \not \equiv 0 $, where $L$ is sufficiently small such that $(x_3-L,x_3+L) \Subset (-M,M)$, and write $\mathbf{x}=(\mathbf{x}^{\prime},x_3) \in \mathbb{R}^3$, $\mathbf{x}^{\prime} \Subset \mathbb{R}^2$. The dimension reduction operator $\mathcal{R}$ is defined by
	\begin{equation}\label{eq:RO}
		\mathcal{R}(\mathbf{g})(\mathbf{x^{\prime}}) = \int_{x_3-L}^{x_3+L} \psi(x_3) \mathbf{g}(\mathbf{x^{\prime}}, x_3) \rmd x_3,
		\end{equation}
	where  $\mathbf{g}$ is a given function defined in the domain $S_h \times (-M,M)$ and $\mathbf{x^{\prime}} \in S_h$.
\end{defn}

Before presenting the main results of this section, we first analyze the regularity of the function after applying the dimension reduction operator $\mathcal{R}$. Using an argument analogous to \cite[Lemma 3.4]{Bsource}, we can establish the following result. The detailed proof is omitted.

\begin{lem} \label{lem:RO}
	Let $\mathbf{g} \in H^m(S_h \times (-M,M))^3, m=1,2$. Then
	\begin{equation}\notag
			\mathcal{R}(\mathbf{g})(\mathbf{x^{\prime}}) \in H^m(S_h)^3.
	\end{equation}
Similarly, if $\mathbf{g} \in C^{1,\alpha}( \overline{S_h} \times (-M,M))^3, 0< \alpha <1 $, then
\begin{equation}\notag
	\mathcal{R}(\mathbf{g})(\mathbf{x^{\prime}}) \in C^{1,\alpha}( \overline{S_h} )^3.
\end{equation}
\end{lem}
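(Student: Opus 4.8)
The plan is to reduce everything to a scalar statement and then to exploit the fact that the reduction operator $\mathcal{R}$ differentiates trivially in the surviving variables $\mathbf{x}^\prime$: since $\mathcal{R}$ only integrates out the $x_3$-variable against the fixed weight $\psi$, every $\mathbf{x}^\prime$-derivative should pass through the integral, whereas $x_3$-derivatives never enter the conclusion. Because $\mathcal{R}$ acts componentwise, I may assume $\mathbf{g}=g$ is scalar throughout.

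For the Sobolev claim, fix a multi-index $\beta=(\beta_1,\beta_2)$ in the $\mathbf{x}^\prime$ variables with $|\beta|\le m$ and write $D^\beta=\partial_{\mathbf{x}^\prime}^\beta$. First I would establish the commutation identity $D^\beta\mathcal{R}(g)=\mathcal{R}(D^\beta g)$ in the weak sense. To do this I test against $\varphi\in C_0^\infty(S_h)$ and use $\Phi(\mathbf{x}^\prime,x_3):=\varphi(\mathbf{x}^\prime)\psi(x_3)\in C_0^\infty(S_h\times(-M,M))$ as a test function for the weak derivative $D^\beta g$, which lies in $L^2$ because $g\in H^m$ and $|\beta|\le m$. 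Applying the definition of the weak derivative together with Fubini's theorem moves the derivative off $g$ and onto $\varphi$, yielding $\int_{S_h}\mathcal{R}(g)\,D^\beta\varphi=(-1)^{|\beta|}\int_{S_h}\mathcal{R}(D^\beta g)\,\varphi$, which is exactly the claimed identity. Once this is available, a single application of the Cauchy--Schwarz inequality in the $x_3$-integral gives the pointwise bound $|\mathcal{R}(D^\beta g)(\mathbf{x}^\prime)|^2\le\|\psi\|_{L^2}^2\int |D^\beta g(\mathbf{x}^\prime,x_3)|^2\,\rmd x_3$; integrating in $\mathbf{x}^\prime$ and using Fubini bounds $\|D^\beta\mathcal{R}(g)\|_{L^2(S_h)}$ by $\|\psi\|_{L^2}\,\|D^\beta g\|_{L^2(S_h\times(-M,M))}\le\|\psi\|_{L^2}\,\|g\|_{H^m(S_h\times(-M,M))}$. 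Summing over all $|\beta|\le m$ then gives $\mathcal{R}(g)\in H^m(S_h)$.

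For the Hölder claim, continuity of $g$ and of its first derivatives on the compact set $\overline{S_h}\times(-M,M)$ lets me invoke the classical Leibniz rule for differentiation under the integral sign, so that $\mathcal{R}(g)$ is $C^1$ with $\partial_{x_i^\prime}\mathcal{R}(g)(\mathbf{x}^\prime)=\int\psi(x_3)\,\partial_{x_i^\prime}g(\mathbf{x}^\prime,x_3)\,\rmd x_3$ for $i=1,2$. Uniform bounds for $\mathcal{R}(g)$ and its first derivatives follow at once by pulling the sup-norm of $g$ outside the integral and using $\|\psi\|_{L^1}<\infty$. For the seminorm I subtract the integrands at two points $\mathbf{x}^\prime,\mathbf{y}^\prime$, pull $\psi$ out in absolute value, and use the $\alpha$-Hölder continuity of $\partial_{x_i^\prime}g$ uniformly in $x_3$ to obtain $|\partial_{x_i^\prime}\mathcal{R}(g)(\mathbf{x}^\prime)-\partial_{x_i^\prime}\mathcal{R}(g)(\mathbf{y}^\prime)|\le\|\psi\|_{L^1}\,\|g\|_{C^{1,\alpha}}\,|\mathbf{x}^\prime-\mathbf{y}^\prime|^\alpha$, which is precisely the $C^{1,\alpha}(\overline{S_h})$ estimate.

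The routine parts (Cauchy--Schwarz, the Leibniz rule, the Hölder estimate) are standard; the only point that requires genuine care is the weak commutation identity $D^\beta\mathcal{R}(g)=\mathcal{R}(D^\beta g)$ in the $H^m$ case. The delicacy is that $g(\cdot,x_3)$ need not lie in $H^m(S_h)$ for a.e.\ $x_3$ in any pointwise sense, so one cannot differentiate slicewise; the identity must instead be obtained entirely at the level of distributions through the product test function $\Phi$ and Fubini, while keeping track that only the tangential derivatives $D^\beta$ (never $\partial_{x_3}$) appear, so that the whole argument stays within the order-$m$ derivatives controlled by $\|g\|_{H^m(S_h\times(-M,M))}$.
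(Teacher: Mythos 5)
Your proof is correct: reducing to the scalar case, establishing the weak commutation identity $D^\beta\mathcal{R}(g)=\mathcal{R}(D^\beta g)$ via the product test function $\varphi(\mathbf{x}')\psi(x_3)$ and Fubini, bounding with Cauchy--Schwarz, and then using differentiation under the integral sign together with the uniform-in-$x_3$ H\"older estimate for the $C^{1,\alpha}$ part is exactly the argument needed. The paper itself omits the proof, referring to an analogous argument in \cite[Lemma 3.4]{Bsource}, and your write-up is precisely that standard argument (your distributional treatment even sidesteps the slicewise-regularity issue cleanly), so it matches the intended approach.
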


In the following, we aim to demonstrate the vanishing property of transmission eigenfunctions at an edge corner in 3D. To begin, we introduce the mathematical setup. Let $S_h \Subset \mathbb{R}^2$ be defined by \eqref{eq:Sh}, and let $M>0$. Assume that  $x_3 \in (-M,M)$ and $L>0$, as defined in Definition \eqref{def:RO}. we assert that $L$ is sufficiently small for $(x_3-L, x_3+L) \Subset (-M,M)$. Consider $\mathbf{x}=(\mathbf{x^{\prime}}, x_3) \in \mathbb{R}^3$, where $\mathbf{x^{\prime}}\in \mathbb{R}^2$. Let $\mathbf{u}\in H^1(S_h \times (-M,M))^3$, $\mathbf{v} \in H^1(S_h \times (-M,M))$ and satisfy the following equation:

\begin{equation}\label{eq:transR3}
	\begin{cases}
		\mathcal{L}_{\lambda, \mu} \mathbf{u}+\omega^2\rho_e\mathbf{u}=\mathbf{0} \quad & \mathbf{x^{\prime}} \in S_h,  -M<x_3<M,\medskip\\
		\Delta v+\omega^2 \rho_b\kappa^{-1} v=0 \quad & \mathbf{x^{\prime}} \in S_h,  -M<x_3<M,\medskip\\
		\mathbf{u}\cdot\nu-\frac{1}{\rho_b\omega^2} \nabla v\cdot\nu=0 \quad &\mathbf{x^{\prime}} \in \Gamma_h^{\pm},  -M<x_3<M,\medskip\\
		T_{\nu}\mathbf{u}+v\nu=\mathbf{0} \quad & \mathbf{x^{\prime}} \in \Gamma_h^{\pm},  -M<x_3<M,
	\end{cases}
\end{equation}
where $\Gamma_h^{\pm}$ are defined in \eqref{eq:gammaEX}, $\nu$ is the outward normal vector to $\Gamma_h^{\pm} \times (-M,M)$, and $T_\nu$ is the boundary traction operator on $\Gamma_h^{\pm} \times (-M,M)$. For simplicity and clarity, we assume that $\rho_b$ and $\kappa$ are constants. Additionally, we assume that $\lambda, \mu$, and $\rho_e$ are constants, representing the properties of a homogeneous elastic background medium.

By applying the dimension reduction operator, as defined in Definition \ref{def:RO}, we can rewrite \eqref{eq:transR3} in an equivalent form, as shown in the following lemma.

\begin{lem}\label{lem:3uv}
	Suppose that  $\mathbf{u} \in H^1(S_h \times (-M,M))^3, v \in H^1(S_h \times (-M,M))$ fulfill \eqref{eq:transR3}. Denote 
	\begin{equation}\label{eq:3nu}
		\mathbf{u}= \begin{pmatrix}
			u_1\\ u_2\\ u_3
		\end{pmatrix}= \begin{pmatrix}
		\mathbf{u}^{(1,2)}\\ u_3
	\end{pmatrix} \in \mathbb{R}^3, \quad \nu = \begin{pmatrix}
	\nu_1\\ \nu_2\\ \nu_3
\end{pmatrix}= \begin{pmatrix}
\nu^{(1,2)}\\ \nu^{(3)}
\end{pmatrix} \in \mathbb{R}^3,
	\end{equation}
where $\nu$ is the outward normal vector to $\Gamma_h^{\pm} \times (-M,M)$. To ensure clarity and simplicity, we assume that $\nu^{(3)}=0$. Noting that $\nu^{(1,2)} \in \mathbb{R}^2$ signifies the exterior unit normal vector to $\Gamma_h^{\pm}$ and $\mathbf{u}^{(1,2)} \in \mathbb{R}^2$.
	 Denote
	\begin{align}
	\mathbf{G}_1(\mathbf{x^{\prime}})= & - \omega^2 \rho_e \mathcal{R}(\mathbf{u})(\mathbf{x^{\prime}}) - \int_{-L} ^L \psi^{\prime \prime} (x_3) \begin{pmatrix}
		\mu u_1\\ \mu u_2\\ (\lambda + 2 \mu) u_3
	\end{pmatrix}(\mathbf{x^{\prime}}, x_3) \rmd x_3 \notag\\
& + (\lambda + \mu) \int_{-L} ^L \psi^{\prime} (x_3) \begin{pmatrix}
	\partial_1 u_3\\ \partial_2 u_3\\ \partial_1 u_1 + \partial_2 u_2
\end{pmatrix}(\mathbf{x^{\prime}}, x_3) \rmd x_3, \notag\\
	\mathbf{G}_2(\mathbf{x^{\prime}})= & -\int_{-L}^L \psi^{\prime \prime}(x_3) v(\mathbf{x^{\prime}}, x_3) \rmd x_3- \omega^2 \rho_b \kappa^{-1} \mathcal{R}(v) (\mathbf{x^{\prime}}). \notag
	\end{align}
Then it holds that
\begin{equation}\label{eq:3UV}
\begin{cases}
	\tilde{\mathcal{L}} \mathcal{R}(\mathbf{u})(\mathbf{x^{\prime}}) = \mathbf{G}_1 (\mathbf{x^{\prime}})  & \mbox{in} \mbox{ } S_h, \medskip \\
	\Delta^{\prime}\mathcal{R}(v)(\mathbf{x^{\prime}})= \mathbf{G}_2 (\mathbf{x^{\prime}})  & \mbox{in} \mbox{ } S_h, \medskip \\
	\mathcal{R}(\mathbf{u})(\mathbf{x^{\prime}}) \cdot \nu- \frac{1}{\rho_b \omega^2} \partial_{\nu}\mathcal{R}(v)(\mathbf{x^{\prime}}) =0 & \mbox{on} \mbox{ } \Gamma_h^{\pm}, \medskip \\
	\begin{bmatrix}
		T_{\nu^{(1,2)}}\mathcal{R}(\mathbf{u}^{(1,2)}) + \lambda \mathcal{R}(\partial_3 u_3) \nu^{(1,2)}\\ 
		\mu \partial_{\nu^{(1,2)}} \mathcal{R}(u_3) + \mu \begin{bmatrix}
			\mathcal{R}(\partial_3 u_1)\\ \mathcal{R}(\partial_3 u_2) 
		\end{bmatrix}^\top \nu^{(1,2)}
	\end{bmatrix}
+\mathcal{R}(v) \nu ={\bf 0} & \mbox{on} \mbox{ } \Gamma_h^{\pm}, 
\end{cases}
\end{equation}
in the distribution sense, where $\Delta^{\prime} = \partial_1^2 + \partial_2^2 $ being Laplace operator with respect to $\mathbf{x^{\prime}}$-variables, $T_\nu$ is the two dimensional boundary traction operator defined in \eqref{eq:traction1} and 
\begin{equation}\label{eq:definedL}
 	\tilde{\mathcal{L}}= \begin{pmatrix}
 		\mu \Delta^{\prime} + (\lambda + \mu) \partial_1^2 & (\lambda + \mu) \partial_1 \partial_2 & 0\\ (\lambda + \mu) \partial_1 \partial_2 & 	\mu \Delta^{\prime} + (\lambda + \mu) \partial_2^2 & 0\\ 0 & 0 & \mu \Delta^{\prime}
 	\end{pmatrix} := \begin{pmatrix}
 	\mathcal{L} & 0\\ {\bf 0} & \mu \Delta^{\prime}
 \end{pmatrix},
\end{equation}
where $\mathcal{L}$ is the two dimensional Lam\'{e} operator with respect to $\mathbf{x^{\prime}}$-variables.
\end{lem}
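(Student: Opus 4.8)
The plan is to derive the reduced two-dimensional system \eqref{eq:3UV} from the three-dimensional system \eqref{eq:transR3} by applying the dimension reduction operator $\mathcal{R}$ of \eqref{eq:RO} to each of the four relations, exploiting two elementary facts. First, $\mathcal{R}$ commutes with the in-plane derivatives $\partial_1,\partial_2$ (hence with $\Delta'$, with the two-dimensional Lamé operator $\mathcal{L}$, with $T_{\nu^{(1,2)}}$, and with $\partial_{\nu^{(1,2)}}$), since these act only on the $\mathbf{x}'$-variables. Second, because $\psi\in C_0^\infty((x_3-L,x_3+L))$ with $(x_3-L,x_3+L)\Subset(-M,M)$, integration by parts in $x_3$ produces no boundary contributions, so that $\mathcal{R}(\partial_3 g)=-\int\psi'(x_3)\,g\,\rmd x_3$ and $\mathcal{R}(\partial_3^2 g)=\int\psi''(x_3)\,g\,\rmd x_3$. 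Since $\mathbf{u},v$ are only $H^1$, these identities are read in the distributional sense, and their validity in $H^1(S_h)$ is guaranteed by Lemma \ref{lem:RO}.

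First I would treat the interior equations. Writing $\Delta=\Delta'+\partial_3^2$ and $\nabla\cdot\mathbf{u}=\partial_1u_1+\partial_2u_2+\partial_3u_3$, I expand $\mathcal{L}_{\lambda,\mu}\mathbf{u}$ from \eqref{eq:lame} componentwise and split off the part involving only $\partial_1,\partial_2$: this part is exactly the block operator $\tilde{\mathcal{L}}$ of \eqref{eq:definedL} applied to $\mathbf{u}$, while the remainder gathers the terms carrying a $\partial_3$, namely $\mu\partial_3^2u_j$ and $(\lambda+\mu)\partial_j\partial_3u_3$ for $j=1,2$, and $(\lambda+2\mu)\partial_3^2u_3+(\lambda+\mu)\partial_3(\partial_1u_1+\partial_2u_2)$ in the third slot. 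Applying $\mathcal{R}$, the in-plane part yields $\tilde{\mathcal{L}}\mathcal{R}(\mathbf{u})$, and each $\partial_3$-term is converted by the two rules above into precisely one of the $\int\psi'(\cdots)$ or $\int\psi''(\cdots)$ integrals making up $\mathbf{G}_1$; carrying $-\omega^2\rho_e\mathcal{R}(\mathbf{u})$ across gives the first line of \eqref{eq:3UV}. The Helmholtz equation is identical: $\mathcal{R}(\Delta v)=\Delta'\mathcal{R}(v)+\int\psi''v\,\rmd x_3$, so $\Delta'\mathcal{R}(v)=-\int\psi''v\,\rmd x_3-\omega^2\rho_b\kappa^{-1}\mathcal{R}(v)=\mathbf{G}_2$.

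For the boundary relations I would use the standing assumption $\nu^{(3)}=0$ from \eqref{eq:3nu}, so $\nu=(\nu^{(1,2)},0)$ with $\nu^{(1,2)}$ independent of $x_3$. The first transmission condition involves only $\mathbf{u}\cdot\nu=\mathbf{u}^{(1,2)}\cdot\nu^{(1,2)}$ and $\nabla v\cdot\nu=\nabla' v\cdot\nu^{(1,2)}$ with $\nabla'=(\partial_1,\partial_2)^\top$, both of which pass through $\mathcal{R}$ unchanged, giving the third line. The traction condition is the main obstacle and needs the most care: expanding $T_\nu\mathbf{u}=\lambda(\nabla\cdot\mathbf{u})\nu+2\mu(\nabla^s\mathbf{u})\nu$ from \eqref{eq:traction1} in three dimensions and setting $\nu_3=0$, the first two components decompose as $T_{\nu^{(1,2)}}\mathbf{u}^{(1,2)}+\lambda(\partial_3u_3)\nu^{(1,2)}$ (the genuinely two-dimensional traction of $\mathbf{u}^{(1,2)}$ plus a $\partial_3u_3$ correction from the out-of-plane divergence), while the third component becomes $\mu\partial_{\nu^{(1,2)}}u_3+\mu(\partial_3u_1,\partial_3u_2)^\top\cdot\nu^{(1,2)}$, the $\partial_3u_1,\partial_3u_2$ pieces coming from the symmetric gradient. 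Applying $\mathcal{R}$, using its commutation with $T_{\nu^{(1,2)}}$ and $\partial_{\nu^{(1,2)}}$, the $x_3$-independence of $\nu^{(1,2)}$, and $\mathcal{R}(v\nu)=\mathcal{R}(v)\nu$, reproduces exactly the fourth line of \eqref{eq:3UV}.

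I expect the genuine difficulty, beyond the bookkeeping in the traction term, to be the limited $H^1$ regularity: the individual second derivatives above do not lie in $L^2$ a priori, so the identities cannot be obtained by naive pointwise integration by parts. This is resolved by working throughout in the distributional sense, testing the $L^2$ identities $\mathcal{L}_{\lambda,\mu}\mathbf{u}=-\omega^2\rho_e\mathbf{u}$ and $\Delta v=-\omega^2\rho_b\kappa^{-1}v$ against separated test functions $\psi(x_3)\varphi(\mathbf{x}')$ and transferring the $x_3$-derivatives onto $\psi$, whose compact support in $(-M,M)$ guarantees that no $x_3$-boundary terms appear; the output regularity $\mathcal{R}(\mathbf{u})\in H^1(S_h)^3$ and $\mathcal{R}(v)\in H^1(S_h)$ that renders the reduced system meaningful is furnished by Lemma \ref{lem:RO}.
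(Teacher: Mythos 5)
Your proof is correct. The paper states Lemma \ref{lem:3uv} without any proof — it is presented as a direct consequence of applying the dimension reduction operator $\mathcal{R}$ to \eqref{eq:transR3} — and your computation (splitting the 3D Lam\'e and Laplace operators into in-plane and $x_3$ parts, integrating by parts in $x_3$ with no boundary terms thanks to the compact support of $\psi$, and using $\nu^{(3)}=0$ together with the $x_3$-independence of $\nu^{(1,2)}$ to decompose the traction condition) is exactly the verification the authors leave implicit, including the correct distributional interpretation required at the stated $H^1$ regularity.
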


\begin{rem}\label{rem:31}
	Under the same setup in Lemma \ref{lem:3uv}, the PDE system \eqref{eq:3UV} is equivalent to
	\begin{equation}\label{eq:3Du12}
		\begin{cases}
			\mathcal{L} \mathcal{R}(\mathbf{u}^{(1,2)})(\mathbf{x^{\prime}})= \mathbf{G}_1^{(1,2)} (\mathbf{x^{\prime}}) & \mbox{in} \mbox{ } S_h, \medskip \\
			T_{\nu^{(1,2)}}\mathcal{R}(\mathbf{u}^{(1,2)}) (\mathbf{x^{\prime}}) + \lambda \mathcal{R}(\partial_3 u_3) (\mathbf{x^{\prime}}) \nu^{(1,2)} + \mathcal{R}(v) (\mathbf{x^{\prime}}) \nu^{(1,2)} =\mathbf{0} & \mbox{on} \mbox{ } \Gamma_h^{\pm},
			\end{cases}
	\end{equation}
and
\begin{equation}\label{eq:3Du3}
	\begin{cases}
		\mu \Delta^{\prime} \mathcal{R}(u_3) (\mathbf{x^{\prime}}) = \mathbf{G}_1^{(3)} (\mathbf{x^{\prime}}) & \mbox{in} \mbox{ } S_h, \medskip \\
		\mu \partial_{\nu^{(1,2)}} \mathcal{R}(u_3) (\mathbf{x^{\prime}}) + \mu \begin{bmatrix}
			\mathcal{R}(\partial_3 u_1)(\mathbf{x^{\prime}})\\ \mathcal{R}(\partial_3 u_2) (\mathbf{x^{\prime}})
		\end{bmatrix} \cdot \nu^{(1,2)} =0 & \mbox{on} \mbox{ } \Gamma_h^{\pm},
	\end{cases}
\end{equation}
and
\begin{equation}\label{eq:3Dv}
	\begin{cases}
		\Delta^{\prime}\mathcal{R}(v)(\mathbf{x^{\prime}})= \mathbf{G}_2 (\mathbf{x^{\prime}})  & \mbox{in} \mbox{ } S_h, \medskip \\
		\mathcal{R}(\mathbf{u}^{(1,2)})(\mathbf{x^{\prime}}) \cdot \nu^{(1,2)}- \frac{1}{\rho_b \omega^2} \partial_{\nu^{(1,2)}}\mathcal{R}(v)(\mathbf{x^{\prime}}) = 0 & \mbox{on} \mbox{ } \Gamma_h^{\pm}, \medskip \\
	\end{cases}
\end{equation}
where
	\begin{align}
	\mathbf{G}_1^{(1,2)}(\mathbf{x^{\prime}})= & - \omega^2 \rho_e \mathcal{R}(\mathbf{u}^{(1,2)})(\mathbf{x^{\prime}}) - \int_{-L} ^L \psi^{\prime \prime} (x_3) \begin{pmatrix}
		\mu u_1\\ \mu u_2
	\end{pmatrix}(\mathbf{x^{\prime}}, x_3) \rmd x_3 \notag\\
	& + (\lambda + \mu) \int_{-L} ^L \psi^{\prime} (x_3) \begin{pmatrix}
		\partial_1 u_3\\ \partial_2 u_3
	\end{pmatrix}(\mathbf{x^{\prime}}, x_3) \rmd x_3, \notag\\
	\mathbf{G}_1^{(3)}(\mathbf{x^{\prime}})= & - \omega^2 \rho_e \mathcal{R}(\mathbf{u}_3)(\mathbf{x^{\prime}}) - (\lambda + 2 \mu)\int_{-L} ^L \psi^{\prime \prime} (x_3) u_3 (\mathbf{x^{\prime}}, x_3) \rmd x_3 \notag\\ & + (\lambda +  \mu)\int_{-L} ^L \psi^{\prime} (x_3) (\partial_1 u_1 + \partial_2 u_2 ) (\mathbf{x^{\prime}}, x_3)  \rmd x_3. \notag
\end{align}
	\end{rem}
	
We can present another main result of this paper on the geometrical structures  of transmission eigenfunctions at an edge corner in 3D.

\begin{thm}\label{thm:3Ru123}
	Let $\Omega \in \mathbb{R}^3$ be a bounded Lipschitz domain with $\mathbf{0} \in \partial \Omega$ and let $S_h \in \mathbb{R}^2$ be defined in \eqref{eq:Sh}. For any fixed $x_3 \in (-M,M), M>0$ and $L>0$ defined in Definition \ref{def:RO}, we suppose that $L$ is sufficiently small such that $(x_3-L,x_3+L) \Subset (-M,M)$ and $$(B_h \times (-M,M)) \cap \Omega=S_h \times(-M,M),$$ where $B_h \Subset \mathbb{R}^2$ is the central ball of radius $h \in \mathbb{R}_+$. Assume that  $(v,\mathbf{u})\in   H^1(\Omega) \times H^1(\Omega )^3$ be a pair of transmission eigenfunctions  \eqref{eq:trans2} satisfying \eqref{eq:transR3} and there exists a sufficiently small neighborhood $S_h \times (-M,M)$(i.e. $h>0$ is sufficiently small) of $(\mathbf{0},x_3)$ with $x_3 \in (-M,M)$ such that
	  $ \mathbf{u} \in C^{1,\alpha_1}(\overline S_h \times [-M,M])^3$ and $v \in C^{\alpha_2}(\overline S_h \times [-M,M])$  for $\alpha_1,\,\alpha_2 \in (0,1)$.  
Then one has
\begin{enumerate}
	\item[(a)] 
\begin{equation}\label{eq:thm31}
	\nabla^s \mathbf u (\mathbf 0)= \begin{pmatrix}
		\partial_1 u_1 (\mathbf 0)& 0 &0\\
		0& \partial_2 u_2 (\mathbf 0)& 0\\
		0& 0& \partial_3 u_3 (\mathbf 0)
		\end{pmatrix},
\end{equation}
	where $\nabla^s \mathbf u$ is the strain tensor of $\mathbf u$ defined in 
\eqref{eq:traction1}.
\item [(b)] Assume that $\partial_3 u_3(\mathbf{0}) = 0$,  we obtain that
\begin{align}\label{eq:thm31 v0}
	v(\mathbf 0)=-2(\lambda+\mu ) \partial_1 u_1(\mathbf 0 );
\end{align}
\item[(c)] Moreover, under the assumption $(b)$ and assume that $\partial_1 u_1(\mathbf{0})=0$, we deduce that
\begin{equation}\label{eq:3v0=0}
	v(\mathbf{0})=0;
	\end{equation}
\item[(d)] Furthermore, under the assumption  $(b)$ and $(c)$,  suppose that \begin{equation}\label{eq:3R1condition}
	\partial_2 u_1(\mathbf 0)=0,  \quad and \quad	\partial_3 u_\ell(\mathbf 0)=0,\quad \ell=1,2.
\end{equation}
Then
\begin{equation*}
	\nabla \mathbf{u}(\mathbf{0})=\mathbf{0}.
\end{equation*}
\end{enumerate} 
\end{thm}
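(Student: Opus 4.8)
The plan is to reduce the three–dimensional edge problem to a planar one through the dimension reduction operator $\mathcal{R}$ and then transplant the CGO analysis of Section~\ref{sec:2}. First I would apply Lemma~\ref{lem:3uv} and Remark~\ref{rem:31} to replace \eqref{eq:transR3} by the coupled planar systems \eqref{eq:3Du12}, \eqref{eq:3Du3} and \eqref{eq:3Dv} for the averaged fields $\mathcal{R}(\mathbf{u}^{(1,2)})$, $\mathcal{R}(u_3)$ and $\mathcal{R}(v)$, and invoke Lemma~\ref{lem:RO} to carry over regularity, so that $\mathcal{R}(\mathbf{u}^{(1,2)}),\mathcal{R}(u_3)\in C^{1,\alpha_1}(\overline{S_h})$ while $\mathcal{R}(v)\in C^{\alpha_2}(\overline{S_h})$; the inhomogeneities $\mathbf{G}_1^{(1,2)},\mathbf{G}_1^{(3)},\mathbf{G}_2$ are then H\"older continuous, hence bounded near $\mathbf{0}$. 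I will use repeatedly that $\mathcal{R}$ commutes with $\partial_1,\partial_2$, and that an integration by parts in $x_3$ gives $\mathcal{R}(\partial_3 u_\ell)=-\int\psi'u_\ell\,\mathrm{d}x_3\in C^{1,\alpha_1}(\overline{S_h})$.

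For the in-plane block I would read \eqref{eq:3Du12}--\eqref{eq:3Dv} as a two-dimensional transmission problem whose effective acoustic field in the traction condition is $\tilde v:=\lambda\mathcal{R}(\partial_3 u_3)+\mathcal{R}(v)$, and reproduce Lemma~\ref{lem:28 int}: I pair $(\mathcal{R}(v),\mathcal{R}(\mathbf{u}^{(1,2)}))$ with the CGO solutions $(v_0,\mathbf{u}_0)$ through \eqref{eq:GIN1} and \eqref{eq:green1}, then combine the two identities using the ratio \eqref{eq:240 ratio} to annihilate the shared $\int_{\Gamma_h^\pm}\mathcal{R}(v)v_0$ boundary terms. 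The crucial point is that every genuinely three-dimensional term is subleading: by \eqref{eq:v0 alpha} and \eqref{eq:u0 alpha} the source integrals $\int_{S_h}\mathbf{u}_0\cdot\mathbf{G}_1^{(1,2)}$ and $\int_{S_h}v_0\,\mathbf{G}_2$ are $\mathcal{O}(s^{-2})$ and enter weighted by an extra $s^{-1}$, while the leftover boundary term $\lambda\int_{\Gamma_h^\pm}\mathcal{R}(\partial_3 u_3)(\mathbf{p}\cdot\nu)v_0$ produced by the mismatch between the traction and impedance conditions has its $\mathbf{x}$-independent part killed by the cancellation $\int_{\Gamma_h^+}v_0+\tfrac{\mathbf{p}\cdot\nu_m}{\mathbf{p}\cdot\nu_M}\int_{\Gamma_h^-}v_0=0$ coming from \eqref{eq:v0 int} and \eqref{eq:240 ratio}, its remainder being $o(s^{-2})$ after the $C^{1,\alpha_1}$ expansion \eqref{eq: u c1}. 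Multiplying by $s^2$ and letting $s\to+\infty$, exactly as in \eqref{eq:int eq final}--\eqref{eq:gradient u=0}, I obtain $\mathbf{p}^\top\nabla'\mathcal{R}(\mathbf{u}^{(1,2)})(\mathbf{0})\big(e^{2\mathrm{i}(\phi-\theta_M)}\boldsymbol{\tau}_M-e^{\mathrm{i}(2\phi-\theta_m-\theta_M)}\boldsymbol{\tau}_m\big)=0$. Lemma~\ref{lem:conclusion} then yields $\partial_1\mathcal{R}(u_1)(\mathbf{0})=\partial_2\mathcal{R}(u_2)(\mathbf{0})$ and $\partial_1\mathcal{R}(u_2)(\mathbf{0})+\partial_2\mathcal{R}(u_1)(\mathbf{0})=0$; since $\psi\ge0$ is arbitrary, these pass to $\mathbf{u}$ pointwise along the edge.

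The two remaining mixed entries $(\nabla^s\mathbf{u})_{13}(\mathbf{0})$ and $(\nabla^s\mathbf{u})_{23}(\mathbf{0})$ I would read off the last condition in \eqref{eq:transR3} directly. Because $\nu^{(3)}=0$, its third scalar component is $2\mu\big[(\nabla^s\mathbf{u})_{31}\nu_1+(\nabla^s\mathbf{u})_{32}\nu_2\big]=0$ on each $\Gamma_h^\pm$; by the $C^{1,\alpha_1}$-continuity of $\nabla\mathbf{u}$ this holds at the edge point with $\nu=\nu_M$ and $\nu=\nu_m$, and since the opening angle lies in $(0,\pi)$ the two normals are linearly independent, forcing $(\nabla^s\mathbf{u})_{31}(\mathbf{0})=(\nabla^s\mathbf{u})_{32}(\mathbf{0})=0$. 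Combined with the in-plane result this gives the diagonal tensor \eqref{eq:thm31}. The same entries can alternatively be extracted by running the scalar CGO identity on \eqref{eq:3Du3}, whose Neumann datum $\mu[\mathcal{R}(\partial_3 u_1),\mathcal{R}(\partial_3 u_2)]\cdot\nu^{(1,2)}$ carries precisely the off-diagonal strain.

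Parts (b)--(d) are then algebraic. Inserting \eqref{eq:thm31} and $\partial_1 u_1(\mathbf{0})=\partial_2 u_2(\mathbf{0})$ into the first two components of $T_\nu\mathbf{u}+v\nu=\mathbf{0}$ at $\mathbf{0}$, as in \eqref{eq:Tum1}, gives $v(\mathbf{0})=-2(\lambda+\mu)\partial_1 u_1(\mathbf{0})-\lambda\partial_3 u_3(\mathbf{0})$, which is \eqref{eq:thm31 v0} once $\partial_3 u_3(\mathbf{0})=0$; then \eqref{eq:3v0=0} is immediate, and under \eqref{eq:3R1condition} the vanishing of the three symmetric off-diagonal entries turns $\partial_2 u_1(\mathbf{0})=0$, $\partial_3 u_1(\mathbf{0})=0$, $\partial_3 u_2(\mathbf{0})=0$ into $\partial_1 u_2(\mathbf{0})=\partial_1 u_3(\mathbf{0})=\partial_2 u_3(\mathbf{0})=0$, so all nine entries of $\nabla\mathbf{u}(\mathbf{0})$ vanish. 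I expect the only real difficulty to lie in the second paragraph: a priori the $\lambda\mathcal{R}(\partial_3 u_3)$ boundary contribution is only $\mathcal{O}(s^{-1})$, larger than the $\mathcal{O}(s^{-2})$ strain term that must survive, so one has to extract the exact cancellation supplied by \eqref{eq:v0 int} and \eqref{eq:240 ratio} before the H\"older remainder can be absorbed—this, together with the uniform control of the three source integrals, is the technical heart of the argument.
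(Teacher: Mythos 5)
Your proposal is correct, and on the in-plane block and on parts (b)--(d) it is essentially the paper's own proof: dimension reduction (Lemma~\ref{lem:3uv}, Remark~\ref{rem:31}, Lemma~\ref{lem:RO}), a CGO integral identity for \eqref{eq:3Du12} leading via Lemma~\ref{lem:conclusion} to $\mathcal{R}(\partial_1u_1)(\mathbf 0)=\mathcal{R}(\partial_2u_2)(\mathbf 0)$ and $\mathcal{R}(\partial_1u_2)(\mathbf 0)+\mathcal{R}(\partial_2u_1)(\mathbf 0)=0$, followed by the boundary-condition algebra \eqref{eq:BC3}. Two differences are worth recording. First, a minor one: you assemble the in-plane identity by combining the acoustic and elastic Green identities as in the two-dimensional Lemma~\ref{lem:28 int}, whereas the paper's Lemma~\ref{lem:3u12} uses only the elastic identity \eqref{eq:green1}; the $\mathcal{O}(1/s)$ terms you flag as the ``technical heart''---those carrying $\mathcal{R}(v)(\mathbf 0)$ and $\lambda\mathcal{R}(\partial_3 u_3)(\mathbf 0)$---cancel between the two faces by themselves, owing to the opposite signs in $\mathbf p\cdot\nu_M=e^{\bsi(\theta_M-\phi)}$ and $\mathbf p\cdot\nu_m=-e^{\bsi(\theta_m-\phi)}$ (this is exactly \eqref{eq:I1+*} added to \eqref{eq:I1-*}, and \eqref{eq:I3+*} added to \eqref{eq:I3-*}, in Lemma~\ref{lem:u12}), so the extra acoustic identity is harmless but unnecessary; the cancellation you anticipated is precisely the mechanism the paper exploits. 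Second, a genuine one: for the entries $(\nabla^s\mathbf u)_{13}(\mathbf 0)$ and $(\nabla^s\mathbf u)_{23}(\mathbf 0)$ the paper runs a second CGO analysis on the reduced $u_3$-system \eqref{eq:3Du3} (Lemma~\ref{lem:35u3}, the identity \eqref{eq:gra3u3}, then separation of real and imaginary parts to reach \eqref{eq:Ru123}), whereas you read them off the third component of $T_\nu\mathbf u+v\nu=\mathbf 0$ in \eqref{eq:transR3}, which equals $2\mu[(\nabla^s\mathbf u)_{31}\nu_1+(\nabla^s\mathbf u)_{32}\nu_2]$ because $\nu^{(3)}=0$, evaluated at the edge point along both faces, and then invoke the linear independence of $\nu_M$ and $\nu_m$. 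This is valid: under the $C^{1,\alpha_1}$ assumption the variational traction coincides with the classical one, so the transmission condition holds pointwise up to the closed faces---a fact the paper itself relies on when deriving \eqref{eq:BC3}---and it eliminates Lemma~\ref{lem:35u3} altogether, giving a shorter argument; the paper's CGO route buys robustness, in that it never evaluates the boundary condition at the corner point itself. Pushed to its end, your observation yields even more: applying the same two-normal argument to the first two components of the traction condition gives the vanishing of the $2\times2$ matrix $(\lambda\nabla\cdot\mathbf u+v)I+2\mu$ times the upper-left block of $\nabla^s\mathbf u$ at $\mathbf 0$, which already encodes the in-plane relations of (a) together with \eqref{eq:thm31 v0}, so the entire theorem could in principle be obtained without any CGO analysis.
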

\begin{rem}\label{Rem3-3}
Compared with the geometrical structures of the AE transmission functions $(v, \mathbf{u})$ for the 2D case in Theorem \ref{thm:thm21}, the strain $\nabla^s \mathbf{u}({\bf x}_c)$ at the edge corner point $\mathbf{x}_c$ is a diagonal matrix with diagonal components $\partial_j u_j({\bf x}_c)$ ($j=1, 2, 3$). In contrast, for the 2D case, $\nabla^s \mathbf{u}({\bf x}_c)$ is a scalar  matrix only depending on $\partial_1 u_1(\mathbf{x}_c)$. Furthermore,  to derive similar conclusions as for (b)-(d) in Theorem \ref{thm:thm21} for the 3D case, we need additional assumptions  $\partial_3 u_\ell(\mathbf x_c)=0,\  \ell=1,2,3. $ Due to the more complex geometrical configuration of the edge corner, the reduction operator is needed, which results in a more intricate integral identity around the edge corner. These additional assumptions in (b)-(d) on $\mathbf{u}$ are necessary to establish the geometrical structure for the AE transmission  functions $(v, \mathbf{u})$. 
\end{rem}

The proof of Theorem \ref{thm:3Ru123} is given in subsection \ref{subsec:32}. According to Remark \ref{rem:31}, we shall divide the proof of Theorem \ref{thm:3Ru123} into two separate parts.
In the first part, we deal with the system \eqref{eq:3Du12}, where we will perform a thorough analysis to prove the vanishing property of $\mathcal{R}(\mathbf{u}^{(1,2)})$ and $\mathcal{R}(\mathbf{v})$. In the second part, we shift our focus to the system \eqref{eq:3Du3} and perform a detailed analysis of its own. Before that, we need some necessary lemmas in the following subsection.

\vspace{2mm}

\subsection{Auxiliary lemmas of proving Theorem \ref{thm:3Ru123}} 

 In the following, Lemmas \ref{lem:3u12} and \ref{lem:u12}
 are derived by a rigorous analysis of the system \eqref{eq:3Du12}, which serve as fundamental ingredients for further deductions.


\begin{lem}\label{lem:3u12} 
	Let $S_h, \Lambda_h$ and $\Gamma_h^{\pm}$ be defined in \eqref{eq:Sh} and \eqref{eq:gammaEX}. Suppose that $\mathcal{R}(\mathbf{u}^{(1,2)}) \in H^1(S_h )^2$ and $\mathcal{R}(v) \in H^1(S_h)$  satisfy \eqref{eq:3Du12}, where $ \mathbf{u}^{(1,2)} \in H^1(S_h\times (-M,M))^2$ and $ v \in H^1(S_h\times (-M,M))$, Recall that CGO solutions $v_0 (\mathbf{x^{\prime}})$ and $ \mathbf{u}_0(\mathbf{x^{\prime}})$ are defined in \eqref{eq:cgo} and \eqref{eq:u0 cgo}, respectively. Denote
		\begin{equation}\label{eq:int inf2}
		\begin{split}
			I_{1,\star}^\pm &=\int_{\Gamma^\pm_h} v_0 (\mathbf{x^{\prime}}) \mathcal{R}(v) (\mathbf{x^{\prime}})\rmd \sigma,\quad I_{2,\star}^\pm = \int_{\Gamma_h^\pm  }v_0 (\mathbf{x^{\prime}}) \big[\mathcal{R}(\mathbf u^{(1,2)}) (\mathbf{x^{\prime}}) \cdot \mathbf p \big] \rmd \sigma,\\
			 I_{3,\star}^\pm & =\int_{\Gamma_h^\pm  }v_0 (\mathbf{x^{\prime}}) \mathcal{R}(\partial_3 u_3)(\mathbf{x^{\prime}}) \rmd \sigma, \quad
			I_{4,\star}=\int_{S_h}  v_0 (\mathbf{x^{\prime}}) \big[\mathbf{G}_1^{(1,2)} (\mathbf{x^{\prime}}) \cdot \mathbf{p}\big]\rmd \mathbf x^{\prime},\\ 
			 I_{\Lambda_h,2}^\star &=\int_{\Lambda_h}\big(\mathcal{R}(\mathbf u ^{(1,2)})(\mathbf{x^{\prime}}) \cdot T_\nu \mathbf u_0 (\mathbf{x^{\prime}}) -\mathbf u_0 (\mathbf{x^{\prime}}) \cdot T_\nu \mathcal{R}(\mathbf u^{(1,2)})(\mathbf{x^{\prime}})\big)\mathrm d \sigma.
		\end{split}
	\end{equation}
	Then we have
		\begin{align}
		0=& (\mathbf p \cdot \nu_M^{(1,2)}) I_{1,\star}^+ +(\mathbf p \cdot \nu_m^{(1,2)}) I_{1,\star}^- + 2 \mu \big( (\brho \cdot \nu_M^{(1,2)}) I_{2,\star}^+
		+ (\brho \cdot \nu_m^{(1,2)}) I_{2,\star}^-\big) \notag\\
		& + \lambda \big( (\mathbf p \cdot \nu_M^{(1,2)}) I_{3,\star}^+ + (\mathbf p \cdot \nu_m^{(1,2)}) I_{3,\star}^- \big) + I_{4,\star} +I_{\Lambda,2}^\star \label{eq:int u12}
	\end{align}
where $\brho, \mathbf{p} $ are defined in \eqref{eq:cgo} and \eqref{eq:p def}, respectively. Additionally,$\nu_m^{(1,2)} ,\nu_M^{(1,2)} \in \mathbb{R}^2$ are defined in \eqref{eq:nu pm}.
\end{lem}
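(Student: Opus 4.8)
The plan is to mirror the derivation of the two-dimensional identity \eqref{eq:int1 imp} in Lemma \ref{lem:28 int}, now applied to the reduced elastic system \eqref{eq:3Du12} together with the Lam\'e CGO solution $\mathbf u_0=\mathbf p\,v_0$. First I would verify that the non-$H^2$ Green identity \eqref{eq:green1} is applicable: since $\mathcal R(\mathbf u^{(1,2)})\in H^1(S_h)^2$ and the source $\mathbf G_1^{(1,2)}$ is assembled from $\mathcal R(\mathbf u^{(1,2)})$ together with $x_3$-integrals of $u_1,u_2,\partial_1u_3,\partial_2u_3$ against the fixed cutoffs $\psi',\psi''$, one checks $\mathbf G_1^{(1,2)}\in L^2(S_h)^2$, so that $\mathcal R(\mathbf u^{(1,2)})\in H^1_{\mathcal L_{\lambda,\mu}}$; likewise $\mathbf u_0\in H^1_{\mathcal L_{\lambda,\mu}}$ by \eqref{eq:u0 lame}. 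Hence \eqref{eq:green1} may be applied to the pair $(\mathcal R(\mathbf u^{(1,2)}),\mathbf u_0)$ on the Lipschitz corner domain $S_h$.

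Invoking $\mathcal L_{\lambda,\mu}\mathbf u_0=\mathbf 0$ from \eqref{eq:u0 lame} and $\mathcal L\,\mathcal R(\mathbf u^{(1,2)})=\mathbf G_1^{(1,2)}$ from \eqref{eq:3Du12}, the volume side of \eqref{eq:green1} collapses to $-\int_{S_h}\mathbf u_0\cdot\mathbf G_1^{(1,2)}\,\rmd\mathbf x'$, which, since $\mathbf u_0=\mathbf p\,v_0$, is exactly $-I_{4,\star}$. On the boundary side I would split $\partial S_h=\Gamma_h^+\cup\Gamma_h^-\cup\Lambda_h$; the contribution over the arc $\Lambda_h$ is precisely $I_{\Lambda_h,2}^\star$ by its definition in \eqref{eq:int inf2}.

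The substantive part is the treatment of the two straight sides. On $\Gamma_h^\pm$ I would insert the traction boundary condition from \eqref{eq:3Du12}, namely $T_{\nu^{(1,2)}}\mathcal R(\mathbf u^{(1,2)})=-\bigl(\lambda\mathcal R(\partial_3u_3)+\mathcal R(v)\bigr)\nu^{(1,2)}$, so that $-T_\nu\mathcal R(\mathbf u^{(1,2)})\cdot\mathbf u_0=\bigl(\lambda\mathcal R(\partial_3u_3)+\mathcal R(v)\bigr)(\mathbf p\cdot\nu^{(1,2)})v_0$ after using $\mathbf u_0=\mathbf p\,v_0$; integrating and recognizing $I_{1,\star}^\pm$ and $I_{3,\star}^\pm$ produces the terms $(\mathbf p\cdot\nu_M^{(1,2)})(I_{1,\star}^++\lambda I_{3,\star}^+)$ and the analogous $\nu_m^{(1,2)}$ term. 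For the remaining piece $T_\nu\mathbf u_0\cdot\mathcal R(\mathbf u^{(1,2)})$ I would insert the explicit traction formula \eqref{eq:Tu}, $T_\nu\mathbf u_0=\mu[(\brho\cdot\nu)\mathbf p+(\mathbf p\cdot\nu)\brho]v_0$, yielding a term $\mu(\brho\cdot\nu)I_{2,\star}^\pm$ and a second term $\mu(\mathbf p\cdot\nu)\int_{\Gamma_h^\pm}v_0(\brho\cdot\mathcal R(\mathbf u^{(1,2)}))\,\rmd\sigma$. The decisive algebraic step is \eqref{eq:248 rho p}, i.e. $\brho=\mathrm i s\,\mathbf p$, which rewrites the latter integral as $\mathrm i s\,I_{2,\star}^\pm$ and converts $\mu(\mathbf p\cdot\nu)\mathrm i s=\mu(\brho\cdot\nu)$; the two traction contributions then coalesce into $2\mu(\brho\cdot\nu_M^{(1,2)})I_{2,\star}^+$ and $2\mu(\brho\cdot\nu_m^{(1,2)})I_{2,\star}^-$. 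Collecting the volume, arc, and side terms and rearranging gives exactly \eqref{eq:int u12}.

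I expect the main obstacle to be organizational rather than analytic: correctly bookkeeping the several boundary terms on $\Gamma_h^\pm$, keeping the $\nu_M^{(1,2)}$ versus $\nu_m^{(1,2)}$ factors straight through the substitutions, and invoking \eqref{eq:248 rho p} at the right moment so that the two halves of $T_\nu\mathbf u_0$ merge into the single coefficient $2\mu(\brho\cdot\nu)$. A secondary point demanding care is the preliminary check that $\mathbf G_1^{(1,2)}\in L^2(S_h)^2$, which is what legitimizes the use of the $H^1$-version of Green's identity in Lemma \ref{lem:green}.
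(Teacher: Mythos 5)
Your proposal is correct and follows essentially the same route as the paper's proof: apply the $H^1$-version Green identity \eqref{eq:green1} to the pair $\big(\mathcal{R}(\mathbf{u}^{(1,2)}),\mathbf{u}_0\big)$, insert the traction boundary condition from \eqref{eq:3Du12} and the explicit formulas $\mathbf u_0\cdot\nu=(\mathbf p\cdot\nu)v_0$, $T_\nu\mathbf u_0=\mu[(\brho\cdot\nu)\mathbf p+(\mathbf p\cdot\nu)\brho]v_0$, and then use $\brho=\mathrm i s\,\mathbf p$ (the paper's \eqref{eq:rela p}) to merge the two traction pieces into the coefficient $2\mu(\brho\cdot\nu)$ of $I_{2,\star}^\pm$. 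Your preliminary check that $\mathbf G_1^{(1,2)}\in L^2(S_h)^2$, which justifies invoking Lemma \ref{lem:green} without $H^2$-regularity, is a point the paper leaves implicit but is consistent with its framework.
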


\begin{proof}
	By the virtue of \eqref{eq:green1}, using \eqref{eq:u0 lame} and the boundary condition in \eqref{eq:3Du12}, one has
	\begin{align}\label{eq:int cgo u12}
		0=&\int_{S_h} \mathbf{u}_0(\mathbf{x^{\prime}}) \cdot \mathbf{G}_1^{(1,2)} (\mathbf{x^{\prime}}) \rmd \mathbf x^{\prime} + \int_{\Gamma_h^{\pm}} T_{\nu}\mathbf{u}_0 (\mathbf{x^{\prime}}) \cdot \mathcal{R}(\mathbf{u}^{(1,2)})(\mathbf{x^{\prime}}) \rmd \sigma \notag\\
		&+ \int_{\Gamma_h^{\pm}}\big[\lambda \mathcal{R}(\partial_3 u_3) (\mathbf{x^{\prime}}) \nu^{(1,2)} + \mathcal{R}(v) (\mathbf{x^{\prime}}) \nu^{(1,2)} \big] \cdot \mathbf{u}_0 (\mathbf{x^{\prime}})  \rmd \sigma + I_{\Lambda,2}^\star
	\end{align}
where $I_{\Lambda,2}^\star$ is defined in \eqref{eq:int inf2}.
Similar to \eqref{eq:Tu 238}, one has
\begin{equation}\label{eq:Tu1 238}
	\begin{split}
		\mathbf u_0 (\mathbf{x^{\prime}})\cdot \nu_M^{(1,2)}=(\mathbf p \cdot \nu_M^{(1,2)}) v_0 (\mathbf{x^{\prime}}) , \quad	T_\nu \mathbf u_0 (\mathbf{x^{\prime}}) |_{\Gamma_h^+}&=\mu [ ( \brho \cdot \nu_M^{(1,2)} )\mathbf p+(\mathbf p \cdot \nu_M^{(1,2)}) \brho ] v_0(\mathbf{x^{\prime}}),\\
		\mathbf u_0 (\mathbf{x^{\prime}})\cdot \nu_m^{(1,2)}=(\mathbf p \cdot \nu_m^{(1,2)}) v_0 (\mathbf{x^{\prime}}) ,\quad	 T_\nu \mathbf u_0 (\mathbf{x^{\prime}}) |_{\Gamma_h^-}&=\mu [ ( \brho \cdot \nu_m^{(1,2)} )\mathbf p+(\mathbf p \cdot \nu_m^{(1,2)}) \brho ] v_0(\mathbf{x^{\prime}}).
	\end{split}
\end{equation}
Besides, by the relationship of \eqref{eq:248 rho p}, we get
\begin{equation}\label{eq:rela p}
	(\mathbf{p} \cdot \nu^{(1,2)}) \int_{\Gamma_h^{\pm}} v_0 (\mathbf{x^{\prime}}) \big(\mathcal{R}(\mathbf{u}^{(1,2)})(\mathbf{x^{\prime}}) \cdot \brho\big) \rmd \sigma = (\brho \cdot \nu^{(1,2)}) \int_{\Gamma_h^{\pm}} v_0 (\mathbf{x^{\prime}}) \big(\mathcal{R}(\mathbf{u}^{(1,2)})(\mathbf{x^{\prime}}) \cdot \mathbf{p}\big) \rmd \sigma
	\end{equation}
Substituting \eqref{eq:Tu1 238},\eqref{eq:rela p} into \eqref{eq:int cgo u12}, after some algebraic calculations, we obtain \eqref{eq:int u12}.
\end{proof}
In the subsequent lemma, we thoroughly examine each integral term outlined in Lemma \ref{lem:3u12}.
\begin{lem}\label{lem:u12}
	Suppose that $(v,\mathbf{u}) \in  H^1(S_h \times (-M,M)) \times H^1(S_h \times (-M,M))^3 $ satisfying \eqref{eq:transR3}. Furthermore, assume that $ \mathbf{u} \in C^{1,\alpha_1}(\overline S_h \times [-M,M])^3$ and  $v \in C^{\alpha_2}(\overline S_h \times [-M,M])$ for $\alpha_1,\,\alpha_2 \in (0,1)$. Then the following integral estimations hold
	\begin{align}
		(\mathbf{p} \cdot \nu_M^{(1,2)})I_{1,\star}^+ & = -\frac{1}{s} \mathcal{R}(v)(\mathbf{0}) 
		  + \Oh(\frac{1}{s^{1+\alpha_2}}), \label{eq:I1+*}\\
		(\mathbf{p} \cdot \nu_m^{(1,2)}) I_{1,\star}^- & = \frac{1}{s} \mathcal{R}(v)(\mathbf{0}) 
		 + \Oh(\frac{1}{s^{1+\alpha_2}}), \label{eq:I1-*}		\\
		 (\brho \cdot \nu_M^{(1,2)})I_{2,\star}^+ & = - e^{- \bsi \phi} \big( e_1^\top \cdot \mathcal{R}(\mathbf{u}^{(1,2)})(\mathbf{0})\big) +  e_1^\top \nabla \mathcal{R}(\mathbf{u}^{(1,2)})(\mathbf{0}) \tau_M^{(1,2)} \frac{e^{-\bsi \theta_{M}}}{s}+ \Oh (\frac{1}{s^{1+\alpha_1}})\label{eq:I2+*},\\
		 (\brho \cdot \nu_m^{(1,2)})I_{2,\star}^+ & =  e^{- \bsi \phi} \big( e_1^\top \cdot \mathcal{R}(\mathbf{u}^{(1,2)})(\mathbf{0})\big) -  e_1^\top \nabla \mathcal{R}(\mathbf{u}^{(1,2)})(\mathbf{0}) \tau_m^{(1,2)} \frac{e^{-\bsi \theta_{m}}}{s}+ \Oh (\frac{1}{s^{1+\alpha_1}})\label{eq:I2-*},\\
		 	(\mathbf{p} \cdot \nu_M^{(1,2)})I_{3,\star}^+ & = -\frac{1}{s} \mathcal{R}(\partial_3 u_3)(\mathbf{0}) 
		 + \Oh(\frac{1}{s^{1+\alpha_1}}), \label{eq:I3+*}\\
		 (\mathbf{p} \cdot \nu_m^{(1,2)})I_{3,\star}^+ & = \frac{1}{s} \mathcal{R}(\partial_3 u_3)(\mathbf{0}) 
		 + \Oh(\frac{1}{s^{1+\alpha_1}}), \label{eq:I3-*}\
	\end{align}
as $s \rightarrow +\infty $, where $ \tau_m^{(1,2)} ,\tau_M^{(1,2)} ,\nu_m^{(1,2)} ,\nu_M^{(1,2)}\in \mathbb{R}^2$ be defined in\eqref{eq:taumM} and \eqref{eq:nu pm}, respectively.
\end{lem}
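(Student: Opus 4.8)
The plan is to prove each estimate by transplanting the planar argument of Lemma \ref{lem:estimate} to the dimension-reduced quantities. The enabling observation is that the reduction operator $\mathcal{R}$ preserves the local regularity at the edge: since $\mathbf{u}\in C^{1,\alpha_1}(\overline{S_h}\times[-M,M])^3$ and $v\in C^{\alpha_2}(\overline{S_h}\times[-M,M])$, the argument underlying Lemma \ref{lem:RO} gives $\mathcal{R}(\mathbf{u}^{(1,2)})\in C^{1,\alpha_1}(\overline{S_h})^2$ and $\mathcal{R}(v)\in C^{\alpha_2}(\overline{S_h})$, and since $\partial_3 u_3\in C^{\alpha_1}$ the same convolution-in-$x_3$ estimate yields $\mathcal{R}(\partial_3 u_3)\in C^{\alpha_1}(\overline{S_h})$. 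Hence I may expand each reduced function about the vertex $\mathbf{0}$, exactly as in \eqref{eq: u c1}: $\mathcal{R}(v)(\mathbf{x}')=\mathcal{R}(v)(\mathbf{0})+\delta(\mathcal{R}(v))$ with $|\delta(\mathcal{R}(v))|\lesssim|\mathbf{x}'|^{\alpha_2}$, $\mathcal{R}(\mathbf{u}^{(1,2)})(\mathbf{x}')=\mathcal{R}(\mathbf{u}^{(1,2)})(\mathbf{0})+r\,\nabla\mathcal{R}(\mathbf{u}^{(1,2)})(\mathbf{0})\boldsymbol{\tau}+\delta(\mathcal{R}(\mathbf{u}^{(1,2)}))$ with $|\delta(\mathcal{R}(\mathbf{u}^{(1,2)}))|\lesssim|\mathbf{x}'|^{1+\alpha_1}$, and $\mathcal{R}(\partial_3 u_3)(\mathbf{x}')=\mathcal{R}(\partial_3 u_3)(\mathbf{0})+\delta(\mathcal{R}(\partial_3 u_3))$ with $|\delta(\mathcal{R}(\partial_3 u_3))|\lesssim|\mathbf{x}'|^{\alpha_1}$.

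Next I would substitute these expansions into the line integrals $I_{1,\star}^{\pm}$, $I_{2,\star}^{\pm}$, $I_{3,\star}^{\pm}$ over $\Gamma_h^{\pm}$ defined in \eqref{eq:int inf2}. The constant terms generate $\int_{\Gamma_h^{\pm}}v_0\,\rmd\sigma$, which by \eqref{eq:v0 int} equals $-e^{\mathrm i(\phi-\theta_M)}/s+\Oh(e^{-s\delta h/2})$ on $\Gamma_h^+$ and $-e^{\mathrm i(\phi-\theta_m)}/s+\Oh(e^{-s\delta h/2})$ on $\Gamma_h^-$; the linear term in the $\mathbf{u}^{(1,2)}$-expansion generates $\int_0^h r\,e^{r\brho\cdot\boldsymbol{\tau}_{M/m}}\,\rmd r$, which by \eqref{eq:I2 253 est} with $\ell=1$ equals $e^{2\mathrm i(\phi-\theta_{M/m})}/s^2+\Oh(e^{-sh\delta/2})$; and the H\"older remainders are controlled through \eqref{eq:v0 gamma al}, producing $\Oh(s^{-(1+\alpha_2)})$ for the $\mathcal{R}(v)$-term, $\Oh(s^{-(2+\alpha_1)})$ for the linear $\mathcal{R}(\mathbf{u}^{(1,2)})$-term, and $\Oh(s^{-(1+\alpha_1)})$ for the $\mathcal{R}(\partial_3 u_3)$-term. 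This reproduces the individual asymptotic expansions of $I_{1,\star}^{\pm}$, $I_{2,\star}^{\pm}$, $I_{3,\star}^{\pm}$ in complete analogy with \eqref{eq:I1+ est}--\eqref{eq:I2- est}.

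Finally I would multiply each expansion by its scalar prefactor $\mathbf{p}\cdot\nu_{M/m}^{(1,2)}$ or $\brho\cdot\nu_{M/m}^{(1,2)}$, invoking \eqref{eq:rho 220a} and \eqref{eq:rho 220c}, namely $\mathbf{p}\cdot\nu_M=e^{\mathrm i(\theta_M-\phi)}$, $\mathbf{p}\cdot\nu_m=-e^{\mathrm i(\theta_m-\phi)}$, $\brho\cdot\nu_M=\mathrm i s\,e^{\mathrm i(\theta_M-\phi)}$, $\brho\cdot\nu_m=-\mathrm i s\,e^{\mathrm i(\theta_m-\phi)}$ (which hold verbatim for the reduced normals $\nu_{M/m}^{(1,2)}$). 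For $I_{1,\star}^{\pm}$ and $I_{3,\star}^{\pm}$ the phases cancel against $\mathbf{p}\cdot\nu_{M/m}^{(1,2)}$ to leave the stated $\mp1/s$ and $\pm1/s$ coefficients, giving \eqref{eq:I1+*}, \eqref{eq:I1-*}, \eqref{eq:I3+*}, \eqref{eq:I3-*}. For $I_{2,\star}^{\pm}$ I would additionally use $\mathbf{p}=-\mathrm i e^{-\mathrm i\phi}\mathbf{e}_1$ from \eqref{eq:p eqn}, so that $\mathcal{R}(\mathbf{u}^{(1,2)})(\mathbf{0})\cdot\mathbf{p}=-\mathrm i e^{-\mathrm i\phi}\,\mathbf{e}_1^\top\mathcal{R}(\mathbf{u}^{(1,2)})(\mathbf{0})$; combining this with $\brho\cdot\nu_M=\mathrm i s\,e^{\mathrm i(\theta_M-\phi)}$ collapses the phase factors to produce precisely the leading constant $-e^{-\mathrm i\phi}\big(\mathbf{e}_1^\top\mathcal{R}(\mathbf{u}^{(1,2)})(\mathbf{0})\big)$ and the $s^{-1}$ coefficient $\mathbf{e}_1^\top\nabla\mathcal{R}(\mathbf{u}^{(1,2)})(\mathbf{0})\boldsymbol{\tau}_M^{(1,2)}e^{-\mathrm i\theta_M}$ of \eqref{eq:I2+*}, and the analogous computation on $\Gamma_h^-$ yields the sign-reversed form \eqref{eq:I2-*}.

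The main obstacle I anticipate is not any single estimate, each of which mirrors Lemma \ref{lem:estimate}, but the two supporting points that legitimize the reduction to the two-dimensional computation: first, confirming that $\mathcal{R}$ preserves $C^{1,\alpha_1}$ and $C^{\alpha_2}$ regularity, including the $C^{\alpha_1}$ regularity of $\mathcal{R}(\partial_3 u_3)$, which is not literally subsumed by the $H^m$ and $C^{1,\alpha}$ statements of Lemma \ref{lem:RO} and must be obtained by the same integration-in-$x_3$ bound against the fixed cutoff $\psi$; and second, keeping the complex phase bookkeeping exact so that the cancellations against $\mathbf{p}\cdot\nu^{(1,2)}$ and $\brho\cdot\nu^{(1,2)}$ land on the precise constants displayed, rather than merely the correct orders in $s$.
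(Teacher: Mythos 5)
Your proposal is correct and follows essentially the same route as the paper's proof: establish that $\mathcal{R}$ preserves the H\"older/$C^{1,\alpha_1}$ regularity at the edge, expand the reduced functions $\mathcal{R}(v)$, $\mathcal{R}(\mathbf{u}^{(1,2)})$, $\mathcal{R}(\partial_3 u_3)$ about the vertex with H\"older remainders, and then repeat the computation of Lemma \ref{lem:estimate} using \eqref{eq:v0 int}, \eqref{eq:I2 253 est}, \eqref{eq:v0 gamma al} together with the phase identities \eqref{eq:rho 220a}, \eqref{eq:rho 220c} and \eqref{eq:p eqn}. In fact you supply more detail than the paper, which stops after the expansions and states that ``the subsequent argument is similar to Lemma \ref{lem:estimate}''; your observation that the $C^{\alpha_1}$ regularity of $\mathcal{R}(\partial_3 u_3)$ is not literally covered by the stated cases of Lemma \ref{lem:RO} but follows from the same integration-in-$x_3$ argument is a point the paper passes over silently.
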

\begin{proof} 
	Since $ \mathbf{u} \in C^{1,\alpha_1}(\overline S_h \times [-M,M])^3$ and $\mathbf{u}$ is defined in \eqref{eq:3nu}, we have $\mathbf{u}^{(1,2)} \in C^{1,\alpha_1}(\overline S_h \times [-M,M])^2 $ and $u_3 \in C^{1,\alpha_1}(\overline S_h \times [-M,M])$. By Lemma \ref{lem:RO}, it follows that $\mathcal{R}(\mathbf{u}^{(1,2)})(\mathbf{x^{\prime}}) \in C^{1,\alpha_1}(\overline S_h)^2$, $\mathcal{R}(\mathbf{u}_3)(\mathbf{x^{\prime}}) \in C^{1,\alpha_1}(\overline S_h )$ and $\mathcal{R}(\partial_3 u_\ell)(\mathbf{x^{\prime}}) \in C^{\alpha_1}(\overline S_h), \ell=1,2,3$. Therefore, we have
	\begin{align}
	&\mathcal{R}(\mathbf{u}^{(1,2)})(\mathbf{x^{\prime}}) =	\mathcal{R}(\mathbf{u}^{(1,2)})(\mathbf{0}) + r \nabla \mathcal{R}(\mathbf{u}^{(1,2)})(\mathbf{0}) \boldsymbol{\tau } + \delta \mathcal{R}(\mathbf{u}^{(1,2)})(\mathbf{x^{\prime}}), \label{eq:Ru0}\\
	&|\delta \mathcal{R}(\mathbf{u}^{(1,2)})(\mathbf{x^{\prime}})| \leq \|  \mathcal{R}(\mathbf{u}^{(1,2)})(\mathbf{x^{\prime}}) \|_{C^{1,\alpha _1} (\overline{S_h})^2 } |\mathbf x^{\prime}|^{1+\alpha_1},\notag\\
	&\mathcal{R}(u_3)(\mathbf{x^{\prime}})=\mathcal{R}(u_3)(\mathbf 0) + r \nabla \mathcal{R}(u_3)(\mathbf{0}) \boldsymbol{\tau } + \delta \mathcal{R}(u_3)(\mathbf{x^{\prime}}),\label{eq:Ru30}\\
	&|\delta \mathcal{R}(u_3)(\mathbf{x^{\prime}})| \leq \|  \mathcal{R}(u_3)(\mathbf{x^{\prime}}) \|_{C^{1,\alpha_1 } (\overline{S_h} )} |\mathbf x^{\prime}|^{1+\alpha_1},\notag\\
	&\mathcal{R}(\partial_3 u_\ell)(\mathbf{x^{\prime}})=\mathcal{R}(\partial_3 u_\ell)(\mathbf 0) + \delta \mathcal{R}(\partial_3 u_\ell)(\mathbf{x^{\prime}}),\quad \ell=1,2,3,\label{eq:Ru330}\\
	&|\delta \mathcal{R}(\partial_3 u_\ell)(\mathbf{x^{\prime}})| \leq \|  \mathcal{R}(\partial_3 u_\ell)(\mathbf{x^{\prime}}) \|_{C^{\alpha_1 } (\overline{S_h})} |\mathbf x^{\prime}|^{\alpha_1},\notag
	\end{align}
 where $\mathbf x^{\prime}=r\boldsymbol{\tau }\in \mathbb R^2$ with $\boldsymbol{\tau }=(\cos\theta, \sin \theta )$ and $r\in \mathbb R_+\cup \{0\}$. The subsequent argument is similar to Lemma \ref{lem:estimate}, here we omitted that.
\end{proof}
In the subsequent discussion, our primary focus lies on  \eqref{eq:3Du3}. We shall get some estimates for the integral equations and obtain a crucial formula that plays a significant role in proving Theorem \ref{thm:3Ru123}.
 

\begin{lem}\label{lem:35u3}
Recall that CGO solution $v_0 (\mathbf{x^{\prime}})$ is defined in \eqref{eq:cgo}.  Denote
			\begin{align}
				&\hat I_{1}^\pm =\int_{\Gamma^\pm_h} v_0 (\mathbf{x^{\prime}}) \mathcal{R}(u_3) (\mathbf{x^{\prime}})\rmd \sigma,  
				, \notag\\
				&\hat I_{2}^\pm  =\int_{\Gamma_h^\pm  }v_0 (\mathbf{x^{\prime}})
				\begin{bmatrix}
					\mathcal{R}(\partial_3 u_1)(\mathbf{x^{\prime}})\\ \mathcal{R}(\partial_3 u_2)(\mathbf{x^{\prime}})
				\end{bmatrix} \cdot \nu_M^{(1,2)}  \rmd \sigma, 
				\quad \hat I_{3}=\int_{S_h}  v_0 (\mathbf{x^{\prime}}) \mathbf{G}_1^{(3)} (\mathbf{x^{\prime}}) \rmd \mathbf x^{\prime},\notag\\ 
				&\hat I_{\Lambda_h,2} =\int_{\Lambda_h}\big(\mathcal{R}( u _3)(\mathbf{x^{\prime}}) \partial_{\nu} v_0 (\mathbf{x^{\prime}}) - v_0 (\mathbf{x^{\prime}})
				\partial_{\nu} \mathcal{R}(u_3)  (\mathbf{x^{\prime}})\mathrm d \sigma, \label{eq:u3v}
			\end{align}
where $\mathbf{u}$  and $\nu$ be defined in \eqref{eq:3nu}. Under the same assumption in Lemma \ref{lem:u12}, the following integral estimations hold:		
\begin{align}
&(	\mathbf{\brho} \cdot \nu_M^{(1,2)} ) \hat I_1^+= -\bsi \mathcal{R}(u_3)(\mathbf{0}) +  \frac{\bsi e^{\bsi(\phi-\theta_M)}}{s} \nabla \mathcal{R}(u_3)(\mathbf{0})  \cdot \tau_M^{(1,2)} + \Oh(\frac{1}{s^{1+\alpha_1}}),\label{eq:I1+h}\\
&(	\mathbf{\brho} \cdot \nu_m^{(1,2)} ) \hat I_1^-= \bsi \mathcal{R}(u_3)(\mathbf{0}) - \frac{\bsi e^{\bsi(\phi-\theta_M)}}{s} \nabla \mathcal{R}(u_3)(\mathbf{0})  \cdot \tau_m^{(1,2)} + \Oh(\frac{1}{s^{1+\alpha_1}}),\label{eq:I1-h}\\
& \hat I_2^+ = - \frac{e^{\bsi (\phi-\theta_{M})}}{s} \begin{bmatrix}
	\mathcal{R}(\partial_3 u_1)(\mathbf{0})\\ \mathcal{R}(\partial_3 u_2)(\mathbf{0})
\end{bmatrix} \cdot \nu_M^{(1,2)} +\Oh(\frac{1}{s^{1+\alpha_1}}),\label{eq:I3+h}\\
& \hat I_2^- = - \frac{e^{\bsi (\phi-\theta_{m})}}{s} \begin{bmatrix}
	\mathcal{R}(\partial_3 u_1)(\mathbf{0})\\ \mathcal{R}(\partial_3 u_2)(\mathbf{0})
\end{bmatrix}  \cdot \nu_m^{(1,2)} +\Oh(\frac{1}{s^{1+\alpha_1}}),\label{eq:I3-h}
\end{align}		
as $s\rightarrow + \infty$.
Furthermore, we can observe that
\begin{align}\label{eq:gra3u3}
	\bsi  \nabla\mathcal{R}(u_3)(\mathbf{0}) \cdot \big(\tau _M^{(1,2)} e^{-\bsi \theta_M} -  \tau _m^{(1,2)} e^{-\bsi \theta_m }\big)
	 - \begin{bmatrix}
		\mathcal{R}(\partial_3 u_1)(\mathbf{0})\\ \mathcal{R}(\partial_3 u_2)(\mathbf{0})
	\end{bmatrix}  \cdot \bigg(  \nu_M^{(1,2)} e^{-\bsi \theta_M}+  \nu_m^{(1,2)} e^{-\bsi \theta_m} \bigg)=0.
	\end{align}
		\end{lem}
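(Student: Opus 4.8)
The plan is to adapt the CGO-testing scheme of Lemma~\ref{lem:28 int} and Lemma~\ref{lem:estimate} to the scalar subsystem \eqref{eq:3Du3} governing the reduced third component $\mathcal{R}(u_3)$. First I would apply the second Green identity \eqref{eq:GIN1} on $S_h$ to the pair $v_0(\mathbf{x^{\prime}})$ and $\mathcal{R}(u_3)(\mathbf{x^{\prime}})$; both lie in $H^1_\Delta(S_h)$ since $\Delta^\prime v_0=0$ by \eqref{eq:v0 delta} and $\mu\Delta^\prime\mathcal{R}(u_3)=\mathbf{G}_1^{(3)}\in L^2(S_h)$ by \eqref{eq:3Du3}. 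Splitting $\partial S_h=\Gamma_h^+\cup\Gamma_h^-\cup\Lambda_h$, I would substitute $\partial_\nu v_0=(\brho\cdot\nu_M)v_0$ on $\Gamma_h^+$ and $\partial_\nu v_0=(\brho\cdot\nu_m)v_0$ on $\Gamma_h^-$ from \eqref{eq:234 part}, and use the Neumann-type boundary condition in \eqref{eq:3Du3} to replace $\partial_\nu\mathcal{R}(u_3)$ on $\Gamma_h^\pm$ by $-[\mathcal{R}(\partial_3 u_1),\mathcal{R}(\partial_3 u_2)]^\top\cdot\nu^{(1,2)}$. This produces the integral identity
\begin{equation}\notag
(\brho\cdot\nu_M^{(1,2)})\hat I_1^+ + (\brho\cdot\nu_m^{(1,2)})\hat I_1^- + \hat I_2^+ + \hat I_2^- = -\mu^{-1}\hat I_3 - \hat I_{\Lambda_h,2},
\end{equation}
which is the $u_3$-analogue of \eqref{eq:int1 imp}.

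Next I would establish the four estimates \eqref{eq:I1+h}--\eqref{eq:I3-h} exactly as in Lemma~\ref{lem:estimate}. The regularity transfer of Lemma~\ref{lem:RO} gives $\mathcal{R}(u_3)\in C^{1,\alpha_1}(\overline{S_h})$ and $\mathcal{R}(\partial_3 u_\ell)\in C^{\alpha_1}(\overline{S_h})$, so I would Taylor-expand $\mathcal{R}(u_3)(\mathbf{x^{\prime}})=\mathcal{R}(u_3)(\mathbf{0})+r\nabla\mathcal{R}(u_3)(\mathbf{0})\boldsymbol{\tau}+\delta\mathcal{R}(u_3)$ and $\mathcal{R}(\partial_3 u_\ell)(\mathbf{x^{\prime}})=\mathcal{R}(\partial_3 u_\ell)(\mathbf{0})+\delta\mathcal{R}(\partial_3 u_\ell)$ on the two edges. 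Inserting these into $\hat I_1^\pm,\hat I_2^\pm$ and evaluating the boundary integrals of $v_0$ via \eqref{eq:v0 int} and \eqref{eq:I2 253 est}, together with the inner products \eqref{eq:rho 220a}--\eqref{eq:rho 220b}, yields the stated leading terms; the H\"older remainders are controlled by \eqref{eq:v0 gamma al} and \eqref{eq:v0 gamma}, giving the $\Oh(s^{-1-\alpha_1})$ errors. Crucially, $(\brho\cdot\nu_M^{(1,2)})\hat I_1^+$ and $(\brho\cdot\nu_m^{(1,2)})\hat I_1^-$ each carry an $\Oh(1)$ leading term $\mp\bsi\,\mathcal{R}(u_3)(\mathbf{0})$.

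Finally, I would observe that $\hat I_3=\Oh(s^{-2})$, because $\mathbf{G}_1^{(3)}$ is bounded on $\overline{S_h}$ and $\int_{S_h}|v_0|\,\rmd\mathbf{x^{\prime}}=\Oh(s^{-2})$ by \eqref{eq:v0 alpha}, while $\hat I_{\Lambda_h,2}$ decays exponentially by the arc estimates of Lemma~\ref{lem:24 u tu} (combined with a trace/H\"older bound as in Lemma~\ref{lem:I}). Substituting the four estimates into the integral identity, the two $\Oh(1)$ contributions $\mp\bsi\,\mathcal{R}(u_3)(\mathbf{0})$ cancel, so the whole left side is $\Oh(s^{-1})$; factoring out the nonzero scalar $e^{\bsi\phi}/s$, multiplying through by $s\,e^{-\bsi\phi}$, and letting $s\to+\infty$ (so that the $\Oh(s^{-2})$ right side and the $\Oh(s^{-\alpha_1})$ remainders vanish) isolates the limiting relation \eqref{eq:gra3u3}. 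The main obstacle I anticipate is the bookkeeping in this cancellation step: the desired gradient relation lives at the subleading order $\Oh(s^{-1})$, so one must carry the expansions of $\hat I_1^\pm$ one order beyond the leading $\mathcal{R}(u_3)(\mathbf{0})$ term and verify that the $\Oh(1)$ pieces annihilate before the limit is taken; keeping the $e^{\bsi(\phi-\theta_M)}$ and $e^{\bsi(\phi-\theta_m)}$ phase factors consistent across the tangential ($\hat I_1^\pm$) and normal ($\hat I_2^\pm$) contributions is the delicate part.
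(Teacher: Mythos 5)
Your proposal is correct and follows essentially the same route as the paper's proof: the same Green-identity application of \eqref{eq:GIN1} to $v_0$ and $\mathcal{R}(u_3)$ yielding the identity \eqref{eq:3Du31}, the same Taylor expansions via Lemma \ref{lem:RO} with remainders controlled by \eqref{eq:v0 gamma al} and \eqref{eq:I2 253 est}, and the same final step of noting $\hat I_3=\Oh(s^{-2})$, exponential decay of $\hat I_{\Lambda_h,2}$, cancellation of the $\Oh(1)$ terms $\mp\bsi\,\mathcal{R}(u_3)(\mathbf{0})$, and multiplication by $s/e^{\bsi\phi}$ before letting $s\rightarrow+\infty$. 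Your explicit flagging of the order-by-order bookkeeping (that \eqref{eq:gra3u3} lives at the subleading $\Oh(s^{-1})$ level) is a point the paper passes over silently, but the argument is the same.
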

\begin{proof}
 
From \eqref{eq:GIN1} and \eqref{eq:v0 delta} along with the boundary condition in \eqref{eq:3Du3}, one has
	\begin{equation}\label{eq:3Du31}
		(\brho \cdot \nu_M^{(1,2)}) \hat I_1^+ + 	(\brho \cdot \nu_m^{(1,2)}) \hat I_1^- + \hat I_2^+ + \hat I_2^- + \mu^{-1} \hat I_3 + \hat I_{\Lambda_h,2} = 0,
	\end{equation}
	then substituting \eqref{eq:Ru30}  into the expression of $\hat I_1^{\pm}$ defined in \eqref{eq:u3v}, and using  \eqref{eq:rho 220a}, it arrives at
	\begin{align}
		(\brho \cdot \nu_M^{(1,2)}) \hat I_1^+= \bsi s e^{\bsi (\theta_M- \phi)} \int_{\Gamma_h^+} v_0(\mathbf{x^\prime})\big[\mathcal{R}(\mathbf{u}_3)(\mathbf 0) + r \nabla \mathcal{R}(\mathbf{u}_3)(\mathbf{0}) \tau_M^{(1,2)}\big] \rmd \sigma + r_{\hat I_1^+}\label{eq:I1h ex},\\
		(\brho \cdot \nu_m^{(1,2)}) \hat I_1^-= - \bsi s e^{\bsi( \theta_m- \phi)} \int_{\Gamma_h^+} v_0(\mathbf{x^\prime})\big[\mathcal{R}(\mathbf{u}_3)(\mathbf 0) + r \nabla \mathcal{R}(\mathbf{u}_3)(\mathbf{0}) \tau_m^{(1,2)}\big] \rmd \sigma + r_{\hat I_1^+},\notag
	\end{align}
 where
\begin{equation}\notag
	r_{\hat I_1^{\pm}}=\int_{\Gamma_h^{\pm}}v_0(\mathbf{x^{\prime}}) (\delta \mathcal{R}(u_3)) \rmd \sigma.
\end{equation}
By Cauchy-Schwarz inequality, \eqref{eq:v0 gamma al},  and \eqref{eq:Ru30}, we obtain that
\begin{align}\label{eq:I1hr}
	\left|r_{\hat I_1^\pm }\right| &\leq   \|\mathcal{R} (u_3)\|_{C^{1,\alpha_1} (\overline{S_h} \times (-M,M) )   } \int_{\Gamma_h^\pm  } | v_0(\mathbf x^{\prime})| |\mathbf x^{\prime}|^{1+\alpha_1 }\rmd \sigma\leq \frac{ \Gamma(\alpha_1+2) }{ (\delta s)^{\alpha_1+2}} + \Oh(e^{-s\delta h/2} ),
\end{align}
as $s\rightarrow + \infty. $ Substituting \eqref{eq:I2 253 est} and \eqref{eq:I1hr} into \eqref{eq:I1h ex}, we can obtain \eqref{eq:I1+h} and \eqref{eq:I1-h}.

 In addition to that,\eqref{eq:I3+h} and \eqref{eq:I3-h} can be subjected to a similar framework, which is omitted. According to the expression of $\hat I_{\Lambda_h,2}$ be defined in \eqref{eq:u3v}, and using Lemma \ref{lem:I}, it is clear that $I_{\Lambda_h,2}^{\star}$ decays exponentially as $s \rightarrow + \infty$. Besides, from the expression of $\hat I_3$ defined in \eqref{eq:u3v}, we obtain
\begin{equation}\label{eq:3Su12}
	\hat{I}_{3}= \Oh(\frac{1}{s^2}) \quad \mbox{as} \quad  s \rightarrow + \infty.
\end{equation}
Therefore, substituting \eqref{eq:I1+h}--\eqref{eq:I3-h} and \eqref{eq:3Su12} into  \eqref{eq:3Du31}, after that multiplying $s/e^{\bsi \phi}$ on both sides, then let $s\rightarrow + \infty$, we can obtain \eqref{eq:gra3u3}.
\end{proof}

After establishing Lemma \ref{lem:3u12}  to Lemma \ref{lem:35u3}, we are now ready to proceed with the proof of the Theorem \ref{thm:3Ru123}.

\subsection{The proof of Theorem \ref{thm:3Ru123}}\label{subsec:32}
\begin{proof}
According to the expression of $I_{\Lambda_h,2}^{\star}$ be defined in \eqref{eq:int inf2}, and using Lemma \ref{lem:I}, we can easily see that $I_{\Lambda_h,2}^{\star}$ decays exponentially as $s \rightarrow + \infty$. Besides, From the expression of $I_{4,\star}$ be defined in \eqref{eq:int inf2}, we can obtain that
	\begin{equation}\label{eq:Su12}
		I_{4,\star}= \Oh(\frac{1}{s^2}).
	\end{equation}
	Therefore, substituting \eqref{eq:I1+*}--\eqref{eq:I3-*} and \eqref{eq:Su12} into  \eqref{eq:int u12}, after that multiplying $s$ on both sides, we can get 
	\begin{equation}\label{eq:Ru0}
		e_1^\top \cdot (\nabla \mathcal{R}(\mathbf{u}^{(1,2)})(\mathbf{0}) \tau_M^{(1,2)}) e^{-\bsi \theta_{M}} -  e_1^\top \cdot(\nabla \mathcal{R}(\mathbf{u}^{(1,2)})(\mathbf{0}) \tau_m^{(1,2)}) e^{-\bsi \theta_{m}}=0,
	\end{equation}
	where $e_1=(1,\bsi)^\top$ and $\tau_m^{(1,2)} ,\tau_M^{(1,2)} \in \mathbb{R}^2$ be defined in\eqref{eq:taumM}. Then complying with Lemma \ref{lem:conclusion}, we can obtain
	\begin{equation}\label{eq:Ru01}
		\mathcal{R}(\partial_1 u_1) (\mathbf 0)= \mathcal{R}(\partial_2 u_2) (\mathbf 0) \mbox{ and } \mathcal{R}(\partial_2 u_1) (\mathbf 0)+\mathcal{R}(\partial_1 u_2) (\mathbf 0) =0.
	\end{equation}
By utilizing equation \eqref{eq:gra3u3}
we know that
\begin{align}
	\nabla\mathcal{R}(u_3)(\mathbf{0})^\top \cdot \tau _M^{(1,2)} e^{-\bsi \theta_M}  = & \int_{-L}^L \psi(x_3) \begin{bmatrix}
		\partial_1 u_3 & \partial_2 u_3
	\end{bmatrix}(\mathbf{x^{\prime}}, x_3) \rmd x_3 
	\begin{bmatrix}
		\cos \theta_M \\ \cos \theta_m
	\end{bmatrix} e^{-\bsi \theta_M}\notag \\
	= & \mathcal{R}(\partial_1 u_3) (\mathbf{0}) \cos^2 \theta_M + \mathcal{R}(\partial_2 u_3) (\mathbf{0}) \sin \theta_M \cos \theta_M\label{eq:3R12u3M}\\
	&- \bsi \big( \mathcal{R}(\partial_1 u_3) (\mathbf{0}) \sin \theta_M \cos \theta_M +  \mathcal{R}(\partial_2 u_3) (\mathbf{0}) \sin^2 \theta_M \big).\notag
\end{align}
Using a similar way, we obtain that
\begin{align}
	\nabla\mathcal{R}(u_3)(\mathbf{0})^\top \cdot \tau _m^{(1,2)} e^{-\bsi \theta_m}  
	= & \mathcal{R}(\partial_1 u_3) (\mathbf{0}) \cos^2 \theta_m + \mathcal{R}(\partial_2 u_3) (\mathbf{0}) \sin \theta_m \cos \theta_m\label{eq:3R12u3m}\\
	&- \bsi \big( \mathcal{R}(\partial_1 u_3) (\mathbf{0}) \sin \theta_m \cos \theta_m +  \mathcal{R}(\partial_2 u_3) (\mathbf{0}) \sin^2 \theta_m \big).\notag
\end{align}
Furthermore, by separating the real and imaginary parts, after some straightforward calculations, we can derive the following relational expressions:
	\begin{equation}\label{eq:Ru123}
		\mathcal{R}(\partial_1 u_3) (\mathbf 0) + \mathcal{R}(\partial_3 u_1) (\mathbf 0) =0 \mbox{ and } \mathcal{R}(\partial_2 u_3) (\mathbf 0)+\mathcal{R}(\partial_3 u_2) (\mathbf 0) =0.
		\end{equation}
By combining \eqref{eq:Ru01} with \eqref{eq:Ru123}, we can deduce that \eqref{eq:thm31} holds by the definition of reduction operator $\mathcal{R}$ in \eqref{eq:RO}.

	Recall that $\nu_m^{(1,2)}$ is the outward unit normal vector to $\Gamma_h^-$, as defined in \eqref{eq:nu pm}. For convenience, let $\theta_m=0$. Substituting \eqref{eq:nu pm} into the boundary condition given in \eqref{eq:3Du12} and taking $\mathbf{x^{\prime}} = \mathbf{0}$, we can deduced that 
	\begin{equation}\label{eq:BC3}
		\lambda \mathcal{R}(\partial_1 u_1)(\mathbf{0}) + (\lambda+2\mu) \mathcal{R}(\partial_2 u_2) (\mathbf{0}) + \lambda\mathcal{R}(\partial_3 u_3) (\mathbf{0}) + \mathcal{R}(v)(\mathbf{0})=0
	\end{equation}
	Basing on the assumption $\partial_3 u_3(\mathbf{0})=0$ and using \eqref{eq:Ru01}, we readily obtain \eqref{eq:thm31 v0}.  Furthermore, using the assumption that $\partial_1 u_1(\mathbf{0})=0$ and \eqref{eq:thm31 v0}, we have \eqref{eq:3v0=0}. Thus, combining the equations \eqref{eq:thm31} and \eqref{eq:3v0=0} with the condition in \eqref{eq:3R1condition}, we obtain 
	\begin{equation}\notag
		\nabla \mathbf{u}(\mathbf{0})=\mathbf{0}.
	\end{equation}
 The proof is completed.
\end{proof}

\section{Visbility and unique recovery results for the inverse scattering problem}\label{sec:ip}

In this section, we consider the scattering problem described by \eqref{eq:transea1} in $\mathbb R^2$, which models the corresponding scattering phenomena between an acoustic medium embedded in a homogeneous elastic background and an elastic incident wave fulfilling \eqref{eq:ei1}. Our focus is on the visibility and unique identifiability of the acoustic medium. First, we demonstrate that an acoustic medium scatterer with a convex planar corner must scatter for any incident compressional or shear elastic wave. Following this, we show that a convex polygonal acoustic scatterer within a homogeneous elastic medium can be uniquely determined by at most three far-field measurements in generic scenarios. The key to our analysis is the vanishing property of AE transmission eigenfunctions near a convex planar corner, which plays a crucial role.



Let $\Omega$ be a bounded Lipschitz domain in $\mathbb{R}^2$ with a connected complement $\mathbb{R}^2 \backslash \overline\Omega$. In what follows, let $\mathbf{d} \in \mathbb{S}^1$ denote the incident direction, $\mathbf{d}^\perp \in \mathbb{S}^1$ be orthogonal to $\mathbf{d} $. And $k_p, k_s$ are compressional and shear wave numbers defined in \eqref{eq:kpks}. Consider an incident elastic plane wave field $\mathbf{u}^{in}$ given by
\begin{equation}\label{eq:planekpks}
	\mathbf{u}^i:=\mathbf{u}^i(\mathbf{x};k_p,k_s,\mathbf{d}) = \alpha_p \mathbf{d} e^{\bsi k_p \mathbf{x}\cdot \mathbf{d}} + \alpha_s \mathbf{d}^\perp e^{\bsi k_s \mathbf{x}\cdot \mathbf{d}} ,\quad \alpha_p,\alpha_s \in \mathbb{C}, \quad |\alpha_p| + |\alpha_s| \neq 0.
\end{equation}
In the context of wave propagation, the acoustic medium denoted by $\Omega\Subset \mathbb{R}^2$ with the density $\rho_b$ and modulus $\kappa$ plays a key role by interrupting the propagation of the elastic wave, resulting in the emergence of a scattered elastic wave field, denoted as $\mathbf{u}^{sc}$. The scattering problem is modeled in \eqref{eq:transea1}. Since in this paper, we mainly focus on the inverse problem \eqref{eq:so1} corresponding to \eqref{eq:transea1}, we assume that there exists a unique solution $v\in H^1(\Omega)$ and $\mathbf u \in H_{\rm loc}^1(\mathbb R^2)^2$ to \eqref{eq:transea1}. The inverse problem \eqref{eq:so1} concerns the determination of the  scatterer $\Omega$ by knowledge of the far-field pattern $\mathbf{u}_t^{\infty}(\hat{\mathbf{x}};\mathbf{d})$, where $\mathbf{u}_t^{\infty}$ is given in \eqref{eq:far-field}.

Considering the scattering problem \eqref{eq:transea1}, an acoustic medium scatterer is considered invisible if the far-field pattern \eqref{eq:far-field} of the elastic scattered wave, as defined in  \eqref{eq:transea1}, is zero. Otherwise, it is visible from the corresponding far-field measurements. In Theorem \ref{thm:radiating}, we shall reveal that when the underlying acoustic medium scatterer possesses a convex planar corner, it's always visible. Before that, considering an interface problem \eqref{eq:inverse12}  between an acoustic and an elastic wave, applying \cite[Theorem 2.1]{ZM},  we can establish the H\"older continuous regularity of the acoustic wave up to the interface, especially at the corner point.

\begin{lem}\label{lem:regu1}
Let $S_{h} = W \cap B_{h}$ and $\Gamma_{h}^{\pm} = \partial S_{h}\backslash \partial B_{h} $, where $W$ is the infinite sector defined in \eqref{eq:W} with the opening angle $\theta_W\in (0,\pi)$. Suppose that $v\in H^1(B_{h})$ and  $\mathbf{u}$ is real analytic in $B_h$, which satisfy
\begin{equation}\label{eq:inverse12}
	\begin{cases}
		\Delta v +\omega^2 \rho_b\kappa^{-1} v =0\quad & \mbox{in}\ S_{h},\medskip\\
		\mathcal{L}_{\lambda, \mu} \mathbf{u} +\omega^2\rho_e\mathbf{u}={\bf 0}\quad & \mbox{in}\ B_{h},\medskip\\
		\mathbf{u}\cdot\nu-\frac{1}{\rho_b\omega^2} \nabla v\cdot\nu=0\quad & \mbox{on}\ \Gamma_{h}^{\pm},\medskip\\
		T_{\nu}\mathbf{u} + v \nu={\bf 0}\quad & \mbox{on}\ \Gamma_{h}^{\pm},\medskip\\
		\end{cases}
	\end{equation}
where $\omega, \rho_b$ and $\kappa$ be positive real-valued constants.  Then there exists $\alpha \in (0,1)$ such that $v \in C^{\alpha}(\overline {S_{h/2}})$.
\end{lem}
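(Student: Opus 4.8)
The plan is to use the analyticity of the elastic displacement to turn the coupling conditions on $\Gamma_h^\pm$ into explicit, real-analytic boundary data for $v$, and then to read off the H\"older regularity of $v$ up to the convex corner from the interface regularity theory recorded in \cite[Theorem 2.1]{ZM}. Since $\mathbf u$ is real analytic in $B_h$, the quantities $T_\nu\mathbf u$ and $\mathbf u\cdot\nu$ are real analytic along each closed side $\overline{\Gamma_h^\pm}$, the vertex $\mathbf 0$ included. Rewriting the last two lines of \eqref{eq:inverse12} accordingly, the Cauchy data of $v$ on the sides are prescribed analytic functions,
\[
 v\big|_{\Gamma_h^\pm}=-\,\nu\cdot T_\nu\mathbf u,\qquad \partial_\nu v\big|_{\Gamma_h^\pm}=\rho_b\omega^2\,(\mathbf u\cdot\nu),
\]
so that $v$ is an $H^1$ solution of $\Delta v+\omega^2\rho_b\kappa^{-1}v=0$ in the strictly convex sector $S_h$ with analytic interface data (either condition can be used, or both, since $v$ is assumed to exist and they are therefore mutually consistent).

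Away from the tip the conclusion is standard: on the flat pieces $\Gamma_{h/2}^\pm\setminus\{\mathbf 0\}$ the boundary is smooth and the data analytic, so interior elliptic regularity together with boundary Schauder estimates make $v$ smooth (indeed real analytic) on $\overline{S_{h/2}}\setminus\{\mathbf 0\}$; and near the free arc $\Lambda_{h/2}$ one is in the interior of $S_h$, where $v$ is smooth as well. The whole difficulty is therefore concentrated at the corner $\mathbf 0$, and this is exactly where \cite[Theorem 2.1]{ZM} enters: it furnishes H\"older continuity up to a convex corner for precisely this acoustic-elastic interface configuration. To apply it I would verify its hypotheses --- the analyticity of $\mathbf u$ up to the interface, the constancy of the Lam\'e and acoustic parameters $\lambda,\mu,\rho_e,\rho_b,\kappa$, and the strict convexity $\theta_W\in(0,\pi)$ --- and then conclude $v\in C^\alpha(\overline{S_{h/2}})$ for some $\alpha\in(0,1)$, combining the corner estimate with the boundary regularity away from $\mathbf 0$.

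I expect the corner to be the genuine obstacle. A priori the Dirichlet value $-\nu\cdot T_\nu\mathbf u$ produced from the two sides need not agree at $\mathbf 0$ (it involves $\nu^\top(\nabla^s\mathbf u)(\mathbf 0)\nu$, which depends on the side through $\nu$), so continuity at the tip is not automatic and a jump-type contribution must be excluded. It is the strict convexity of the sector --- through the location of the admissible corner exponents in the associated Mellin/Kondrat'ev analysis carried out in \cite{ZM} --- that keeps those exponents strictly positive, rules out the bounded-but-discontinuous angular mode, and yields the H\"older modulus $r^\alpha$ with $\alpha\in(0,1)$. Verifying that the configuration of \eqref{eq:inverse12} meets the assumptions of \cite[Theorem 2.1]{ZM} is thus the crux, after which the bound $v\in C^\alpha(\overline{S_{h/2}})$ follows.
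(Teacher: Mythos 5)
Your overall strategy coincides with the paper's: use the analyticity of $\mathbf u$ to turn the transmission conditions into prescribed boundary data for the scalar function $v$, and then invoke \cite[Theorem 2.1]{ZM} to control $v$ at the convex corner. However, there is a genuine gap at exactly the step you identify as the crux. You have misread what \cite[Theorem 2.1]{ZM} provides: it is \emph{not} an ``interface regularity'' result for acoustic--elastic configurations, and it does not directly furnish H\"older continuity. It is a Sobolev-regularity theorem (of $H^2$ type) for a \emph{scalar} elliptic equation with mixed Dirichlet--Robin boundary conditions on a \emph{polygonal} domain, under angle restrictions. Its hypotheses say nothing about $\mathbf u$, the Lam\'e parameters, or their constancy, so the verification you propose (``analyticity of $\mathbf u$ up to the interface, constancy of $\lambda,\mu,\rho_e,\rho_b,\kappa$'') is not a verification of that theorem's hypotheses, and the Mellin/Kondrat'ev analysis of the coupled system that you attribute to \cite{ZM} is not in that reference. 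As written, your corner step has nothing concrete to stand on.

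What is missing is the formulation of a well-posed scalar boundary value problem on a bounded \emph{polygonal} domain to which \cite[Theorem 2.1]{ZM} actually applies; your truncated sector, closed off by the circular arc, is not such a domain, and ``either condition can be used, or both'' leaves the problem unspecified (using both is overdetermined and fits no BVP regularity theorem). The paper resolves this by a chord construction: let $\mathbf y_{1,2}$ be the endpoints of $\Gamma_{h/2}^{\mp}$ and $\Lambda$ the straight segment joining them, so that $\Gamma_{h/2}^-,\Gamma_{h/2}^+,\Lambda$ bound a triangle $D$. Interior elliptic regularity makes $v$ analytic on $\Lambda$, giving Dirichlet data in $H^{3/2}(\Lambda)$; the \emph{first} transmission condition gives Neumann data $\partial_\nu v=\rho_b\omega^2\,\mathbf u\cdot\nu\in H^{1/2}(\Gamma_{h/2}^\pm)$ on the sides; and the Helmholtz equation is read as $\Delta v=f$ with $f\in L^2(D)$. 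Since the apex angle lies in $(0,\pi)$, \cite[Theorem 2.1]{ZM} yields $v\in H^2(S_{h/2})$, and the claimed $C^\alpha(\overline{S_{h/2}})$ regularity then follows from the two-dimensional Sobolev embedding $H^2\hookrightarrow C^\alpha$ --- a final step absent from your proposal. Incidentally, the corner-compatibility worry in your last paragraph is resolved more simply than by convexity: if the two analytic Dirichlet traces $-\nu\cdot T_\nu\mathbf u$ disagreed at $\mathbf 0$, the trace of $v$ would have a jump there and hence fail to be in $H^{1/2}$ near the vertex, contradicting $v\in H^1(S_h)$; the role of convexity ($\theta_W<\pi$, and base angles $<\pi/2$ at the Dirichlet--Neumann corners of $D$) is to push the genuine corner exponents above $1$ so that the $H^2$ conclusion of \cite{ZM} holds.
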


\begin{proof}
	According to the geometric setup of $W$, we  know that $\mathbf y_1=h/2(\cos \theta_m, \sin \theta_m )$ and $\mathbf y_2=h/2(\cos \theta_M, \sin \theta_M )$ are two endpoints of line segments $\Gamma_{h/2}^-$ and $\Gamma_{h/2}^+$, respectively, where $\Gamma_{h/2}^\pm=B_{h/2}\cap W$. Denote $\Lambda=\mathbf y_2-\mathbf y_1$. Let the triangle $D$ be formed by three line segments $\Gamma_h^-$, $\Gamma_h^+$ and $\Lambda$. It is easy to see that $D\Subset S_h$. By interior regularity of elliptic PDE, we know that $v$ is analytic on $\Lambda$. Due to \eqref{eq:inverse12}, it yields that
	\begin{equation*}
		\begin{cases}
			\Delta v = f  \quad & \mbox{in } D, \medskip\\
			v=h \quad & \mbox{on } \Lambda , \medskip\\
				\frac{\partial v}{\partial \nu}=g_+ \quad & \mbox{on } \Gamma^+_{h/2},  \medskip\\
				\frac{\partial v}{\partial \nu}=g_- \quad & \mbox{on } \Gamma^-_{h/2},
		\end{cases}
	\end{equation*}
	where $f=- \omega^2 \rho_b \kappa^{-1} v\in L^2 (D)$, $g_+= \rho_b \omega^2 (\mathbf u\cdot \nu)|_{\Gamma_{h/2}^+} \in H^{1/2} (\Gamma_{h/2}^+ )  $, $g_-= \rho_b \omega^2 (\mathbf u\cdot \nu)|_{\Gamma_{h/2}^-} \in H^{1/2} (\Gamma_{h/2}^-)  $ and $h=v|_{\Lambda} \in H^{3/2} (\Lambda)$. By noting $\theta_W\in (0,\pi)$, using \cite[Theorem 2.1]{ZM}, we know that $v \in H^2(S_{h/2})$. By Sobolev embedding, we prove this lemma. 
\end{proof}

\begin{thm}\label{thm:radiating}
	Consider the scattering problem \eqref{eq:transea1}. Let $\Omega$ be the acoustic medium scatterer with configurations $\rho_b$ and $\kappa$ associated with \eqref{eq:transea1}. The homogenous isotropic elastic medium background  $\mathbb R^2$ is characterized by medium parameters $\lambda$, $\mu$, and $\rho_e$. Suppose that the elastic incident wave associated with \eqref{eq:transea1} given by the elastic compressional incident wave $\mathbf{u}^i_{\rm p}$, where 
	\begin{equation}\label{eq:incident}
		\mathbf{u}^i_{\rm p}=\mathbf{d} e^{\bsi k_p \mathbf{x}\cdot \mathbf{d}} .
	\end{equation}
	If $\Omega$ has a convex planar corner, then $\Omega$ always scatters for any elastic incident wave $\mathbf{u}^i_{\rm p}$. 
	
	\end{thm}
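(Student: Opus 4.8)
The plan is to argue by contradiction. Suppose that $\Omega$ does not scatter, i.e. the far-field pattern $\mathbf{u}_t^{\infty}(\cdot\,;\mathbf{d})$ generated by the compressional incident wave $\mathbf{u}^i_{\rm p}$ in \eqref{eq:incident} vanishes identically. By the one-to-one correspondence between $\mathbf{u}^s$ and its far-field pattern (Rellich's lemma combined with unique continuation for the Lam\'e system), this forces $\mathbf{u}^s\equiv\mathbf{0}$ in $\mathbb{R}^2\setminus\overline\Omega$, so that $\mathbf{u}=\mathbf{u}^i_{\rm p}$ throughout the elastic exterior. As explained in Subsection \ref{sec:MB}, the transmission conditions in \eqref{eq:transea1} then couple $v|_\Omega$ with $\mathbf{u}^i_{\rm p}$; since $\mathbf{u}^i_{\rm p}$ solves the Lam\'e equation \eqref{eq:ei1} on all of $\mathbb{R}^2$, the pair $(v|_\Omega,\mathbf{u}^i_{\rm p}|_\Omega)$ is a pair of AE transmission eigenfunctions to \eqref{eq:trans2}. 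After a rigid motion placing the convex corner vertex at $\mathbf{0}$, this pair satisfies the local system \eqref{eq:trans3} on a corner neighbourhood $S_h$.

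Next I would verify the regularity hypotheses \eqref{eq:thm21 assump} required to apply Theorem \ref{thm:thm21}. Because the coefficients of \eqref{eq:trans3} are real, the real and imaginary parts of $(v,\mathbf{u}^i_{\rm p})$ can be treated separately, each again solving \eqref{eq:trans3}. The plane wave $\mathbf{u}^i_{\rm p}$ is real-analytic on $\mathbb{R}^2$, hence $\mathbf{u}^i_{\rm p}\in C^{1,\alpha_1}(\overline{S_{h/2}})^2$; and since $\mathbf{u}^i_{\rm p}$ is real-analytic across the interface in $B_h$, Lemma \ref{lem:regu1} applies and gives $v\in C^{\alpha_2}(\overline{S_{h/2}})$ for some $\alpha_2\in(0,1)$. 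With the hypotheses of Theorem \ref{thm:thm21}(a) in force on $S_{h/2}$, the strain of the (imaginary part of the) elastic field at the vertex must be a scalar matrix,
\begin{equation*}
\nabla^s\big(\Im\mathbf{u}^i_{\rm p}\big)(\mathbf{0})=\partial_1\big(\Im u_1\big)(\mathbf{0})\,I .
\end{equation*}

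Finally I would derive the contradiction by a direct computation. Writing $\mathbf{d}=(d_1,d_2)^\top$ in the rotated frame (still a unit vector) and differentiating \eqref{eq:incident}, one finds $\partial_m\big(\Im u_j\big)(\mathbf{0})=k_p\,d_j d_m$, so that
\begin{equation*}
\nabla^s\big(\Im\mathbf{u}^i_{\rm p}\big)(\mathbf{0})=k_p\,\mathbf{d}\,\mathbf{d}^\top ,
\end{equation*}
a rank-one symmetric matrix with eigenvalues $k_p$ and $0$. A scalar matrix $cI$ has both eigenvalues equal to $c$, so the identity above would force $k_p|\mathbf{d}|^2=0$, contradicting $k_p>0$ and $|\mathbf{d}|=1$. (Taking instead the real part yields $\nabla^s(\Re\mathbf{u}^i_{\rm p})(\mathbf{0})=0$, consistent with a scalar matrix, which is why the imaginary part carries the contradiction.) This shows $\mathbf{u}_t^\infty\not\equiv\mathbf{0}$, i.e. $\Omega$ scatters for every such $\mathbf{u}^i_{\rm p}$. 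The main obstacle is the second step: guaranteeing that the coupled acoustic--elastic problem delivers the H\"older regularity of $v$ up to the corner demanded by Theorem \ref{thm:thm21}. This is precisely the content of Lemma \ref{lem:regu1}, whose applicability hinges on the analyticity of $\mathbf{u}^i_{\rm p}$ across the interface, a feature that the plane-wave incident field supplies for free.
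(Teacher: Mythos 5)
Your proposal is correct and follows essentially the same route as the paper's proof: contradiction via Rellich's theorem and unique continuation, reduction to the local transmission eigenvalue system \eqref{eq:trans invisible} at the corner, H\"older regularity of $v$ from Lemma \ref{lem:regu1}, the scalar-strain conclusion of Theorem \ref{thm:thm21}(a), and a direct plane-wave computation at the vertex contradicting $|\mathbf{d}|=1$. Your explicit passage to the imaginary part (needed because Theorem \ref{thm:thm21} is stated for real-valued eigenfunctions) and your rank/eigenvalue argument in place of the paper's component equations $d_1^2=d_2^2$, $d_1d_2=0$ are only minor refinements of the same argument.
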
 
	
	\begin{proof}
		We prove this theorem by contradiction. Suppose that  $\Omega$ has a convex planar corner point $\mathbf x_c \in \partial \Omega$  and does not scatter. Then $\Omega$ is invisible. Suppose that the considering corner is formed by two line segments  $\mathbf y_+-\mathbf x_c$	 and $\mathbf y_--\mathbf x_c$ with $|\mathbf y_+-\mathbf x_c|=|\mathbf y_--\mathbf x_c|$, where $\mathbf y_+ \in  \partial\Omega$ and $\mathbf y_-\in \partial \Omega$. 		
		Since the partial differential operators $\Delta$ and $\mathcal L_{\lambda,\mu}$ are invariant under rigid motions, without loss of generality, we assume that $\mathbf x_c=\mathbf 0$ and the corner is described by \eqref{eq:Sh}. For simplicity,  we assume that the underlying rigid motion is a rotation, which is characterized by an orthogonal matrix $Q$. Let $(v,\mathbf u)\in H^1(\Omega) \times H_{\rm loc}^1(\mathbb R^2)^2$ be the solution to \eqref{eq:transea1} associated with $\Omega$ and $\mathbf{u}^i_{\rm p}$.  By the Rellich Theorem (cf. \cite{Hahner98}) and unique continuation principle, one has
		\begin{equation} \label{eq:trans invisible}
\begin{cases}
\mathcal{L}_{\lambda, \mu} \mathbf{u}^i_{\rm p}+\omega^2\rho_e\mathbf{u}^i_{\rm p}={\bf 0}\quad & \mbox{in}\ S_h,\medskip\\
\Delta v+\omega^2 \rho_b\kappa^{-1} v=0\quad & \mbox{in}\ S_h,\medskip\\
\mathbf{u}^i_{\rm p}\cdot\nu-\frac{1}{\rho_b\omega^2} \nabla v\cdot\nu=0\quad & \mbox{on}\ \Gamma_h^\pm,\medskip\\
T_{\nu}\mathbf{u}^i_{\rm p}+v\nu={\bf 0}\quad & \mbox{on}\ \Gamma_h^\pm.
\end{cases}
\end{equation}

Using Lemma \ref{lem:regu1} and Theorem \ref{thm:thm21}, we know that 
\begin{align}\label{eq:thm41 eq}
	\nabla^s \mathbf u^i_{\rm p} (\mathbf 0)=\partial_1 u_{\rm p,1}^i (\mathbf 0)	I,
\end{align}
where $\mathbf u^i_{\rm p} =(\mathbf u^i_{\rm p,1} ,\mathbf u^i_{\rm p,2} )$. 


Let $\widetilde {\mathbf d}=Q\mathbf d =(d_1,d_2)$. Using \eqref{eq:incident} and  \eqref{eq:thm41 eq}, by directly calculations, it yields that $d_1^2=d_2^2$ and $d_1d_2=0$. This contradicts to the fact that $ \widetilde {\mathbf d} \in \mathbb S^1$. 
	\end{proof}
	
	\begin{rem}
		Theorem \ref{thm:radiating} generically holds for any shear incident wave $\mathbf{u}^i_{\rm s}=\mathbf{d}^\perp e^{\bsi k_s \mathbf{x}\cdot \mathbf{d}}$. Namely, when the underlying scatterer $\Omega$ has a convex corner, it radiates a nonzero far field pattern if one uses any shear incident wave  $\mathbf{u}^i_{\rm s}$. Using a similar argument, by contradiction, we can get a similar transmission eigenvalue problem \eqref{eq:trans invisible}. By Theorem \ref{thm:thm21}, from \eqref{eq:thm21 v0} we can show that 
		\begin{align}\label{eq:generic}
				v(\mathbf x_c)+2(\lambda+\mu ) \partial_1 u^i_{\rm s, 1}(\mathbf x_c )=0,
		\end{align}
		where $\mathbf x_c$ is the corner point and $\mathbf u^i_{\rm s} =(\mathbf u^i_{\rm s,1} ,\mathbf u^i_{\rm s,2} )$. Here $v$ is the acoustic total wave field to \eqref{eq:transea1} associated with $\Omega$ and $\mathbf u^i_{\rm s}$. The equation \eqref{eq:generic} generically does not hold from a physical point of view. Hence a scatterer possessing a convex corner that always scatters for any shear incident wave. 
	\end{rem}

 In Theorem \ref{thm:radiating}, we have shown that if the scatterer has a convex corner, then it is visible since the corresponding far-field pattern cannot be zero. In Theorem \ref{thm:inverse1}, we shall reveal that when visibility holds for the scatterer, the shape unique determination by at most three far-field measurements can be achieved. We first consider the local uniqueness result. When the scatterer is a convex polygon, we next prove a global uniqueness.  First, we introduce the admissible class of the underlying acoustic medium scatterer in our study. 
\begin{defn}\label{def:admissible}
	Let $\Omega \Subset \mathbb{R}^2$ be a bounded, simply connected Lipschitz domain in $\mathbb{R}^2$. Considering the scattering problem \eqref{eq:transea1} associated with $\Omega$, let the corresponding  elastic total wave field  be $\mathbf u=(u_1,u_2)$. Then the scatterer  $\Omega$ is said to be admissible if  it satisfies either of the following two conditions:
	\begin{enumerate}
	
	\item[(a)]  The strain $\nabla^s \mathbf u (\mathbf x)$ is not a scalar matrix in the sense that 
for any $\mathbf{x}\in \mathbb{R}^2\setminus \overline \Omega $,
		\begin{equation}\label{eq:admissible strain}
		\nabla^s \mathbf u (\mathbf{x})\neq \partial_1 u_1(\mathbf x) I.
		\end{equation}
	
		\item[(b)] The gradient of the total elastic wave field $\mathbf u$ is non-vanishing everywhere in the sense that for any $\mathbf{x}\in \mathbb{R}^2\setminus \overline \Omega$,
		\begin{equation}\label{eq:admissible}
		\nabla \mathbf u (\mathbf{x})\neq \mathbf 0.
		\end{equation}
\end{enumerate}	
\end{defn}

\begin{rem}
The admissible assumptions \eqref{eq:admissible strain} and \eqref{eq:admissible} can be satisfied in generic physical scenarios. Specifically, \eqref{eq:admissible strain} is generally valid based on physical intuition. If \eqref{eq:admissible strain} is violated, then the strain  $\nabla^s \mathbf u (\mathbf x)$ would reduce to a scalar matrix, depending solely on $\partial_1 u_1(\mathbf x)$, which does not typically occur in most physical situations.

Moreover, the fulfillment of assumption \eqref{eq:admissible}  can be illustrated through a specific example. Consider a scenario where the angular frequency $\omega \in \mathbb{R}_+$  is sufficiently small. Physically, this corresponds to a case where the size of the scatterer, represented by diam($\Omega$), is small compared to the wavelength of the incident wave. In such a scenario, the scatterer's disturbance to the incident wave should be negligible, implying that the scattered wave  $\mathbf{u}^{s}$  is small in comparison to the incident wave $\mathbf{u}^{i}$. Thus, if 
$\mathbf{u}^{i}$  is non-vanishing everywhere (for example, when 
$\mathbf{u}^{i}$ is a elastic compressional incident wave), then the total wave  $\mathbf{u}=\mathbf{u}^{i} + \mathbf{u}^{s}$
  should also be non-vanishing throughout the domain.  We shall not delve into this aspect in detail.
  

	\end{rem}


\begin{thm}\label{thm:inverse1}
	Consider the scattering problem \eqref{eq:transea1} associated with two admissible scatterers $\Omega$ and $\widetilde \Omega$. Let $\omega \in \mathbb{R}_+$ be fixed and $\mathbf{d}_\ell$, $\ell=1,2,3$,  be three distinct incident directions from $\mathbb{S}^1$. Let $\mathbf{u}_t^\infty$ and $\widetilde{\mathbf{u}}_t^\infty$ be, respectively, the far-field patterns associated with $\Omega$ and $\widetilde \Omega$.  
	If 
	\begin{equation}\label{eq:farfield}
		\mathbf{u}_t^\infty (\hat{\mathbf{x}}; \mathbf{d}_\ell) = \widetilde{\mathbf{u}}_t^\infty (\hat{\mathbf{x}}; \mathbf{d}_\ell),\quad \hat{\mathbf{x}}\in \mathbb{S}^1, \ell=1,2,3,
	\end{equation} 
then  $\Omega \Delta \widetilde \Omega:=\left(\Omega \backslash \widetilde \Omega  \right) \cup \left(\widetilde \Omega \backslash \Omega \right)$ cannot contain a   convex  corner. Furthermore, if \eqref{eq:farfield} is satisfied, $\Omega$ and $\widetilde \Omega$ are two admissible convex polygons, then one has that 
\begin{equation*}
	\Omega = \widetilde \Omega.
\end{equation*}
\end{thm}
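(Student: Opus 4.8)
The plan is to argue by contradiction and to convert the hypothesis of coincident far-field data into a local transmission-eigenfunction configuration at a corner, to which Theorem~\ref{thm:thm21} applies. I would first prove the local assertion (that $\Omega\Delta\widetilde\Omega$ contains no convex corner), from which the global statement follows by a purely geometric reduction. So suppose $\Omega\neq\widetilde\Omega$ and that $\Omega\Delta\widetilde\Omega$ contains a convex corner with vertex $\mathbf{x}_c$. Fix an incident direction $\mathbf{d}_\ell$ and let $(v,\mathbf{u})$ and $(\widetilde v,\widetilde{\mathbf{u}})$ denote the solutions of \eqref{eq:transea1} associated with $\Omega$ and $\widetilde\Omega$. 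Since the two far-field patterns agree for $\mathbf{d}_\ell$, the Rellich theorem (cf.\ \cite{Hahner98}) together with the unique continuation principle yields $\mathbf{u}^{s}=\widetilde{\mathbf{u}}^{s}$ in the unbounded component of $\mathbb{R}^2\setminus(\overline\Omega\cup\overline{\widetilde\Omega})$, and hence $\mathbf{u}=\widetilde{\mathbf{u}}$ throughout $\mathbb{R}^2\setminus(\overline\Omega\cup\overline{\widetilde\Omega})$ because the incident field is common.

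Next I would localize at $\mathbf{x}_c$. After a rigid motion I may assume $\mathbf{x}_c=\mathbf{0}$ and that the corner is described by \eqref{eq:Sh}; without loss of generality the vertex is a corner of $\Omega$ lying in the exterior of $\widetilde\Omega$, so there is a small ball $B_h$ with $B_h\cap\overline{\widetilde\Omega}=\emptyset$ and $\Omega\cap B_h=S_h$. On $B_h$ the total field $\widetilde{\mathbf{u}}=\mathbf{u}^i+\widetilde{\mathbf{u}}^{s}$ solves the homogeneous Lam\'e system (there being no acoustic inclusion there), so it is real analytic in $B_h$. Since $\widetilde{\mathbf{u}}=\mathbf{u}$ in $B_h\setminus\overline\Omega$, the pair $(v,\widetilde{\mathbf{u}})$ satisfies exactly the corner transmission system \eqref{eq:inverse12} (equivalently \eqref{eq:trans3}) on $S_h$, with $\widetilde{\mathbf{u}}$ analytic up to and across $\Gamma_h^\pm$. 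Lemma~\ref{lem:regu1} then gives $v\in C^{\alpha}(\overline{S_{h/2}})$ for some $\alpha\in(0,1)$, while analyticity gives $\widetilde{\mathbf{u}}\in C^{1,\alpha_1}(\overline{S_{h/2}})^2$, so the regularity hypotheses \eqref{eq:thm21 assump} of Theorem~\ref{thm:thm21} hold for $(v,\widetilde{\mathbf{u}})$ at $\mathbf{0}=\mathbf{x}_c$.

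I would then extract the contradiction from admissibility. Theorem~\ref{thm:thm21}(a) forces $\nabla^s\widetilde{\mathbf{u}}(\mathbf{x}_c)=\partial_1\widetilde u_1(\mathbf{x}_c)I$, i.e.\ the strain of $\widetilde{\mathbf{u}}$ at the exterior point $\mathbf{x}_c\in\mathbb{R}^2\setminus\overline{\widetilde\Omega}$ is a scalar matrix. If $\widetilde\Omega$ is admissible through condition \eqref{eq:admissible strain}, this is an immediate contradiction. If instead it is admissible through \eqref{eq:admissible}, I would combine parts (b)–(d) of Theorem~\ref{thm:thm21} with the corner relations produced by the three distinct incident directions $\mathbf{d}_1,\mathbf{d}_2,\mathbf{d}_3$: the two scalar-strain identities of part (a) together with \eqref{eq:thm21 v0} must hold simultaneously for the three total fields, and, as in the computation following \eqref{eq:thm41 eq} in Theorem~\ref{thm:radiating}, the explicit plane-wave form \eqref{eq:planekpks} of the incident data renders these relations overdetermined unless $\partial_1\widetilde u_1(\mathbf{x}_c)=\partial_2\widetilde u_1(\mathbf{x}_c)=0$; part (d) then yields $\nabla\widetilde{\mathbf{u}}(\mathbf{x}_c)=\mathbf{0}$, contradicting \eqref{eq:admissible}. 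In either case the assumed convex corner cannot exist, which proves the first assertion.

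Finally, for the global statement I would use the geometry of convex polygons. If $\Omega$ and $\widetilde\Omega$ are distinct admissible convex polygons, then by convexity at least one of them, say $\Omega$, has a vertex $\mathbf{x}_c$ lying outside $\overline{\widetilde\Omega}$; otherwise every vertex of each polygon would lie in the closure of the other, and taking convex hulls of the vertices would force $\Omega\subseteq\overline{\widetilde\Omega}$ and $\widetilde\Omega\subseteq\overline\Omega$, i.e.\ $\Omega=\widetilde\Omega$. Such a vertex is a convex corner of $\Omega\Delta\widetilde\Omega$, which is excluded by the first assertion, so $\Omega=\widetilde\Omega$. I expect the main obstacle to be the middle step: verifying rigorously that the analytic continuation of $\widetilde{\mathbf{u}}$ makes $(v,\widetilde{\mathbf{u}})$ a bona fide transmission-eigenfunction pair on $S_h$ with the regularity demanded by Theorem~\ref{thm:thm21}, and, under condition \eqref{eq:admissible}, pinning down precisely how the three incident directions force the full vanishing $\nabla\widetilde{\mathbf{u}}(\mathbf{x}_c)=\mathbf{0}$ rather than only the scalar-strain identity of part (a).
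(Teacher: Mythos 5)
Your setup — Rellich's theorem to identify the total fields in the unbounded exterior component, localization at a convex corner of $\Omega\Delta\widetilde\Omega$, analyticity of the elastic field near that corner, Lemma~\ref{lem:regu1} for the H\"older regularity of the acoustic field, and the convex-hull argument for the polygonal global step — matches the paper, and your treatment of admissibility through \eqref{eq:admissible strain} is correct: applying Theorem~\ref{thm:thm21}(a) to a single pair already produces a scalar strain at the exterior corner point and hence a contradiction (this is even more direct than the paper's own Case~2). The genuine gap is the case where the scatterers are admissible only through \eqref{eq:admissible}. Your claim that the corner identities for the three total fields are ``overdetermined unless $\partial_1\widetilde u_1(\mathbf{x}_c)=\partial_2\widetilde u_1(\mathbf{x}_c)=0$'' is unsupported and, as stated, false: for each $\ell$, Theorem~\ref{thm:thm21} only yields $\partial_1\widetilde u_1^{(\ell)}(\mathbf{x}_c)=\partial_2\widetilde u_2^{(\ell)}(\mathbf{x}_c)$, $\partial_1\widetilde u_2^{(\ell)}(\mathbf{x}_c)+\partial_2\widetilde u_1^{(\ell)}(\mathbf{x}_c)=0$ and $v^{(\ell)}(\mathbf{x}_c)=-2(\lambda+\mu)\partial_1\widetilde u_1^{(\ell)}(\mathbf{x}_c)$; these are separate relations for three different (unknown) solution pairs and are perfectly consistent with all of the derivatives being nonzero. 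The appeal to the computation following \eqref{eq:thm41 eq} also fails here: in Theorem~\ref{thm:radiating} the scattered field vanishes identically, so the total field is the explicitly known plane wave, whereas in the present situation $\widetilde{\mathbf{u}}^{(\ell)}=\mathbf{u}^i(\cdot;\mathbf{d}_\ell)+\widetilde{\mathbf{u}}^{s,(\ell)}$ contains an unknown scattered part, and the plane-wave form of $\mathbf{u}^i$ gives no pointwise control of $\nabla\widetilde{\mathbf{u}}^{(\ell)}(\mathbf{x}_c)$.

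What the paper does instead — and what your proof is missing — is a superposition argument, which is precisely where the three measurements enter. Since the three vectors $\bigl(\partial_1\widetilde u_1^{(\ell)}(\mathbf{x}_c),\,\partial_2\widetilde u_1^{(\ell)}(\mathbf{x}_c)\bigr)^\top\in\mathbb{C}^2$, $\ell=1,2,3$, must be linearly dependent, one can choose $a_1,a_2,a_3$, not all zero, such that the superposed fields $\mathbf{w}=\sum_{\ell}a_\ell\widetilde{\mathbf{u}}^{(\ell)}$ and $p=\sum_{\ell}a_\ell v^{(\ell)}$ — still a transmission pair on $S_h$ by linearity, and still a bona fide total field, namely the one generated by the incident wave $\sum_\ell a_\ell\mathbf{u}^i(\cdot;\mathbf{d}_\ell)$ — satisfy $\partial_1 w_1(\mathbf{x}_c)=\partial_2 w_1(\mathbf{x}_c)=0$. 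Then parts (a), (c), (d) of Theorem~\ref{thm:thm21} give $\nabla\mathbf{w}(\mathbf{x}_c)=\mathbf{0}$, contradicting \eqref{eq:admissible}; and the degenerate possibility $\mathbf{w}\equiv\mathbf{0}$ in $S_h$ must be excluded by Lemma~\ref{lem:auxiliary} (linear independence of total fields associated with distinct incident directions), a lemma your proposal never invokes. Without this superposition-plus-independence step, the case \eqref{eq:admissible} does not close, and your argument never explains why three far-field measurements are required at all.
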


Before presenting the proof of Theorem \ref{thm:inverse1}, we first derive an auxiliary Lemma as follows. The auxiliary Lemma \ref{lem:auxiliary} demonstrates that the elastic total wave fields associated with incident waves given in \eqref{eq:planekpks}, where the incident directions are different, are linearly independent in any setting. The lemma can be proved by following a similar argument to the proof of Theorem 5.1 in \cite{CK}, and we skip the detailed proof.

\begin{lem}\label{lem:auxiliary}
	Let $\mathbf{d}_\ell \in \mathbb{S}^1$, $\ell=1,\ldots,n$, be $n$ vectors which are distinct from each other. Suppose that $\Omega$ is a bounded Lipschitz domain and $\mathbb{R}^2 \backslash \overline{\Omega}$ is connected. Let the incident elastic wave field $\mathbf{u}^i(\mathbf{x};k_p,k_s,\mathbf{d}_\ell)$ be defined in \eqref{eq:planekpks}. Furthermore, assume that the total elastic wave field $\mathbf{u}(\mathbf{x};k_p,k_s,\mathbf{d}_\ell)$ associated with $\mathbf{u}^i(\mathbf{x};k_p,k_s,\mathbf{d}_\ell)$ satisfies \eqref{eq:transea1}. Then the following set of functions is linearly independent:
	\[
	\{\mathbf{u}(\mathbf{x};k_p,k_s,\mathbf{d}_\ell);\mathbf{x}\in D,\ell=1,2,\ldots,n\},
	\]
	where $D\Subset \mathbb{R}^2 \backslash \overline \Omega$ is an open set.
\end{lem}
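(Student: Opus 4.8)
The plan is to argue by contradiction in the spirit of \cite[Theorem 5.1]{CK}, reducing the statement about the \emph{total} fields to a statement about a superposition of incident plane waves. Suppose the fields are linearly dependent on the open set $D\Subset\mathbb{R}^2\backslash\overline\Omega$, so that there are constants $c_\ell$, not all zero, with
\begin{equation*}
\mathbf{w}:=\sum_{\ell=1}^n c_\ell\,\mathbf{u}(\mathbf{x};k_p,k_s,\mathbf{d}_\ell)=\mathbf{0}\quad\mbox{in }D.
\end{equation*}
Each total field solves the Navier equation $\mathcal{L}_{\lambda,\mu}\mathbf{u}+\omega^2\rho_e\mathbf{u}=\mathbf{0}$ in the connected exterior $\mathbb{R}^2\backslash\overline\Omega$, hence so does $\mathbf{w}$. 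The first step is to invoke the unique continuation principle for the Lam\'e system (valid under the ellipticity condition \eqref{eq:cond1}), which upgrades $\mathbf{w}=\mathbf{0}$ on $D$ to $\mathbf{w}=\mathbf{0}$ throughout $\mathbb{R}^2\backslash\overline\Omega$.

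Next I would split each total field as $\mathbf{u}=\mathbf{u}^i+\mathbf{u}^s$ and rearrange the identity $\mathbf{w}=\mathbf{0}$ into
\begin{equation*}
\mathbf{w}^i:=\sum_{\ell=1}^n c_\ell\,\mathbf{u}^i(\cdot;k_p,k_s,\mathbf{d}_\ell)=-\sum_{\ell=1}^n c_\ell\,\mathbf{u}^s(\cdot;k_p,k_s,\mathbf{d}_\ell)\quad\mbox{in }\mathbb{R}^2\backslash\overline\Omega.
\end{equation*}
The left-hand side is a finite combination of entire plane-wave solutions, hence an entire solution of the Navier equation on all of $\mathbb{R}^2$; the right-hand side is a finite combination of scattered fields, each satisfying the Kupradze radiation condition \eqref{eq:kp1}. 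Thus $\mathbf{w}^i$ is simultaneously an entire solution and a radiating one. Performing the Helmholtz decomposition $\mathbf{w}^i=\mathbf{w}^i_p+\mathbf{w}^i_s$ into its curl-free pressure part and divergence-free shear part, each scalar potential solves a Helmholtz equation with real wave number $k_p$ or $k_s$ and inherits both entirety and the radiation condition. An entire solution satisfying the radiation condition vanishes identically (a standard consequence of Rellich's lemma, cf. \cite{Hahner98}), so $\mathbf{w}^i_p\equiv\mathbf{0}$, $\mathbf{w}^i_s\equiv\mathbf{0}$, whence $\mathbf{w}^i\equiv\mathbf{0}$ in $\mathbb{R}^2$.

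It then remains to show that the vanishing of $\mathbf{w}^i$ forces $c_\ell=0$, which I would obtain by separating the compressional and shear contributions through the differential operators themselves, so that the precise relation between $k_p$ and $k_s$ is irrelevant. Since $\nabla\cdot(\mathbf{d}_\ell^\perp e^{\bsi k_s\mathbf{x}\cdot\mathbf{d}_\ell})=0$ and $\operatorname{curl}(\mathbf{d}_\ell e^{\bsi k_p\mathbf{x}\cdot\mathbf{d}_\ell})=0$, applying $\nabla\cdot$ and $\operatorname{curl}$ to $\mathbf{w}^i=\mathbf{0}$ yields
\begin{equation*}
\bsi k_p\,\alpha_p\sum_{\ell=1}^n c_\ell\,e^{\bsi k_p\mathbf{x}\cdot\mathbf{d}_\ell}=0,\qquad
\bsi k_s\,\alpha_s\sum_{\ell=1}^n c_\ell\,e^{\bsi k_s\mathbf{x}\cdot\mathbf{d}_\ell}=0\quad\mbox{in }\mathbb{R}^2,
\end{equation*}
where I have used $\mathbf{d}_\ell\cdot\mathbf{d}_\ell=1$ and $\mathbf{d}_\ell^\perp\cdot\mathbf{d}_\ell=0$. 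Because $|\alpha_p|+|\alpha_s|\neq0$, at least one prefactor is nonzero; combined with the linear independence of the scalar plane waves $\{e^{\bsi k\mathbf{x}\cdot\mathbf{d}_\ell}\}_{\ell}$ associated with distinct directions (which follows by restricting to a line chosen so that the frequencies $k\,\mathbf{d}_\ell\cdot\mathbf{e}$ are distinct and applying the classical independence of distinct exponentials), this forces $c_\ell=0$ for all $\ell$, the desired contradiction. I expect the main obstacle to be the careful justification of the ``entire $+$ radiating $\Rightarrow$ zero'' step for the \emph{elastic} (vector) field: one must correctly pass through the Helmholtz decomposition so that the scalar Rellich lemma applies componentwise, and confirm that the entirety of $\mathbf{w}^i$ and the radiation behaviour of the scattered combination are both genuinely inherited by the pressure and shear potentials.
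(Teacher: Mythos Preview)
Your proposal is correct and follows precisely the approach the paper indicates: the paper does not give a proof but states that the lemma ``can be proved by following a similar argument to the proof of Theorem~5.1 in \cite{CK}, and we skip the detailed proof.'' Your contradiction argument --- unique continuation to the full exterior, the ``entire $+$ radiating $\Rightarrow$ zero'' step via Helmholtz decomposition and Rellich's lemma, and the linear independence of scalar plane waves with distinct directions --- is exactly the adaptation of \cite[Theorem~5.1]{CK} to the elastic setting, so you have supplied the details the paper omits.
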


\begin{proof}[Proof of Theorem \ref{thm:inverse1}]

Let $\mathbf{G}$ denote the unbounded connected component of $\mathbb{R}^2 \backslash (\Omega \cup \widetilde{\Omega})$. By contraction, if \eqref{eq:farfield} holds, $\Omega \Delta \widetilde \Omega$ contains a convex corner. Since partial differential operators $\Delta$ and $\mathcal L_{\lambda,\mu}$ are invariant under rigid motion, without loss of generality, we assume that there exists a corner $S_h \Subset \mathbf{G}\backslash  \Omega$, where $S_h$ is defined in \eqref{eq:Sh} with $h\in \mathbb R_+$. Therefore, the corner point $\mathbf 0$ of $S_h$ satisfies that $\mathbf 0\in \partial \widetilde \Omega$ and $\Gamma_h^\pm \Subset \partial \widetilde \Omega$. 


	Let $\mathbf{u}(\mathbf{x};k_p,k_s,\mathbf{d}_\ell)$ and $\widetilde{\mathbf{u}}(\mathbf{x};k_p,k_s,\mathbf{d}_\ell)$ respectively denote the elastic total wave fields to \eqref{eq:transea1} associate with $\Omega$ and $\widetilde{\Omega}$ with respect to the incident wave $\mathbf{u}^i(\mathbf{x};k_p,k_s,\mathbf{d}_\ell)$. Similarly, assume that  $v(\mathbf{x};k_p,k_s,\mathbf{d}_\ell)$ and $\widetilde{v}(\mathbf{x};k_p,k_s,\mathbf{d}_\ell)$ respectively denote the acoustic total wave fields to \eqref{eq:transea1} associate with $\Omega$ and $\widetilde{\Omega}$ with respect to the incident wave $\mathbf{u}^i(\mathbf{x};k_p,k_s,\mathbf{d}_\ell)$. By virtue of \eqref{eq:transea1} and the Rellich Theorem (cf. \cite{Hahn1998}), we know that
	\begin{equation}\label{eq:totalw}
		\mathbf{u}(\mathbf{x};k_p,k_s,\mathbf{d}_\ell) = \widetilde{\mathbf{u}}(\mathbf{x};k_p,k_s,\mathbf{d}_\ell),\quad \mathbf{x}\in \mathbf{G},\quad \ell=1,2.
	\end{equation}
By virtue of \eqref{eq:totalw}, using transmission conditions in \eqref{eq:transea1},  one has 
\begin{equation}\label{eq:tb}
\widetilde{\mathbf{u}} \cdot \nu =\frac{1}{\rho_b \omega^2} \nabla \widetilde{v} \nu =\mathbf{u} \cdot \nu, \quad T_{\nu} \widetilde{\mathbf{u}} =- \widetilde{v} \nu = T_{\nu} \mathbf{u} \mbox{ on } \Gamma_h^{\pm}.
\end{equation}
Since $S_h \cap \overline \Omega =\emptyset$, it is easy to see that $\mathbf u$ is analytic in $B_h$. Moreover,  $\mathbf u$  and $\widetilde v$ satisfy
\begin{equation}\label{eq:ts}
\mathcal{L}_{\lambda, \mu} \mathbf{u} +\omega^2\rho_e\mathbf{u} =0 \mbox{ in } B_{h}	, \quad \Delta \widetilde{v} +\omega^2 \rho_b\kappa^{-1} \widetilde{v} =0 \mbox{ in } S_{h}.
\end{equation}
By the well-posedness of  \eqref{eq:transea1}, one has $\widetilde{v} \in H^1(S_h)$. According to Lemma \ref{lem:regu1}, we know that $\widetilde{v} \in C^{\alpha}(\overline{ S_h})$. In what follows we consider two cases

\medskip
\noindent{\bf Case 1}. 
Due to the linear dependence of three $\mathbb{C}^3$-vectors, it is easy to see that there exist three complex constants $a_\ell$ such that
\[
\sum\limits_{\ell=1}^3 a_\ell \begin{bmatrix}
	\partial_1  {u}_1(\mathbf{0};k_p,k_s,\mathbf{d}_\ell)\\ \partial_2 u_1 (\mathbf{0};k_p,k_s,\mathbf{d}_\ell)\end{bmatrix} =\mathbf{0},
\]
where $\mathbf{u}(\mathbf{x};k_p,k_s,\mathbf{d}_\ell)=(u_1(\mathbf{x};k_p,k_s,\mathbf{d}_\ell),u_2(\mathbf{x};k_p,k_s,\mathbf{d}_\ell))^\top$. Moreover, there exists at least one $a_\ell$ is not zero. Let
$$
\mathbf{w}^i(\mathbf{x})=\sum\limits_{\ell=1}^3 a_\ell \mathbf{u}^i(\mathbf{x};k_p,k_s,\mathbf{d}_\ell),
$$
which fulfills \eqref{eq:ei1} and is a combination of three elastic incident waves. 
Since the scattering system \eqref{eq:transea1} is linear,  it is easy to see that
\begin{equation}\label{eq:sum1}
	\mathbf{w}(\mathbf{x}) = \sum\limits_{\ell=1}^3 a_\ell \mathbf{u}(\mathbf{x};k_p,k_s,\mathbf{d}_\ell),\quad p(\mathbf{x}) = \sum\limits_{\ell=1}^3 a_\ell
	\widetilde  v(\mathbf{x};k_p,k_s,\mathbf{d}_\ell)
\end{equation}
are total wave fields associated with $\Omega$ and $\widetilde \Omega$ respectively, where $\mathbf{w}^i$ is the corresponding incident wave. 

Next, we distinguish two separate cases. The first case is that $\mathbf{w}(\mathbf{x})\equiv\mathbf{0}$, $\forall \mathbf{x} \in S_h$. In view of \eqref{eq:sum1}, since $a_\ell$ are not all zero and $\mathbf{d}_\ell$ are distinct, we readily have a contradiction by Lemma \ref{lem:auxiliary}. 

For the second case, it yields that $\mathbf{w}(\mathbf{x}) \not \equiv \mathbf{0}$, $\forall \mathbf{x} \in S_h$. In view of \eqref{eq:tb} and \eqref{eq:ts}, we know that $\mathbf{w}(\mathbf{x}) $ and ${p}(\mathbf{x}) $ fulfill
\begin{equation}\label{eq:inverse12 thm}
	\begin{cases}
		\Delta p +\omega^2 \rho_b\kappa^{-1} p =0\quad & \mbox{in}\ S_{h},\medskip\\
		\mathcal{L}_{\lambda, \mu} \mathbf{w} +\omega^2\rho_e\mathbf{w}={\bf 0}\quad & \mbox{in}\ S_{h},\medskip\\
		\mathbf{w}\cdot\nu-\frac{1}{\rho_b\omega^2} \nabla p\cdot\nu=0\quad & \mbox{on}\ \Gamma_{h}^{\pm},\medskip\\
		T_{\nu}\mathbf{w} + p \nu={\bf 0}\quad & \mbox{on}\ \Gamma_{h}^{\pm}.
		\end{cases}
	\end{equation}
	It is evident that $\partial_1 \mathbf w_1(\mathbf 0)$=$\partial_2 \mathbf w_1(\mathbf 0)=0$. Using \eqref{eq:inverse12 thm} and and applying Theorem \ref{thm:thm21}, we have 
\[
\nabla \mathbf {w}(\mathbf{0})=\mathbf 0,
\]
which contradicts to the admissibility condition (b) in Definition \ref{def:admissible}.

\medskip
\noindent{\bf Case 2}. Similarly, since the strain tensor of $\mathbf u$ is symmetric, there exists three nonzero constants such that 
$$
\sum_{\ell=1}^3a_\ell \begin{bmatrix}
	\partial_1  {u}_1(\mathbf{0};k_p,k_s,\mathbf{d}_\ell)-\partial_2{u}_2(\mathbf{0};k_p,k_s,\mathbf{d}_\ell) \\ \partial_2 u_1 (\mathbf{0};k_p,k_s,\mathbf{d}_\ell)+\partial_1 u_2 (\mathbf{0};k_p,k_s,\mathbf{d}_\ell)\end{bmatrix}=\mathbf 0. 
$$
Following a similar argument  for {\bf Case 1}, applying Theorem \ref{thm:thm21}, we can show that
$$
\nabla^s \mathbf w (\mathbf{0})= \partial_1 w_1(\mathbf 0) I,
$$
which contradicts to the admissible condition (a) in Definition \ref{def:admissible}.

The proof is complete.
\end{proof}

\section*{Acknowledgements}

The work of H. Diao is supported by National Natural Science Foundation of China  (No. 12371422) and the Fundamental Research Funds for the Central Universities, JLU (No. 93Z172023Z01). The work of H. Liu and L. Wang is supported by the NSFC/RGC Joint Research Scheme, N\_CityU101/21; ANR/RGC Joint Research Scheme, A\_CityU203/19; and the Hong Kong RGC General Research Funds (projects 11311122, 12301420 and 11300821). The work of Q. Meng is supported by the Hong Kong RGC Postdoctoral Fellowship Scheme (No.: PDFS2324-1S09).


\begin{thebibliography}{99}
	
\bibitem{Ammari} H. Ammari, B. Fitzpatrick, D. Gontier, H. Lee, and H. Zhang, {\it Minnaert resonances for acoustic waves in bubbly media}, Ann. Inst. Henri Poincar\'e Anal. Non Lin\'eaire, {\bf 35}(2018), 1975-1998.

\bibitem{Ammari1} H. Ammari, B. Fitzpatrick, D. Gontier, H. Lee, and H. Zhang, {\it Sub-wavelength focusing of acoustic waves in bubbly media}, Proc. Roy. Soc. A, {\bf473}(2017), 20170469.

\bibitem{Ammari2} H. Ammari and H. Zhang, {\it Effective medium theory for acoustic waves in bubbly fluids near Minnaert resonant frequency}, SIAM J. Math. Anal., {\bf 49}(2017), 3252-3276.

\bibitem{Barucq}
H. Barucq, R. Djellouli and E. Estecahandy, {\it On the existence and uniqueness of the solution of a fluid-structure interaction scattering problem}, J. Math. Anal. Appl., {\bf 412}(2014), 571-588.

\bibitem{Bsource}
E.~{Bl{\aa}sten}, {\it Nonradiating sources and transmission eigenfunctions vanish at
	corners and edges}, SIAM J. Math. Anal., {\bf 50}(2018), 6255-6270.

\bibitem{BL2017}
 E. Bl{\aa}sten and H. Liu, {\it On vanishing near corners of transmission eigenfunctions}, J. Funct. Anal.,
 {\bf 273}(2017), 3616-3632.

\bibitem{BL2021} E. Bl{\aa}sten and H. Liu, {\it Scattering by curvatures, radiationless sources, transmission eigenfunctions, and inverse scattering problems}, SIAM Journal on Mathematical Analysis, {\bf 53}(2021),  3801-3837.



 
 \bibitem {BLin19}
 E. Bl{\aa}sten and Y. Lin, {\it Radiating and non-radiating sources in elasticity}, 
Inverse Problems, {\bf 35}(2019), No. 1, 1---6.



\bibitem {BLx2021}
E. Bl{\aa}sten, H. Liu and J. Xiao, On an electromagnetic problem in a corner and its applications,
Analysis \& PDE, {\bf 14}(2021), 2207-2224.

\bibitem{BPS2014} 
 E. Bl{\aa}sten, L. P\"{a}iv\"{a}rinta and J. Sylvester, {\it Corners always scatter}, Comm. Math. Phys., {\bf 331}(2014), 725-753.





\bibitem{CCH23}
F. Cakoni, D. Colton, and H. Haddar, {\it Inverse scattering theory and transmission eigenvalues}, CBMS-NSF Regional Conference Series in Applied Mathematics, vol. 88,  SIAM, Philadelphia, PA, 2023.

\bibitem{CH13}
F. Cakoni and H. Haddar, Transmission eigenvalues in inverse scattering theory, Inverse problems and applications: inside out. II, Math. Sci. Res. Inst. Publ., vol. 60, Cambridge Univ. Press, Cambridge, 2013, pp. 529--580.


\bibitem{CV23}
F. Cakoni and M. S. Vogelius, {\it Singularities almost always scatter: regularity results for non-scattering inhomogeneities},  Comm. Pure Appl. Math., {\bf 76} (2023), no. 12, 4022--4047.


\bibitem{CVX23}
F. Cakoni, M. S. Vogelius and J. Xiao,  {\it On the regularity of non-scattering anisotropic inhomogeneities},  Arch. Ration. Mech. Anal., {\bf  247} (2023), no. 3, Paper No. 31, 15 pp.



\bibitem{CX21}
F. Cakoni  and J. Xiao,  {\it On corner scattering for operators of divergence form and applications to inverse scattering}, 
Comm. Partial Differential Equations, {\bf 46} (2021), no. 3, 413--441.




\bibitem{COL}
D. Colton and R. Kress, {\it Looking back on inverse scattering theory}, SIAM Review, {\bf60}(2018), 779-807.
 \bibitem{CK}
D. Colton and R. Kress,  {\it Inverse Acoustic and Electromagnetic Scattering Theory}, 3rd edn. Springer, Berlin, 2013.

 \bibitem{costabel88}
M. Costabel, {\it Boundary integral operators on Lipschitz domains: elementary results}, SIAM J. Math. Anal., {\bf19}(1988), 613-626.


 \bibitem{DLLT}
 
H. Diao, H. Li, H. Liu and J. Tang, {\it Spectral properties of an acoustic-elastic transmission eigenvalue problem with applications}, Journal of Differential Equations, {\bf 371}(2023), 629-659.


\bibitem{DCL}

 H. Diao,  X. Cao and H. Liu, {\it On the geometric structures of conductive transmission eigenfunctions and their application}, Communications in Partial Differential Equations, {\bf 46}(2021), 630-679.
 


\bibitem{DLS}
H. Diao, H. Liu,  and B. Sun,  {\it On a local geometric property of the generalized elastic transmission eigenfunctions and application}, Inverse Problems, {\bf37}(2021), 105015.



\bibitem{EH18}
J. Elschner and G. Hu, {\it Acoustic scattering from corners, edges and circular cones,}  Arch. Ration. Mech. Anal.,  {\bf 228} (2018),  653--690.


\bibitem{Hahn1998}
P. H\"ahner, {\it On acoustic, electromagnetic, and elastic scattering problems in inhomogeneous media}, Habilitation thesis,
Universit\"at G\"ottingen,  1998. 


\bibitem{HSV16}
G. Hu, M. Salo, E. V. Vesalainen, {\it Shape identification in inverse medium scattering problems with a single far-field pattern}, SIAM J. Math. Anal., {\bf 48} (2016), 152--165.



\bibitem{KSS24}
P. Z. Kow, M. Salo and H. Shahgholian,  {\it On scattering behavior of corner domains with anisotropic inhomogeneities}, SIAM J. Math. Anal., {\bf 56} (2024), no.4, 4834--4853.



\bibitem{KR}
A. Kirsch, A. Ruiz, {\it The factorization method for an inverse fluid-solid interaction scattering problem,}  Inverse Probl.
Imaging, {\bf 6}(2012),  681--695.


\bibitem{LiLiuZou} H. Li, H. Liu and J. Zou, {\it Minnaert resonances for bubbles in soft elastic materials}, SIAM J. Appl. Math., {\bf82}(2022), 119-141.




\bibitem{Luke}
C. J. Luke and P. A. Martin, {\it Fluid-solid interaction: Acoustic scattering by a smooth elastic obstacle}, SIAM J. Appl. Math., {\bf55}(1995), 904-922.

 \bibitem{Monk}
P. Monk and V. Selgas, {\it An inverse fluid-solid interaction problem}, Inverse Problem and Imaging, {\bf 3}(2009), 173-198.

\bibitem{McLean}
 W.~McLean, {\it  Strongly Elliptic Systems and Boundary Integral Equations},  Cambridge University Press,   Cambridge, 2010.

\bibitem{ZM}
Z. Mghazli,
{\it Regularity of an elliptic problem with mixed Dirichlet-Robin boundary conditions in a polygonal domain}, Calcolo, {\textbf 29}(1992), 241–267. 


\bibitem{PSV2017}
L. P\"{a}iv\"{a}rinta, M. Salo, and  E.V. Vesalainen, {\it  Strictly convex corners scatter}, Rev. Mat. Iberoamericana, {\textbf33}(2017), 1369–1396.


\bibitem{PS21}
M. Salo and H. Shahgholian, {\it Free boundary methods and non-scattering phenomena}, Res. Math. Sci., 8 (2021), Paper No. 58, 19 pp.


\end{thebibliography}
\end{document}